\theoremstyle{plain}
\newtheorem*{theorem*}{Theorem}
\newtheorem{theorem}{Theorem}[section] 
\newtheorem{lemma}[theorem]{Lemma}
\newtheorem{proposition}[theorem]{Proposition}
\newtheorem{corollary}[theorem]{Corollary}
\newtheorem{conjecture}[theorem]{Conjecture}
\theoremstyle{definition}
\newtheorem{definition}[theorem]{Definition}
\newtheorem{remark}[theorem]{Remark}
\numberwithin{equation}{section}
\DeclareMathOperator{\im}{Im}
\DeclareMathOperator{\Wr}{Wr}
\title{Exceptional Laguerre polynomials}
\author[1]{Niels Bonneux}
\author[2]{Arno B.J. Kuijlaars}
\affil[1,2]{Katholieke Universiteit Leuven, Department of Mathematics, 

Celestijnenlaan~200B box 2400, 3001 Leuven, Belgium. 

E-mail:~{\tt niels.bonneux@kuleuven.be} 

{\tt arno.kuijlaars@kuleuven.be}
}
\date{\today}                     %% if you don't need date to appear
\begin{document}

\maketitle

%%%%%%%%%%%%%%%%%%%%%%%%%%%%%%%%%%%%%%%%%%%%%
%%%%%%%%%%%%%%%% Abstract %%%%%%%%%%%%%%%%%%%
%%%%%%%%%%%%%%%%%%%%%%%%%%%%%%%%%%%%%%%%%%%%%

\begin{abstract}
The aim of this paper is to present the construction of exceptional
Laguerre polynomials in a systematic way, and to provide new 
asymptotic results on the location of the zeros.
To describe the exceptional Laguerre polynomials we associate them with
two partitions. We find that the use of partitions is an elegant way to 
express these polynomials 
and we restate some of their known properties in terms of partitions. 
We discuss the asymptotic behavior of the regular zeros and the 
exceptional zeros of exceptional Laguerre polynomials as the degree 
tends to infinity.
\end{abstract}

%%%%%%%%%%%%%%%%%%%%%%%%%%%%%%%%%%%%%%%%%%%%%%%
%%%%%%%%%%%%%%%% Introduction %%%%%%%%%%%%%%%%%
%%%%%%%%%%%%%%%%%%%%%%%%%%%%%%%%%%%%%%%%%%%%%%%

\section{Introduction}\label{sec:introduction}
Laguerre polynomials are one of the three classes of classical orthogonal polynomials
\cite{Szego}, next to Hermite and Jacobi polynomials. These classical orthogonal
polynomials are very well understood. In the past few years, Laguerre polynomials are extended 
to exceptional Laguerre polynomials
\cite{Duran,GomezUllate_Kamran_Milson-09,GomezUllate_Kamran_Milson-10,
GomezUllate_Kamran_Milson-12,Ho_Odake_Sasaki,Liaw_etal,Odake_Sasaki-10,
Odake_Sasaki-09,Sasaki_Tsujimoto_Zhedanov}, also 
sometimes called multi-indexed Laguerre polynomials 
\cite{Grandati_Quesne,Odake-a,Odake-b,Odake_Sasaki-a,Odake_Sasaki-b,Sasaki_Takemura}. 
The remarkable new feature of this generalization is that the exceptional Laguerre
polynomial does not exist for every degree, as was first discovered by
G\'omez-Ullate, Kamran and Milson \cite{GomezUllate_Kamran_Milson-09}.  
Because of this unusual property, these polynomials were called exceptional. 
Remarkably, the exceptional Laguerre 
polynomials may still form a complete orthogonal system \cite{Duran,Duran_Perez}. 
The orthogonality condition only holds for the admissible degrees and 
they are complete in their natural Hilbert space setting. 

Besides exceptional Laguerre polynomials, there are also exceptional Hermite and
exceptional Jacobi polynomials. Recently, Garc\'ia-Ferrero, G\'omez-Ullate and 
Milson classified exceptional orthogonal polynomials \cite{GarciaFerrero_GomezUllate_Milson} and 
it turns out that every system of exceptional orthogonal polynomials is related 
to one of the classical orthogonal polynomials by a sequence of Darboux 
transformations \cite{Crum,Darboux}. Hence, there are only three 
families: exceptional Hermite, exceptional Laguerre and exceptional Jacobi polynomials.
These exceptional polynomial systems appear in quantum mechanical problems as solutions 
in exactly solvable models \cite{Ho,Quesne} or in superintegrable systems \cite{Marquette_Quesne}. 

From a mathematical point of view, there are two main questions. First of all, 
there is the question about the classification of these polynomials. 
This issue is basically solved in \cite{GarciaFerrero_GomezUllate_Milson}. 
Secondly, there is the question about the properties of the exceptional orthogonal
polynomials. In particular, one could wonder whether these polynomials have 
similar properties as their classical counterparts. A noticeable difference is
that orthogonal polynomials only have real zeros, while the exceptional orthogonal
polynomials can have non-real zeros.

The purpose of this paper is twofold. We give an overview of
the construction and classification of exceptional Laguerre polynomials using
the concept of partitions. Next we derive asymptotic results
about the zeros of these polynomials. In many cases, the exceptional Laguerre 
polynomials are a complete set of eigenpolynomials in an appropriate Hilbert 
space where boundary conditions have to be taken into account. Although this is
an important issue, we do not consider it in this paper.

We construct the exceptional Laguerre polynomial via two 
partitions \cite{Andrews} as we feel that this is the most
natural setting for studying these polynomials. 
Such a construction by partitions is already used for exceptional Hermite polynomials, 
see for example \cite{GomezUllate_Grandati_Milson-b,Kuijlaars_Milson}.
In the Laguerre case, we show how a pair of partitions 
can be used to construct exceptional Laguerre polynomials
and we give some of their properties. The properties are not new
as they can be found in various places in the literature with varying notation. 
We find it useful to present it here in a systematic way.
In some of the existing literature, the exceptional Laguerre polynomials are 
classified as type I, type II and type III $X_m$-Laguerre polynomials, see e.g.\ \cite{Ho_Sasaki,Liaw_etal}. This classification captures only very specific 
cases for the exceptional Laguerre polynomial. We show how to rewrite
the $X_m$-Laguerre polynomials in our approach using partitions, see
Section \ref{subsec:Xm}.

The second aim of the paper is to address the question about the asymptotic behavior 
of the zeros  of the exceptional Laguerre polynomial. This asymptotic behavior 
is studied in the Hermite case \cite{Kuijlaars_Milson} where the corresponding partition 
is chosen to be even. We use similar ideas in this paper to obtain the new results for 
the Laguerre case. Our results partially answer the conjecture made in
\cite{Kuijlaars_Milson} that their result in the Hermite case should hold 
in the Laguerre and Jacobi case as well. The behavior of the zeros 
of some exceptional Laguerre polynomials is already studied in the papers 
\cite{GomezUllate_Marcellan_Milson,Ho_Sasaki,Horvath}. In  these papers, 
only $1$-step Darboux  transformations are considered whereas we deal with 
general $r$-step transformations, thereby covering all possible cases.  

The zeros are divided into two classes according to their location. The zeros 
which lie in the orthogonality region are called the regular zeros while the 
others are called the exceptional zeros. Our first result is a lower bound 
for the number of regular zeros. Next, we prove a generalization of the 
Mehler-Heine theorem of Laguerre polynomials. We also show that the counting 
measure of the regular zeros, suitably normalized, converges to the Marchenko-Pastur
distribution. This distribution is also the limiting distribution of the scaled 
zeros of classical Laguerre polynomials. Finally, we prove that each simple zero 
of the generalized Laguerre polynomial attracts an exceptional zero of the 
exceptional Laguerre polynomial. The condition that we need simple zeros 
is probably not a restriction. In the Hermite case, it was conjectured 
by Veselov \cite{Felder_Hemery_Veselov} that the zeros of a Wronskian of a fixed set 
of Hermite polynomials are indeed simple, except possibly at the origin. 
Likewise, we conjecture that the zeros of the generalized Laguerre polynomial 
are simple too. An important difference with Veselov's conjecture is that we 
add a condition in our statement. The zeros are not simple in full generality as we 
have explicit examples where non-simple zeros are obtained.

We organize this paper as follows. In Section \ref{sec:XLP} we introduce the 
generalized and exceptional Laguerre polynomial by associating them with two 
partitions. After defining them properly, we give an alternative proof to derive 
the degree and leading coefficient of these polynomials in Section \ref{sec:DegLcoeff}. 
In Section \ref{sec:ConstructionGLP} we go on to discuss the most general 
construction of generalized Laguerre polynomials. The most general construction
involves four types of eigenfunctions of the Laguerre differential operator,
which could be captured by four partitions. However, there is a procedure
with Maya diagrams to reduce it to only two types. 
We follow \cite{GomezUllate_Grandati_Milson-a} who worked out the reduction procedure
for the Hermite case.
The construction of the exceptional Laguerre polynomials in the most general case 
is treated in Section \ref{sec:ConstructionXLP}. 
In this section we also give the relation between our approach using 
partitions and $X_m$-Laguerre polynomials. 

Finally, we state and prove the new results about the zeros of the 
exceptional Laguerre polynomial in \ref{sec:Zeros-Results} and \ref{sec:Zeros-Proofs}. 
These sections are divided into the results according the regular and exceptional zeros. 
We also state a conjecture of simple zeros in \ref{sec:Zeros-Results} and give some 
remarks about it.

%%%%%%%%%%%%%%%%%%%%%%%%%%%%%%%%%%%%%%%%%%%%%%%%%%%%%%%%%%%%%
%%%%%%%%%%%%%%%%% Exceptional Polynomials %%%%%%%%%%%%%%%%%%%
%%%%%%%%%%%%%%%%%%%%%%%%%%%%%%%%%%%%%%%%%%%%%%%%%%%%%%%%%%%%%

\section{Exceptional Laguerre polynomials}\label{sec:XLP}
We start from Laguerre polynomials to define generalized and exceptional Laguerre
polynomials where we introduce them by use of partitions. After the necessary 
definitions, we state a few known results for these polynomials and give an 
elementary duality property for the partitions. Most of the results of this section 
can be found in \cite{Curbera_Duran,Duran}. 

\subsection{Laguerre polynomials}\label{sec:Laguerre}
The Laguerre polynomial of degree $n$ with parameter $\alpha\in\mathbb{R}$ is given 
by the Rodrigues formula
\begin{equation} \label{eq:LagRod}
	L^{(\alpha)}_n(x)
		= \frac{e^x x^{-\alpha}}{n!} \frac{d^n}{dx^n} \left(e^{-x}x^{n+\alpha}\right).
\end{equation}
For $\alpha > -1$ they are orthogonal polynomials on $[0,\infty)$ with respect
to the positive weight function $x^{\alpha} e^{-x}$,
\begin{equation*}
	\int_{0}^{\infty} L^{(\alpha)}_{n}(x) L^{(\alpha)}_{m}(x) x^{\alpha} e^{-x} dx = 0,
	\qquad \text{for } n \neq m.
\end{equation*}
The Laguerre polynomial $L^{(\alpha)}_n$ is an eigenfunction of the differential operator
\begin{equation}\label{eq:LagDV1}
	y \mapsto xy'' + (\alpha+1-x) y'.
\end{equation}
There are other eigenfunctions of this operator which have a polynomial part and
these are listed in Table \ref{tab:1}. We will use these eigenfunctions to define 
the generalized and exceptional Laguerre polynomial. As we see later, the first 
two types of eigenfunctions are the most relevant ones for us.
\begin{table}[h]
	\begin{center}
		\begin{tabular}{|l | c|}
			\hline
			Eigenfunction & Eigenvalue \\
			\hline
			$L^{(\alpha)}_n(x)$ &  $-n$  \\
			$e^{x} L^{(\alpha)}_n(-x)$ & $n+1+ \alpha$ 	 \\
			$x^{-\alpha} L_n^{(-\alpha)}(x)$ & $-n +\alpha$ \\
			$x^{-\alpha} e^x L_n^{(-\alpha)}(-x)$ & $n+1$ \\	
			\hline
		\end{tabular}
		\caption{Eigenfunctions and eigenvalues of \eqref{eq:LagDV1}.} 
		\label{tab:1}
	\end{center}
\end{table}

We can transform the operator \eqref{eq:LagDV1} into
\begin{equation} \label{eq:LagDV2}
	y \mapsto - y'' +  \left( x^2 + \frac{4\alpha^2-1}{4x^2} \right) y,
\end{equation}	
in the sense that if $y$ is an eigenfunction of \eqref{eq:LagDV1} then
$x^{\alpha+\frac{1}{2}} e^{-\frac{1}{2} x^2} y(x^2)$ is an eigenfunction
of \eqref{eq:LagDV2}.
The corresponding eigenfunctions are given in Table \ref{tab:2} 
and we denote them by $\varphi_n^{(\alpha)}$ and $\psi_n^{(\alpha)}$. 
The Laguerre polynomial $L_n^{(\alpha)}$ itself transforms into
\begin{equation}\label{eq:LagDV3}
	\varphi_n^{(\alpha)}(x) = x^{\alpha+\frac{1}{2}} e^{-\frac{1}{2} x^2} 
	L_n^{(\alpha)}(x^2).
\end{equation}
From $\varphi_n^{(\alpha)}$ we obtain the eigenfunction $\psi_n^{(\alpha)}$ 
of \eqref{eq:LagDV2}  by changing $x$ to $ix$ in \eqref{eq:LagDV3}. 
Two more eigenfunctions are obtained by changing $\alpha$ to $-\alpha$.
We arrive at four sets of eigenfunctions with a polynomial part as shown in 
Table \ref{tab:2}. The operator \eqref{eq:LagDV3} is in the Schr\"odinger
form and it is convenient to apply a Darboux-Crum 
(or a sequence of Darboux) transformation(s) to this form of 
the operator \cite{Crum,Darboux}. 
%This perspective is used in Section \ref{sec:ProofExceptionalZeros}. 
\begin{table}[h]
	\begin{center}
	\begin{tabular}{|l | c|}
		\hline
		Eigenfunction & Eigenvalue \\
		\hline
		$\varphi_n^{(\alpha)}(x) = e^{-\frac{1}{2}x^2}x^{\alpha+\frac{1}{2}} L^{(\alpha)}_n(x^2)$ & $4n+2(1+\alpha)$	 \\
		$\psi_n^{(\alpha)}(x)=e^{\frac{1}{2}x^2}x^{\alpha+\frac{1}{2}} L^{(\alpha)}_n(-x^2)$ & $-4n-2(1+\alpha)$	 \\
		$\varphi_n^{(-\alpha)}(x) = e^{-\frac{1}{2}x^2}x^{-\alpha+\frac{1}{2}} L^{(-\alpha)}_n(x^2)$ & $4n+2(1-\alpha)$ \\
		$\psi_n^{(-\alpha)}(x)=e^{\frac{1}{2}x^2}x^{-\alpha+\frac{1}{2}} L^{(-\alpha)}_n(-x^2)$ & $-4n-2(1-\alpha)$ \\	
		\hline
	\end{tabular}
	\caption{Eigenfunctions and eigenvalues of \eqref{eq:LagDV2}.} 
	\label{tab:2}
	\end{center}
\end{table}

\subsection{Generalized Laguerre polynomials}
We will now define the generalized Laguerre polynomial. Let us however point out
that the term generalized Laguerre polynomial is also used for the polynomial
\eqref{eq:LagRod} with general parameter $\alpha$, to distinguish it from the 
case $\alpha = 0$
(which is then called the Laguerre polynomial). We use generalized Laguerre
polynomial in analogy with generalized Hermite polynomial as for example in 
\cite{Kuijlaars_Milson}.

For us, the  generalized Laguerre polynomial is basically a Wronskian of 
the eigenfunctions in Table \ref{tab:1} with an appropriate prefactor to make
it a polynomial. A Wronskian of a set of sufficiently
differentiable functions $f_1,\dots,f_r$ is the determinant of the matrix 
$\left( \frac{d^{j-1}}{dx^{j-1}}f_i \right)_{1\leq i,j \leq r}$. We denote it by 
\begin{equation*}
	\Wr[f_1,\dots,f_r] =  
	\det \left( \frac{d^{j-1}}{dx^{j-1}}f_i \right)_{1\leq i,j \leq r}. 
\end{equation*}
A generalized Laguerre polynomial is a Wronskian of $r$ Laguerre polynomials with 
the same parameter $\alpha$, but with distinct degrees 
$n_1 > n_2 > \cdots > n_r \geq 1$. We label the generalized Laguerre polynomial 
by the partition $\lambda = (\lambda_1 \geq \cdots \geq \lambda_r)$ where 
$\lambda_j = n_j - r + j$ for $j = 1, \ldots, r$. Then
\begin{equation}\label{eq:OmegaLa}
	\Omega_{\lambda}^{(\alpha)} := \Wr \left[ L^{(\alpha)}_{n_1}, \ldots, L^{(\alpha)}_{n_r} \right]
\end{equation}
is a polynomial of degree $|\lambda| = \sum_{j=1}^r \lambda_j$. The fact that $|\lambda|$ is the highest degree can be easily seen by looking at the diagonal or 
anti-diagonal. The coefficient to $x^{|\lambda|}$ is non-zero because all 
the positive integers $n_j$ are different. See Lemma \ref{lem:GLP1} 
for a precise formula of the leading coefficient. Note that we follow the convention that
a partition is a weakly decreasing sequence of positive integers, which
is the common one in number theory and combinatorics.

A more general approach is possible if we include other eigenfunctions of the 
operator \eqref{eq:LagDV1}. Since we want to end up with polynomials we only 
include the eigenfunctions from \mbox{Table \ref{tab:1}}. In general we could include 
all four types, but it turns out that it suffices to include only eigenfunctions of 
the first two types, see Section \ref{sec:ConstructionGLP}. We consider two 
partitions $\lambda$ and $\mu$ of lengths $r_1$ and $r_2$ with corresponding 
degree sequences $(n_j)_{j=1}^{r_1}$ and $(m_j)_{j=1}^{r_2}$,
\begin{equation} \label{eq:lambdamu} 
\begin{aligned} 
	\lambda & = (\lambda_1, \ldots, \lambda_{r_1}), 
	& \qquad n_j = \lambda_j + r_1 - j, \ \qquad  j=1, \ldots, r_1, \\
	\mu & = (\mu_1, \ldots, \mu_{r_2}), 
	& \qquad m_j = \mu_j + r_2 - j,  \qquad j=1, \ldots, r_2.
\end{aligned}
\end{equation} 
We write $r=  r_1+r_2$ and
\begin{equation} \label{eq:OmegaLaMu}
	\Omega^{(\alpha)}_{\lambda, \mu} :=  e^{-r_2 x} \cdot
	\Wr\left[f_1, \ldots, f_r \right], 
\end{equation}
where 
\begin{align} 
	f_j(x) & = L^{(\alpha)}_{n_j}(x),  \qquad \qquad j=1, \ldots, r_1,  
	\label{eq:fj1} \\
	f_{r_1+j}(x) & = e^x L^{(\alpha)}_{m_j}(-x), \qquad \ j = 1, \ldots, r_2.
	\label{eq:fj2}
\end{align}
The prefactor $e^{-r_2 x}$ in \eqref{eq:OmegaLaMu} guarantees that 
$\Omega^{(\alpha)}_{\lambda, \mu}$ is a polynomial. We see 
that $\Omega^{(\alpha)}_{\lambda, \emptyset} =\Omega^{(\alpha)}_{\lambda}$.

\begin{definition}\label{def:GLP}
The polynomial $\Omega^{(\alpha)}_{\lambda, \mu}$ defined in \eqref{eq:OmegaLaMu} 
is called the generalized Laguerre polynomial of parameter $\alpha$ associated with partitions $\lambda$ and $\mu$.
\end{definition}
When both partitions are empty, the generalized Laguerre polynomial is 
the constant function $1$. 

\begin{remark}\label{remark:LinInd}
As the Laguerre polynomial, the generalized Laguerre polynomial is defined for 
every $\alpha\in\mathbb{R}$ and the polynomial never vanishes identically,
since the functions $f_1, \ldots, f_r$ from \eqref{eq:fj1}-\eqref{eq:fj2}
are always linearly independent. 
As we can see from Table \ref{tab:1} combined with our definition of 
$\Omega^{(\alpha)}_{\lambda, \mu}$, it is possible that for a particular 
choice of $\alpha$, namely $\alpha=-n_i-m_j-1$ for some $i$ and $j$, 
two eigenvalues coincide. Nevertheless, the eigenfunctions are still linearly
independent as one of them has an exponential part while the other one has not.  
\end{remark}

If we apply a few elementary properties of Laguerre polynomials (which are stated further, see \eqref{app:eq:diff1} and \eqref{app:eq:diff2}), we can express the generalized Laguerre polynomial \eqref{eq:OmegaLaMu} as
\begin{equation} \label{eq:OmegaLaMu2}
	\begin{vmatrix}
		L^{(\alpha)}_{n_1}(x) &\dots &L^{(\alpha)}_{n_{r_1}}(x) & L^{(\alpha)}_{m_1}(-x) & \dots & L^{(\alpha)}_{m_{r_2}}(-x)
		 \\[10pt]
		(-1) L^{(\alpha+1)}_{n_1-1}(x) &\dots & (-1) L^{(\alpha+1)}_{n_{r_1}-1}(x) & L^{(\alpha+1)}_{m_1}(-x) & \dots & L^{(\alpha+1)}_{m_{r_2}}(-x) \\[10pt]
		\vdots &\ddots &\vdots & \vdots &\ddots & \vdots \\[10pt]
		(-1)^{r-1}L^{(\alpha+r-1)}_{n_1-r+1}(x) &\dots &(-1)^{r-1}L^{(\alpha+r-1)}_{n_{r_1}-r+1}(x) & L^{(\alpha+r-1)}_{m_1}(-x) &\dots & L^{(\alpha+r-1)}_{m_{r_2}}(-x)
	\end{vmatrix},
\end{equation}
where we set $L^{(\alpha)}_{n}(x)\equiv 0$ whenever $n \leq -1$. The expression \eqref{eq:OmegaLaMu2} shows that our definition of $\Omega^{(\alpha)}_{\lambda, \mu}$ is the same as Dur\'an's definition \cite[Formula (1.8)]{Duran}. Using this expression, one can easily see that the degree of $\Omega^{(\alpha)}_{\lambda, \mu}$ is 
at most $|\lambda|+\sum_{i=1}^{r_2} m_i$. However in this situation there is 
a fair amount of cancellation and we have the following result from Dur\'an 
\cite[Section 5]{Duran}.

\begin{lemma} \label{lem:GLP1}
Take $\alpha\in\mathbb{R}$. Then $\Omega^{(\alpha)}_{\lambda, \mu}$ is a polynomial of 
degree $|\lambda| + |\mu|$ with leading coefficient given by
\begin{equation*}
	(-1)^{\sum\limits_{i=1}^{r_1}n_i} \cdot \frac{\prod\limits_{1\leq i<j\leq r_1}(n_j-n_i) 
	\prod\limits_{1\leq i<j\leq r_2}(m_j-m_i)}{\prod\limits_{i=1}^{r_1}n_i! 
	\prod\limits_{i=1}^{r_2}m_i!},
\end{equation*}
which is independent of the parameter $\alpha$.
\end{lemma}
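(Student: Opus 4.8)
The plan is to work with the Wronskian form \eqref{eq:OmegaLaMu} and to isolate the highest-degree part of $\Omega^{(\alpha)}_{\lambda,\mu}$ by replacing each function $f_j$ from \eqref{eq:fj1}--\eqref{eq:fj2} by its leading monomial. Since $L^{(\beta)}_n(x)=\tfrac{(-1)^n}{n!}x^n+\cdots$, the leading monomial of $f_i$ is $c_i x^{n_i}$ with $c_i=\tfrac{(-1)^{n_i}}{n_i!}$ for $i\le r_1$, and that of $f_{r_1+j}$ is $e^x d_j x^{m_j}$ with $d_j=\tfrac{1}{m_j!}$. Because these leading coefficients do not involve $\alpha$, the $\alpha$-independence asserted in the lemma will be automatic. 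By multilinearity of the Wronskian in its arguments, $e^{-r_2x}\Wr[f_1,\dots,f_r]$ splits into the ``all-leading-monomial'' term plus terms in which at least one $f$ is replaced by a strictly lower-degree monomial, so the first task is to show that only the all-leading-monomial term can reach the top degree.

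The crucial estimate is a refined degree bound for pure exponential-monomial Wronskians: for nonnegative integers $a_1,\dots,a_{r_1}$ and $b_1,\dots,b_{r_2}$,
\[
 \deg\Bigl(e^{-r_2x}\Wr\bigl[x^{a_1},\dots,x^{a_{r_1}},\,e^xx^{b_1},\dots,e^xx^{b_{r_2}}\bigr]\Bigr)\ \le\ \sum_i a_i+\sum_j b_j-\binom{r_1}{2}-\binom{r_2}{2},
\]
with equality if and only if the $a_i$ are distinct and the $b_j$ are distinct. I expect this to be the main obstacle. The subtraction of $\binom{r_1}{2}$ is the usual loss from differentiating the polynomial columns, but the extra $\binom{r_2}{2}$ reflects a genuine cancellation: in each column coming from $e^xx^{b_j}$ the top coefficient $e^x x^{b_j}$ is the same in every row of the Wronskian matrix, so the naive top degree is killed and one must descend $\binom{r_2}{2}$ levels before a nonvanishing contribution appears. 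Granting this bound, every term in the multilinear expansion that uses a lowered monomial has strictly smaller degree than the all-leading-monomial term, whose degree equals $\sum_i n_i+\sum_j m_j-\binom{r_1}{2}-\binom{r_2}{2}=|\lambda|+|\mu|$; hence $\deg\Omega^{(\alpha)}_{\lambda,\mu}=|\lambda|+|\mu|$ and its leading coefficient is that of the all-leading-monomial Wronskian.

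It then remains to evaluate that leading coefficient. Using $\tfrac{d^k}{dx^k}x^n=\tfrac{n!}{(n-k)!}x^{n-k}$ and $\tfrac{d^k}{dx^k}(e^xx^m)=e^x\sum_{l=0}^k\binom{k}{l}\tfrac{m!}{(m-l)!}x^{m-l}$, I would factor $e^{r_2x}$ out of the $\mu$-columns and read off the coefficient of $x^{|\lambda|+|\mu|}$. The extremal configuration assigns rows $0,\dots,r_1-1$ to the $\lambda$-columns and rows $r_1,\dots,r-1$ to the $\mu$-columns, so the coefficient factorizes into two blocks. The $\lambda$-block is the falling-factorial determinant $\det\left[\tfrac{n_i!}{(n_i-k)!}\right]=\prod_{1\le i<j\le r_1}(n_j-n_i)$, and the $\mu$-block, after summing over the permutation of the powers $0,1,\dots,r_2-1$ dropped from its columns, becomes $\det\left[\binom{k}{l}\right]$ over $k=r_1,\dots,r-1$ and $l=0,\dots,r_2-1$, which equals $1$ for consecutive rows, times $\det\left[\tfrac{m_j!}{(m_j-l)!}\right]=\prod_{1\le i<j\le r_2}(m_j-m_i)$. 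Reinstating the constants $\prod_i c_i=(-1)^{\sum_i n_i}/\prod_i n_i!$ and $\prod_j d_j=1/\prod_j m_j!$ then yields exactly the stated leading coefficient, which is nonzero precisely because the $n_i$ and the $m_j$ are distinct.

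Finally, I would note that the same bookkeeping can instead be carried out on Dur\'an's determinant \eqref{eq:OmegaLaMu2}, where the cancellation is already visible: in the columns built from $L^{(\alpha+k)}_{m_j}(-x)$ the coefficient of $x^{m_j-p}$ is $\binom{m_j+\alpha+k}{p}/(m_j-p)!$, a polynomial of degree $p$ in the row index $k$, and requiring these to be linearly independent across the $r_2$ columns forces the dropped powers to be a permutation of $0,\dots,r_2-1$. This reproduces both the loss of $\binom{r_2}{2}$ in the degree and the Vandermonde factor $\prod_{i<j}(m_j-m_i)$, and it makes the $\alpha$-independence transparent, since only the top $k$-degree coefficients survive.
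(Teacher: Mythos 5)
Your proposal is correct, but it reaches Lemma \ref{lem:GLP1} by a genuinely different route than the paper. The paper deduces the lemma from Proposition \ref{prop:degwr}, which it proves by induction on $N=\sum_i\deg R_i-\binom{r_1}{2}-\binom{r_2}{2}$: the derivative identity \eqref{eq:Omegaprime} expresses $\Omega'$ as a sum of $r$ lower-complexity $\Omega$'s, and the partial-fraction identity of Lemma \ref{lem:lcoeff} is exactly what is needed to recombine the $r$ inductively known leading coefficients into $N$ times the Vandermonde product, after which one integrates. You instead expand by multilinearity into Wronskians of pure monomials $x^{a_i}$ and $e^xx^{b_j}$, prove the refined degree bound with the loss of $\binom{r_1}{2}+\binom{r_2}{2}$ directly, and evaluate the extremal term by a block Laplace expansion into two falling-factorial (Vandermonde) determinants $\prod_{i<j}(n_j-n_i)$ and $\prod_{i<j}(m_j-m_i)$ times $\det[\binom{k}{l}]=1$ for consecutive rows. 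Your key mechanism is sound: decomposing each exponential column over the vectors $u_l=(\binom{k}{l})_k$, which have degree $l$ in the row index, forces the dropped powers in the $\mu$-block to be a permutation of $0,\dots,r_2-1$, which accounts for the extra $\binom{r_2}{2}$; and the bound trivially holds for repeated exponents since the monomial Wronskian then vanishes, so every non-leading term in the multilinear expansion is strictly subdominant (note $\sum n_i+\sum m_j-\binom{r_1}{2}-\binom{r_2}{2}=|\lambda|+|\mu|$ as required). What each approach buys: yours is more elementary and self-contained, avoids both the induction and Lemma \ref{lem:lcoeff}, and makes the $\alpha$-independence and the provenance of the two Vandermonde factors completely transparent; the paper's inductive differentiation apparatus is set up once and then reused essentially verbatim to prove Proposition \ref{prop:degdet} (the degree bound for determinants with general derivative orders $l_1<\dots<l_r$), which is needed later in Lemma \ref{lem:XLPLinCom}, whereas your explicit expansion would have to be redone or adapted for that case. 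The only items you should write out in full are the validity of the degree bound for arbitrary exponent choices and the sign of the block Laplace expansion (which is $+1$ for the diagonal arrangement), but neither is a gap.
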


Actually, Dur\'an \cite{Duran} first considers exceptional  Meixner polynomials and 
obtains Wronskians of Laguerre polynomials as limits of Casorati determinants of 
Meixner polynomials. The degree  statement for these latter determinants follows 
from a quite general result of Dur\'an and de la Iglesia  \cite[Lemma 3.4]{Duran_Iglesia}.
In Section \ref{sec:DegLcoeff} we give a direct argument of the degree reduction.
Therefore Lemma \ref{lem:GLP1} is the Laguerre case of Proposition \ref{prop:degwr} 
where we use that the leading coefficient of the Laguerre polynomial $L^{(\alpha)}_{n}(x)$ is given by $\frac{(-1)^n}{n!}$. \\

The value at the origin of the generalized Laguerre polynomial 
$\Omega^{(\alpha)}_{\lambda, \mu}$ can be computed explicitly. Dur\'an showed 
that this value is non-zero when the parameter $\alpha$ is not a negative 
integer \cite[Lemma 5.1]{Duran}. By investigating this lemma of Dur\'an, 
we easily verify that the condition can be weakened.

\begin{lemma}\label{lem:GLP2}
Take $\alpha\in\mathbb{R}$ such that the following conditions are satisfied,
\begin{align*}
	\alpha &\neq -1,-2,\dots, -\max\{n_1, m_1\}, &&\\
	\alpha & \neq -n_i-m_j-1, &&i=1,\dots,r_1 \text{ and } j=1,\dots,r_2.
\end{align*}
Then $\Omega^{(\alpha)}_{\lambda, \mu}(0) \neq 0 $.
\end{lemma}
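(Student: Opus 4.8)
The plan is to evaluate $\Omega^{(\alpha)}_{\lambda,\mu}$ at the origin using the determinantal form \eqref{eq:OmegaLaMu2}. Setting $x=0$ and inserting the classical value $L^{(\beta)}_n(0)=\binom{n+\beta}{n}=\frac{(\beta+1)_n}{n!}$ turns $\Omega^{(\alpha)}_{\lambda,\mu}(0)$ into an explicit $r\times r$ determinant whose entries are polynomials in $\alpha$: in the $i$-th $\lambda$-column the row-$k$ entry becomes $(-1)^{k-1}\binom{n_i+\alpha}{n_i-k+1}$, so that the upper index $n_i+\alpha$ is the same in every row, while in the $j$-th $\mu$-column the row-$k$ entry becomes $\binom{m_j+\alpha+k-1}{m_j}$. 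Consequently $\alpha\mapsto\Omega^{(\alpha)}_{\lambda,\mu}(0)$ is a polynomial in $\alpha$, and the whole question reduces to locating its zeros.

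Next I would invoke Dur\'an's explicit evaluation of this determinant \cite[Lemma 5.1]{Duran}, which writes $\Omega^{(\alpha)}_{\lambda,\mu}(0)$ as a nonzero $\alpha$-independent constant times a product of linear factors in $\alpha$. For a self-contained derivation, the product can be obtained by factoring a common Pochhammer symbol out of each column and reducing the remaining determinant to a Cauchy/Vandermonde-type evaluation; this is already transparent in the case $r_1=r_2=1$, where a short computation gives $\Omega^{(\alpha)}_{\lambda,\mu}(0)=\frac{1}{n_1!\,m_1!}(\alpha+1)(\alpha+2)\cdots(\alpha+n_1)\cdot(\alpha+2)\cdots(\alpha+m_1)\cdot(\alpha+n_1+m_1+1)$. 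Once this factored form is in hand, the zeros of $\Omega^{(\alpha)}_{\lambda,\mu}(0)$ can be read off directly.

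It then remains to check that the zero set of the product is exactly $\{-1,-2,\dots,-\max\{n_1,m_1\}\}\cup\{-n_i-m_j-1:1\le i\le r_1,\ 1\le j\le r_2\}$. The first set arises from merging the two Pochhammer ranges contributed by the $\lambda$- and $\mu$-columns, exactly as the factors $(\alpha+1)\cdots(\alpha+n_1)$ and $(\alpha+2)\cdots(\alpha+m_1)$ do in the sample computation, whose union of roots is $\{-1,\dots,-\max\{n_1,m_1\}\}$; the resonance factors $(\alpha+n_i+m_j+1)$ account for the second set, and these are precisely the parameters flagged in Remark \ref{remark:LinInd}. Since the overall constant is independent of $\alpha$ and nonzero, $\Omega^{(\alpha)}_{\lambda,\mu}(0)$ vanishes only at these values, which proves the lemma and simultaneously shows that Dur\'an's hypothesis (``$\alpha$ not a negative integer'') can be weakened to the two stated conditions.

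The hard part will be the bookkeeping in the general $(r_1,r_2)$ case: establishing the exact factored form, and in particular proving that the Pochhammer products from the two blocks combine so that their roots telescope to $\{-1,\dots,-\max\{n_1,m_1\}\}$ rather than to a longer range, while no spurious zeros are introduced by the column reduction of the determinant. Verifying that the product has zeros \emph{only} at the listed values, and not at the other negative integers, is the entire content of the strengthening over Dur\'an's statement, so this step must be carried out by an exact identification of the factors rather than by a soft nonvanishing argument.
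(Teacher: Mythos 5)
Your overall strategy --- evaluate the determinant \eqref{eq:OmegaLaMu2} at $x=0$, recognize $\alpha\mapsto\Omega^{(\alpha)}_{\lambda,\mu}(0)$ as an explicit polynomial via Dur\'an's Lemma 5.1, and locate its zeros --- is exactly what the paper intends; the authors give no proof beyond the remark that ``by investigating this lemma of Dur\'an, we easily verify that the condition can be weakened,'' and your entries at $x=0$ are correct. The problem is in the target you set for the final step.

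You propose to show that the zero set of $\alpha\mapsto\Omega^{(\alpha)}_{\lambda,\mu}(0)$ is \emph{exactly} $\{-1,\dots,-\max\{n_1,m_1\}\}\cup\{-n_i-m_j-1\}$, and you call this identification ``the entire content'' of the argument. That exactness claim is false, and the paper says so immediately after the statement (``The statement of Lemma \ref{lem:GLP2} is not best possible''). Concretely, for $\lambda=(1,1)$, $\mu=(1)$ (so $n_1=2$, $n_2=1$, $m_1=1$) the determinant \eqref{eq:OmegaLaMu2} at the origin evaluates to
\[
\Omega^{(\alpha)}_{(1,1),(1)}(0)=\tfrac{1}{2}\,(\alpha+1)(\alpha+3)(\alpha+4),
\]
which is equal to $-1$ at $\alpha=-2$, even though $-2$ lies in the excluded set $\{-1,-2\}\cup\{-3,-4\}$ of the lemma. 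So the lemma only asserts (and only needs) the inclusion ``zero set $\subseteq$ listed set''; if you aim at equality, the ``hard part'' you describe cannot be completed. Relatedly, your proposed self-contained derivation is too optimistic in the mixed case: the $\lambda$- and $\mu$-columns do not share common Pochhammer factors that peel off to leave a Cauchy/Vandermonde determinant --- the interaction between the two blocks is precisely what produces the resonance factors $(\alpha+n_i+m_j+1)$ \emph{and} the cancellations of some negative-integer roots seen in the example above. Working from Dur\'an's Lemma 5.1 as a black box (a ratio of Gamma-type products), the correct remaining task is to check that every $\alpha$ at which that expression can vanish belongs to the stated list, i.e.\ containment; identifying the exact zero set is, as the paper notes, ``rather complicated'' and is not required.
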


The statement of Lemma \ref{lem:GLP2} is not best possible, but an explicit 
formulation of the values that are not allowed for $\alpha$ is rather complicated 
to write down, and we not pursue this in this paper. 
The condition $\Omega^{(\alpha)}_{\lambda, \mu}(0) \neq 0 $ will play a role 
in Corollary \ref{cor:XLPRegularZeros}.

The two partitions play a similar role in \eqref{eq:lambdamu} 
as is evident from the following  duality property. 
\begin{lemma} \label{lem:GLP4} 
For every $\alpha\in\mathbb{R}$ and partitions $\lambda$ and $\mu$, we have
\begin{equation} \label{lem:GLP4Formula}
	\Omega^{(\alpha)}_{\lambda, \mu}(x) =  (-1)^{\frac{r_1(r_1-1)}{2} + \frac{r_2(r_2-1)}{2}}
	 \Omega^{(\alpha)}_{\mu, \lambda}(-x).
\end{equation}
\end{lemma}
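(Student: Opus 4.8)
The plan is to reduce everything to three elementary properties of the Wronskian and then to track the resulting sign carefully. Write $r = r_1 + r_2$ and recall from \eqref{eq:OmegaLaMu}--\eqref{eq:fj2} that, since for $\Omega_{\mu,\lambda}$ the role of the first partition is played by $\mu$ (length $r_2$) and that of the second by $\lambda$ (length $r_1$),
\begin{equation*}
\Omega^{(\alpha)}_{\mu,\lambda}(x) = e^{-r_1 x}\,\Wr\!\left[L^{(\alpha)}_{m_1}(x),\dots,L^{(\alpha)}_{m_{r_2}}(x),\, e^{x}L^{(\alpha)}_{n_1}(-x),\dots,e^{x}L^{(\alpha)}_{n_{r_1}}(-x)\right].
\end{equation*}
The three facts I would use are: (i) replacing each entry $g_i(x)$ by $g_i(-x)$ multiplies the $j$-th column of the Wronskian matrix by $(-1)^{j-1}$ (by the chain rule), so that $\Wr[g_1(-\cdot),\dots,g_r(-\cdot)](x) = (-1)^{r(r-1)/2}\,\Wr[g_1,\dots,g_r](-x)$; (ii) a common scalar factor comes out as a power, $\Wr[h g_1,\dots,h g_r] = h^{r}\,\Wr[g_1,\dots,g_r]$; and (iii) interchanging the block of $r_2$ functions of one type with the block of $r_1$ functions of the other type is a permutation of the rows with sign $(-1)^{r_1 r_2}$.

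Starting from $\Omega^{(\alpha)}_{\mu,\lambda}(-x)$, I would first apply (i) to the substitution $x \mapsto -x$; this turns the entries into $L^{(\alpha)}_{m_j}(-x)$ and $e^{-x}L^{(\alpha)}_{n_j}(x)$ and produces the prefactor $e^{r_1 x}(-1)^{r(r-1)/2}$. Next I would multiply every column by $e^{x}$ via (ii): this clears the awkward factors $e^{-x}$ sitting on only part of the columns and converts the first $r_2$ entries into the second-type eigenfunctions $e^{x}L^{(\alpha)}_{m_j}(-x)$, at the cost of the scalar $e^{-rx}$. Finally I would use (iii) to move the $\lambda$-block in front of the $\mu$-block, so that the Wronskian becomes exactly the one appearing in $\Omega^{(\alpha)}_{\lambda,\mu}(x)$. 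Collecting the exponential factors gives $e^{r_1 x}\cdot e^{-rx} = e^{-r_2 x}$, which is precisely the prefactor in \eqref{eq:OmegaLaMu} for $\Omega^{(\alpha)}_{\lambda,\mu}$, so the exponentials match on the nose and one is left with
\begin{equation*}
\Omega^{(\alpha)}_{\mu,\lambda}(-x) = (-1)^{\frac{r(r-1)}{2}+r_1 r_2}\,\Omega^{(\alpha)}_{\lambda,\mu}(x).
\end{equation*}

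The only genuinely delicate point is the sign, and the last step is to check that its exponent has the claimed parity. Expanding $\frac{r(r-1)}{2}$ with $r = r_1 + r_2$ gives $\frac{r_1(r_1-1)}{2} + \frac{r_2(r_2-1)}{2} + r_1 r_2$, so that $\frac{r(r-1)}{2} + r_1 r_2$ differs from $\frac{r_1(r_1-1)}{2} + \frac{r_2(r_2-1)}{2}$ by the even number $2r_1 r_2$. Hence the two signs agree, and solving the displayed identity for $\Omega^{(\alpha)}_{\lambda,\mu}(x)$ yields \eqref{lem:GLP4Formula}. I expect the main obstacle to be purely bookkeeping: being careful that the exponential prefactors cancel exactly and that each of the three sign contributions — from the reflection, from the common factor (which is sign-free), and from the block transposition — is assigned correctly, since a single misplaced power of $-1$ would spoil the final parity check.
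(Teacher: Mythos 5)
Your proof is correct and follows essentially the same route as the paper: the reflection identity for Wronskians giving $(-1)^{r(r-1)/2}$, the common-factor identity to redistribute the exponentials, the block transposition giving $(-1)^{r_1 r_2}$, and the parity identity $\frac{r(r-1)}{2}=\frac{r_1(r_1-1)}{2}+\frac{r_2(r_2-1)}{2}+r_1r_2$. The only cosmetic difference is that you start from $\Omega^{(\alpha)}_{\mu,\lambda}(-x)$ and work towards $\Omega^{(\alpha)}_{\lambda,\mu}(x)$ rather than the reverse, and all the exponential prefactors and signs check out.
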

\begin{proof}
We use the following elementary Wronskian properties. Assume 
that $f_1,\dots,f_r$, $g$ and $h$ are sufficiently differentiable. Then
\begin{align}
	\Wr[ g \cdot f_1,\dots, g \cdot f_r] & = \left(g(x)\right)^{r} \cdot 
	\Wr[f_1, \dots, f_r], 
	\label{eq:Wr1} \\
	\Wr[f_1 \circ h,\dots,f_r\circ h](x) & = 
	\left( h'(x)\right)^{\frac{r(r-1)}{2}} \cdot \Wr[f_1,\dots,f_r](h(x)).
	\label{eq:Wr2} 
\end{align}
Then, from \eqref{eq:OmegaLaMu} and \eqref{eq:Wr1} with $g(x) = e^{-x}$, 
we have
\[ \Omega_{\lambda,\mu}^{(\alpha)}
	= e^{r_1 x} \cdot \Wr \left[ e^{-x} f_1, \ldots, e^{-x} f_r \right], \]
where $f_1, \ldots, f_r$ are the functions from \eqref{eq:fj1}-\eqref{eq:fj2}.
By using \eqref{eq:Wr2} with $h(x) = -x$, we obtain
\[ \Omega_{\lambda,\mu}^{(\alpha)}(x)
	= (-1)^{\frac{r(r-1)}{2}} e^{r_1 x} \cdot 
	\Wr \left[ g_{r_2+1}, \ldots, g_{r}, g_1, \ldots, g_{r_2} \right](-x), \]
where $g_j(x) = e^x f_{r_1+j}(-x) = L^{(\alpha)}_{m_j}(x) $ for $j=1, \ldots, r_2$ and
$g_{r_2+j}(x) = e^x f_{j}(-x) = e^x L_{n_j}^{(\alpha)}(-x)$ for $j=1, \ldots, r_1$.
Permuting the first $r_1$ columns with the last $r_2$ columns gives
an extra factor $(-1)^{r_1r_2}$. Therefore, the total factor is
\begin{equation*}
	(-1)^{r_1r_2 + \frac{r(r-1)}{2} } = (-1)^{\frac{r_1(r_1-1)}{2}+ \frac{r_2(r_2-1)}{2}}.
\end{equation*}
Hence, we obtain \eqref{lem:GLP4Formula} 
in view of the definitions \eqref{eq:OmegaLaMu} and \eqref{eq:fj1}-\eqref{eq:fj2}.
\end{proof}

Dur\'an \cite{Duran} gave sufficient conditions for $\Omega^{(\alpha)}_{\lambda, \mu}$ 
to have no zeros on $[0,\infty)$. In a follow-up paper, 
Dur\'an and P\'erez \cite{Duran_Perez} proved that the obtained conditions are also necessary. For their result we need the notion of an even partition.

\begin{definition} 
A partition $\lambda = (\lambda_1, \ldots, \lambda_r)$ with $\lambda_r\geq 1$ is even 
if $r$ is even and $\lambda_{2j-1} = \lambda_{2j}$ for every $j=1, \ldots, \frac{r}{2}$. 
\end{definition}
By convention, when $r=0$, the (empty) partition is even. 

We state the following result for $\alpha > -1$, even though Dur\'an and P\'erez 
obtain a result for  every $\alpha\in\mathbb{R}$.

\begin{lemma}[\cite{Duran,Duran_Perez}] \label{lem:GLP3} 
Let $\alpha > -1$. Then the polynomial $\Omega^{(\alpha)}_{\lambda,\mu}$ has no zeros 
on $[0,\infty)$ if and only if $\lambda$ is an even partition.
\end{lemma}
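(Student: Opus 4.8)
The plan is to pass to the Schr\"odinger form \eqref{eq:LagDV2} and to recognise the Wronskian in \eqref{eq:OmegaLaMu} as a Wronskian of the eigenfunctions $\varphi_n^{(\alpha)}$ and $\psi_n^{(\alpha)}$ from Table \ref{tab:2}. Substituting $x\mapsto x^2$ and stripping the nonvanishing factors (the Gaussian $e^{x^2/2}$, the power $x^{\alpha+1/2}$ shared by the columns, and the Jacobian of $x\mapsto x^2$) by means of the Wronskian rules \eqref{eq:Wr1} and \eqref{eq:Wr2}, I expect an identity of the form
\begin{equation*}
\Omega^{(\alpha)}_{\lambda,\mu}(x^2) = c(x)\,\Wr\!\left[\varphi^{(\alpha)}_{n_1},\ldots,\varphi^{(\alpha)}_{n_{r_1}},\psi^{(\alpha)}_{m_1},\ldots,\psi^{(\alpha)}_{m_{r_2}}\right](x),
\end{equation*}
with $c(x)\neq 0$ for $x>0$. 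Since $x\mapsto x^2$ is a bijection of $(0,\infty)$, and since Lemma \ref{lem:GLP2} already guarantees $\Omega^{(\alpha)}_{\lambda,\mu}(0)\neq 0$ when $\alpha>-1$, the zeros of $\Omega^{(\alpha)}_{\lambda,\mu}$ on $[0,\infty)$ correspond exactly to the zeros of this Schr\"odinger-form Wronskian on $(0,\infty)$. The statement is thereby reduced to a nodeless-Wronskian question for eigenfunctions of \eqref{eq:LagDV2}.

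Second, I would separate the two kinds of seed functions. Table \ref{tab:2} shows that each $\psi^{(\alpha)}_{m_j}$ has eigenvalue $-4m_j-2(1+\alpha)<0$, strictly below every eigenvalue $4n_j+2(1+\alpha)>0$ of the $\varphi^{(\alpha)}_{n_j}$; moreover for $\alpha>-1$ the function $\psi^{(\alpha)}_m(x)=e^{x^2/2}x^{\alpha+1/2}L^{(\alpha)}_m(-x^2)$ is strictly positive on $(0,\infty)$, because the coefficients of $L^{(\alpha)}_m(-t)$ are all positive. In the Darboux--Crum picture the $\psi$-functions are therefore nodeless virtual states lying below the spectrum, whose inclusion never creates an interior zero. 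I would make this precise by an induction on $r_2$, each step being a state-adding Darboux transformation that preserves sign-definiteness of the relevant Wronskian, thereby reducing the problem to the case $\mu=\emptyset$, i.e.\ to $\Wr[\varphi^{(\alpha)}_{n_1},\ldots,\varphi^{(\alpha)}_{n_{r_1}}]$.

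Third comes the core: a Krein--Adler nodeless criterion for the genuine bound states $\varphi^{(\alpha)}_{n}$, ordered by increasing eigenvalue and possessing exactly $n$ interior nodes. The arithmetic translation is clean: since $n_j=\lambda_j+r_1-j$, the partition $\lambda$ is even precisely when $n_{2j}=n_{2j-1}-1$ for all $j$, that is, when $\{n_1,\ldots,n_{r_1}\}$ is a disjoint union of consecutive pairs. I would prove sufficiency by grouping the columns into such pairs and combining Krein's lemma that $\Wr[\varphi^{(\alpha)}_{n},\varphi^{(\alpha)}_{n+1}]$ is sign-definite with a total-positivity (Karlin--Szeg\H{o}) argument showing that the paired Wronskian inherits sign-definiteness; alternatively one iterates single Darboux--Crum steps that delete one consecutive pair at a time and verifies that no singularity appears.

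Finally, for necessity I would show that if $\lambda$ is not even then the pairing fails, so some bound state is deleted without its consecutive partner, and a Sturm oscillation and zero-counting argument forces the Wronskian to change sign on $(0,\infty)$, producing a zero. The duality \eqref{lem:GLP4Formula} of Lemma \ref{lem:GLP4} serves as a consistency check, since it interchanges the two half-lines and the roles of $\lambda$ and $\mu$. The main obstacle is exactly this quantitative node count for Wronskians of eigenfunctions --- both the sign-definiteness in the paired case and the forced sign change in the unpaired case --- which is the technical heart supplied by the Dur\'an--P\'erez analysis; by contrast the reductions in the first two steps are comparatively routine manipulations of Wronskians.
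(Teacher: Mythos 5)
First, a point of comparison: the paper does not prove Lemma \ref{lem:GLP3} at all. It is imported verbatim from \cite{Duran,Duran_Perez}, so there is no in-paper argument to measure your proposal against. What can be checked is whether your outline would stand on its own, and there the preparatory steps are sound but the core is missing. The reduction to the Schr\"odinger picture is correct: the identity you anticipate is exactly the one the authors derive later in the proof of Lemma \ref{lem:Residue}, namely $\Wr[\varphi^{(\alpha)}_{n_1},\dots,\psi^{(\alpha)}_{m_{r_2}}](x)=2^{r(r-1)/2}x^{\alpha r+r^2/2}e^{-rx^2/2}\,\Omega^{(\alpha)}_{\lambda,\mu}(x^2)$, with a prefactor that is nonvanishing on $(0,\infty)$; Lemma \ref{lem:GLP2} does cover the endpoint $x=0$ for $\alpha>-1$; the positivity of $L^{(\alpha)}_m(-t)$ for $t\geq 0$, $\alpha>-1$ is correct; and the arithmetic translation ``$\lambda$ even $\iff$ $\{n_1,\dots,n_{r_1}\}$ is a disjoint union of consecutive pairs'' is exactly right and is the expected Krein--Adler condition.

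The genuine gap is that steps two through four assert precisely the statements that constitute the theorem. The classical Krein--Adler/Crum theory you invoke applies to Wronskians of genuine bound states of a regular problem; here two complications intervene. First, the $\psi^{(\alpha)}_{m_j}$ are formal eigenfunctions growing like $e^{x^2/2}$, not elements of the spectrum, so ``state-adding seeds below the ground state never create interior zeros'' is not a citation-free fact: after one Darboux step the remaining seed functions are transformed, and one must verify that positivity and the pairing structure survive at the level of the new (already rational-extended) operator. Your induction on $r_2$ names this but does not carry it out. Second, the operator \eqref{eq:LagDV2} lives on the half-line with a singular endpoint at the origin, so the oscillation and node-counting theory needed both for sufficiency (sign-definiteness of the paired Wronskian) and for necessity (a forced sign change when a level is deleted without its partner) is the half-line version, which is the actual technical content of \cite{Duran_Perez} and of the disconjugacy analysis in \cite{Grandati_Quesne} and \cite{GarciaFerrero_GomezUllate}. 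Since you explicitly defer this ``technical heart'' to the Dur\'an--P\'erez analysis, the proposal is an accurate roadmap of a viable (and conceptually attractive) spectral-theoretic route, but not a proof: it reduces Lemma \ref{lem:GLP3} to unproved claims that are essentially equivalent to it. The consistency check via Lemma \ref{lem:GLP4} is fine but carries no probative weight.
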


Suppose $\alpha > -1$. Combining Lemmas \ref{lem:GLP3} and \ref{lem:GLP4} gives us that 
$\Omega^{(\alpha)}_{\lambda,\mu}$ has no zeros on $(-\infty,0]$ if and only if 
$\mu$ is an even partition. Moreover, $\Omega^{(\alpha)}_{\lambda,\mu}$ has no real 
zeros if and only if both $\lambda$ and $\mu$ are even partitions. 

Finally, we state an invariance property which was conjectured in \cite{Duran} 
and proven in \cite{Curbera_Duran}. This property is very conveniently
stated in terms of partitions, as it involves the conjugate partition.
The partition $\lambda'$ is called the conjugate partition of $\lambda$ if
\begin{equation}\label{eq:PartConj}
	\lambda'_i = \#\{j\in\mathbb{N} \mid \lambda_j \geq i\}, 
	\qquad i = 1, \ldots, \lambda_1.
\end{equation}
If we use a graphical representation of a partition by means of a Young diagram,
then the Young diagram of the conjugate partition is obtained by reflection in the
main diagonal, as illustrated in Figure \ref{fig:conjugate}. 

\begin{figure}[h]
\centering
\begin{tikzpicture}
\node[above] at (-4,1/2) {$\lambda=(4,2,1)$};
\foreach \y in {4}
\draw[step=1/2] (-5,0) grid (-5+\y/2,1/2);
\foreach \y in {2}
\draw[step=1/2] (-5,-1/2) grid (-5+\y/2,0);
\foreach \y in {1}
\draw[step=1/2] (-5,-1) grid (-5+\y/2,-1/2);

\draw[->] (-2,-0.25) -- (-1,-0.25);
\node[above] at (-1.5,-0.25) {Conjugation};

\node[above] at (1.25,1/2) {$\lambda'=(3,2,1,1)$};
\foreach \y in {3}
\draw[step=1/2] (0,0) grid (0+\y/2,1/2);
\foreach \y in {2}
\draw[step=1/2] (0,0-1/2) grid (0+\y/2,0);
\foreach \y in {1}
\draw[step=1/2] (0,-1) grid (0+\y/2,-1/2);
\foreach \y in {1}
\draw[step=1/2] (0,-3/2) grid (0+\y/2,-1);
\end{tikzpicture}
\caption{The conjugate partition \label{fig:conjugate}}
\end{figure}
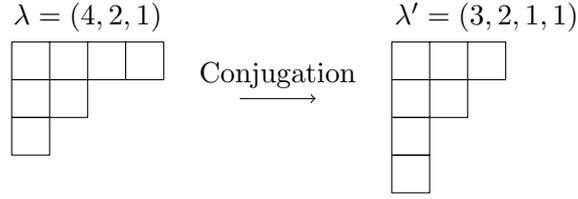

\begin{lemma}[Theorem 6.1 in \cite{Curbera_Duran}] \label{lem:GLP5} 
For every $\alpha\in\mathbb{R}$ and partitions $\lambda$ and $\mu$, we have
\begin{equation*}
	\Omega^{(\alpha)}_{\lambda, \mu}(x) 
		=  (-1)^{\frac{\mu_1(\mu_1 -1)}{2}+|\lambda|+|\mu|+\frac{r_2(r_2-1)}{2}}
		\Omega^{(-\alpha-t)}_{\lambda',\mu'}(-x),
\end{equation*}
where
\[ t = \lambda_1 + \mu_1 + r_1 + r_2, \]
with the convention that $\lambda_1 = 0$ if $r_1 = 0$ (i.e., $\lambda$ is the empty
partition) and $\mu_1 = 0$ if $r_2 = 0$.  
\end{lemma}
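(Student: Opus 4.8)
The plan is to work throughout from the explicit determinant representation \eqref{eq:OmegaLaMu2}, in which the entry in row $i$ (for $i=1,\dots,r$) carries the shifted parameter $\alpha+i-1$, the first $r_1$ columns carry Laguerre polynomials evaluated at $x$, and the last $r_2$ columns carry Laguerre polynomials evaluated at $-x$. The identity asserts that replacing $\alpha$ by $-\alpha-t$, conjugating both partitions, and reflecting $x\mapsto -x$ returns the same polynomial up to an explicit sign. I would separate the proof into one combinatorial ingredient and one analytic ingredient, isolating each before combining them. The conceptual origin of the whole phenomenon is that the Schr\"odinger form \eqref{eq:LagDV2} has potential $x^2+\frac{4\alpha^2-1}{4x^2}$, which depends on $\alpha$ only through $\alpha^2$ and is therefore manifestly invariant under $\alpha\mapsto-\alpha$; the shift by $t$ and the conjugation are exactly what is needed to make the polynomial parts of the corresponding eigenfunctions match after the Darboux construction.

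The combinatorial ingredient is that conjugation of a partition is complementation-and-reflection of its degree multiset inside a box. From \eqref{eq:lambdamu} and \eqref{eq:PartConj} one checks directly that the degrees $n_1>\dots>n_{r_1}$ form a subset of $\{0,1,\dots,\lambda_1+r_1-1\}$ whose complement, reflected through $n\mapsto(\lambda_1+r_1-1)-n$, is precisely the degree sequence $n'_1>\dots>n'_{\lambda_1}$ associated with $\lambda'$, and likewise for $\mu$ with box size $\mu_1+r_2$. In particular $t=\lambda_1+\mu_1+r_1+r_2$ is the sum of the two box sizes, and the conjugate data is exactly the complementary Wronskian data.

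The analytic ingredient is a reflection identity for Laguerre polynomials under $\alpha\mapsto-\alpha-t$ that implements this complementation at the level of the whole determinant. I would feed the hypergeometric form $L_n^{(\alpha)}(x)=\binom{n+\alpha}{n}\,{}_1F_1(-n;\alpha+1;x)$ into \eqref{eq:OmegaLaMu2} and perform the column operations that turn the selected degrees into the reflected-complementary ones, so that the $(\lambda,\mu;\alpha)$-array is carried, up to explicit row and column factors, onto the $(\lambda',\mu';-\alpha-t)$-array with $x\mapsto-x$. Here the duality Lemma \ref{lem:GLP4} is very useful: via \eqref{lem:GLP4Formula} it interchanges the $\lambda$-block and the $\mu$-block, so only one block needs to be analyzed in detail and the other follows by symmetry. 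This step reduces the claim to an equality of the two determinants up to a single $x$-independent scalar.

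The main obstacle, and the part I expect to be delicate rather than deep, is pinning down the precise sign $(-1)^{\frac{\mu_1(\mu_1-1)}{2}+|\lambda|+|\mu|+\frac{r_2(r_2-1)}{2}}$. Several contributions must be assembled coherently: the factor $(-1)$ from each of the $r_2$ columns evaluated at $-x$, the alternating prefactors already present in \eqref{eq:OmegaLaMu2}, the Vandermonde constants from the leading coefficients, and, most delicately, the permutation that re-sorts the reflected-complementary degrees back into decreasing order, which is what produces the triangular exponents $\frac{\mu_1(\mu_1-1)}{2}$ and $\frac{r_2(r_2-1)}{2}$. Rather than track every factor through the determinant, I would first establish equality up to an $x$-independent constant by the structural argument above, then fix that constant by comparing the (manifestly $\alpha$-independent) leading coefficients via Lemma \ref{lem:GLP1}, and finally determine the sign by evaluating both sides in one convenient special case, for instance the one-row check $L_1^{(\alpha)}(x)=-L_1^{(-\alpha-2)}(-x)$, extending to the general case by the multiplicativity of these contributions over the two blocks.
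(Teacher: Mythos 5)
Your two guiding ideas are the right ones -- conjugation of a partition is complementation-and-reflection of the degree set inside a box (this is exactly the Maya-diagram picture the paper uses), and the $\alpha\mapsto-\alpha$ symmetry of the potential in \eqref{eq:LagDV2} is indeed the conceptual source of the identity. But the mechanism you propose for the analytic step does not work as stated, and the proof has a genuine gap there. The determinant \eqref{eq:OmegaLaMu2} for $\Omega^{(\alpha)}_{\lambda,\mu}$ has size $r_1+r_2$, while the one for $\Omega^{(-\alpha-t)}_{\lambda',\mu'}$ has size $r(\lambda')+r(\mu')=\lambda_1+\mu_1$; these are different in general, so no sequence of column operations can carry one array onto the other. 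The paper circumvents this by never touching \eqref{eq:OmegaLaMu2} for this purpose: it observes that the conjugate-partition Wronskian $\Omega^{(-\alpha)}_{\mu',\lambda'}$ is, after extracting the factor $x^{-\alpha}$ via \eqref{eq:Wr1}, literally the Wronskian of the type-three and type-four eigenfunctions \eqref{app:fj3}--\eqref{app:fj4} (of the correct size $r_3+r_4$), and then invokes the general reduction Theorem \ref{thm:appA}. There the change of determinant size is produced by repeated row-expansions of the Wronskian (Lemma \ref{app:lem:1}), one for each shift of the origin of the Maya diagram -- an operation of a different nature from column manipulation, and the one your outline is missing.

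The fallback you describe -- establish proportionality ``structurally,'' then read off the constant -- also leaves the hard part unproved. The proportionality of $\Omega^{(\alpha)}_{\lambda,\mu}(x)$ and $\Omega^{(-\alpha-t)}_{\lambda',\mu'}(-x)$ is precisely the content of the lemma; the invariance of the potential under $\alpha\mapsto-\alpha$ motivates it but does not by itself show that the two Darboux data produce the same transformed operator, which is again what the shifting procedure of Theorem \ref{thm:appA} establishes. I will grant that \emph{if} proportionality were known, then comparing leading coefficients via Lemma \ref{lem:GLP1} would determine the constant, sign included, since both leading coefficients are explicit and nonzero; but that reduces the sign to a nontrivial combinatorial identity relating the Vandermonde-and-factorial products for $(\lambda,\mu)$ to those for $(\lambda',\mu')$, which still has to be proved. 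Your proposal to instead fix the sign from a single one-row example ``by multiplicativity over the two blocks'' is not justified: the exponent $\frac{\mu_1(\mu_1-1)}{2}+|\lambda|+|\mu|+\frac{r_2(r_2-1)}{2}$ is not multiplicative in any sense that a rank-one check would pin down. In the paper the sign drops out of the explicit bookkeeping of the constants $d_1,d_2$ in Theorem \ref{thm:appA}, combined with one application of the duality Lemma \ref{lem:GLP4} to swap the two partitions -- that last use of Lemma \ref{lem:GLP4} is the one point where your outline and the paper genuinely coincide.
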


Lemma \ref{lem:GLP5} will follow from Theorem \ref{thm:appA} in Section \ref{sec:ConstructionGLP}. The proof is given in Section \ref{sec:ConjPart}.

\subsection{Exceptional Laguerre polynomials}
We fix the parameter $\alpha$ and two partitions $\lambda$ and $\mu$ of lengths 
$r_1$ and $r_2$, respectively. As before we write $r:=r_1+r_2$. Furthermore, 
suppose that the functions $f_1, \ldots, f_r$ are as in \eqref{eq:fj1}-\eqref{eq:fj2}.
We obtain the exceptional Laguerre polynomials by adding one Laguerre polynomial with 
the same parameter $\alpha$ but with degree different from $n_1, \ldots, n_{r_1}$ 
to the Wronskian. If we add the Laguerre polynomial of degree $s$, then this is
\begin{equation} \label{eq:XLPexample}
	e^{-r_2x} \cdot \Wr \left[f_1, \ldots, f_r, L_s^{(\alpha)} \right],
\end{equation}
where we assume that $s \not\in \{n_1, \ldots, n_{r_1}\}$. Up to a possible sign factor,
this polynomial is the generalized Laguerre polynomial of parameter $\alpha$ 
associated with the partitions $\tilde{\lambda}$ and $\mu$, where
$\tilde{\lambda}$ is the partition corresponding to the degrees $n_1,\dots,n_{r_1}$ and $s$.
Since $|\tilde{\lambda}| = |\lambda| + s - r_1$, we have, by Lemma \ref{lem:GLP1}, that
the degree of \eqref{eq:XLPexample} is $s + |\lambda| + |\mu| - r_1$. By varying $s$, 
we obtain polynomials of degrees that are in the degree sequence 
associated with $\lambda$ and $\mu$ that is defined as follows.

\begin{definition}
The degree sequence associated with partitions $\lambda$ and $\mu$ is
\begin{equation} \label{eq:nLaMu} 
	\mathbb N_{\lambda, \mu}
		= \{ n \in \mathbb{N} \cup \{0\} \mid  n\geq |\lambda|+|\mu| - r_1 
		\text{ and } n - |\lambda|-|\mu| \neq \lambda_j-j \text{ for } j=1,\dots,r_1 \}.
\end{equation}
\end{definition}

For $n \in \mathbb N_{\lambda, \mu}$ we take $s = n - |\lambda| - |\mu| + r_1$
and this is a non-negative integer because of the first condition in \eqref{eq:nLaMu}.
The second condition is such that $s\neq n_j$ for every $j=1,\dots,r_1$. 
This then leads to the following definition of the exceptional
Lauerre polynomials.

\begin{definition}
The exceptional Laguerre polynomials of parameter $\alpha$ associated with the two
partitions $\lambda$ and $\mu$ are given by
\begin{equation}\label{def:XLP1}
	L^{(\alpha)}_{\lambda,\mu,n}(x) 
		=  e^{-r_2 x} \cdot \Wr \left[ f_1, \ldots, f_r, L^{(\alpha)}_s \right], 
	\qquad n \in \mathbb N_{\lambda, \mu},
\end{equation}
where $s = n - |\lambda|- |\mu| + r_1$ and $f_1, \ldots, f_r$ are as in \eqref{eq:fj1} and \eqref{eq:fj2}.
\end{definition}

The definition is such that $L^{(\alpha)}_{\lambda,\mu,n}$ has degree $n$. 
There are $|\lambda| + |\mu|$ degrees that do not occur, namely those 
non-negative integers outside of $\mathbb N_{\lambda, \mu}$. They are called the exceptional degrees.
The leading coefficient of $L^{(\alpha)}_{\lambda,\mu,n}$ can be determined 
as in Lemma \ref{lem:GLP1}.

\begin{remark}
When both partitions are empty we obtain that 
$L^{(\alpha)}_{\emptyset,\emptyset,n}(x) = L^{(\alpha)}_n(x)$ for all 
$x\in\mathbb{C}$. Hence, the exceptional Laguerre polynomial is a generalization of 
the usual Laguerre polynomial. 
\end{remark}

\begin{remark}
Similarly to \eqref{def:XLP1}, we can define another exceptional Laguerre polynomial by 
\begin{equation}\label{def:XLP2}
	\tilde{L}^{(\alpha)}_{\lambda,\mu,n}(x) 
	= e^{-(r_2+1)x} \cdot \Wr\left[f_1, \ldots, f_r, e^x L^{(\alpha)}_s(-x) \right],
\end{equation}
where we take $s = n - |\lambda| - |\mu| + r_2 \geq 0$, and $s \neq m_j$ for
$j = 1, \ldots, r_2$. Using the duality from Lemma \ref{lem:GLP4}, 
we can reduce this to the case \eqref{def:XLP1} since
\begin{equation}\label{eq:XLP1+2}
	\tilde{L}^{(\alpha)}_{\lambda,\mu,n}(x)
	= (-1)^{\frac{r_1(r_1+1)}{2} + \frac{r_2(r_2+1)}{2}} L^{(\alpha)}_{\mu,\lambda,n}(-x).
\end{equation}
\end{remark}

If $\alpha>-1$ and the partition $\lambda$ is even, then the exceptional Laguerre
polynomials form a complete set of orthogonal polynomials on the positive real 
line. This result is due to Dur\'an and P\'erez \cite{Duran,Duran_Perez}.

\begin{lemma} \label{lem:ELP}
Suppose $\alpha>-1$. If $\lambda$ is an even partition, then the polynomials $L^{(\alpha)}_{\lambda,\mu,n}$ for $n \in \mathbb N_{\lambda, \mu}$ are orthogonal on $[0,\infty)$ with respect to the positive weight function
\begin{equation*}
	W^{(\alpha)}_{\lambda, \mu}(x) = \frac{x^{\alpha + r} e^{-x}}
	{\left(\Omega^{(\alpha)}_{\lambda, \mu}(x) \right)^2}, \qquad x > 0.
\end{equation*}
That is, if $n, m \in \mathbb N_{\lambda, \mu}$ with $n \neq m$, then
\begin{equation*}
	\int_0^{\infty} 	L^{(\alpha)}_{\lambda, \mu, n}(x) 	L^{(\alpha)}_{\lambda, \mu, m}(x) 
	W^{(\alpha)}_{\lambda, \mu}(x) dx = 0.
\end{equation*}
Moreover, they form a complete orthogonal set in 
$L^2\left([0,\infty), W^{(\alpha)}_{\lambda, \mu}(x)dx\right)$.
\end{lemma}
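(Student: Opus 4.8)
The plan is to realize the exceptional Laguerre polynomials as eigenfunctions of a single second--order differential operator obtained from the Laguerre operator by an $r$--step Darboux--Crum transformation, to deduce the orthogonality from the self--adjointness of that operator together with distinctness of the eigenvalues, and to obtain completeness by transporting the completeness of the classical Laguerre system through the intertwining relation. The hypotheses enter right away: for even $\lambda$ and $\alpha>-1$, Lemma \ref{lem:GLP3} guarantees $\Omega^{(\alpha)}_{\lambda,\mu}(x)\neq 0$ on $[0,\infty)$, which is exactly what makes $W^{(\alpha)}_{\lambda,\mu}$ a positive, pole--free weight on $(0,\infty)$. Near the origin $W^{(\alpha)}_{\lambda,\mu}(x)\sim \Omega^{(\alpha)}_{\lambda,\mu}(0)^{-2}\,x^{\alpha+r}$ with $\alpha+r>-1$ (as $\alpha>-1$ and $r\geq 0$), so the weight is integrable at $0$, while the factor $e^{-x}$ controls the behaviour at $+\infty$.

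First I would pass to the Schr\"odinger picture \eqref{eq:LagDV2}--\eqref{eq:LagDV3}, where the two seed families of \eqref{eq:fj1}--\eqref{eq:fj2} become the functions $\varphi^{(\alpha)}_{n_j}$ and $\psi^{(\alpha)}_{m_j}$ of Table \ref{tab:2}, and carry out the Darboux--Crum transformation with these $r$ seeds. The transformed Hamiltonian
\[ H_{\lambda,\mu}=-\frac{d^2}{dx^2}+V_0-2\,(\log \Wr[\varphi_{n_1}^{(\alpha)},\dots,\psi_{m_{r_2}}^{(\alpha)}])'' \]
is formally self--adjoint with respect to Lebesgue measure, is nonsingular on $(0,\infty)$ precisely because $\Omega^{(\alpha)}_{\lambda,\mu}$ has no zeros there, and the transformed eigenfunction built from the seed $\varphi^{(\alpha)}_s$ keeps the eigenvalue $4s+2(1+\alpha)$ of $\varphi^{(\alpha)}_s$, where $s=n-|\lambda|-|\mu|+r_1$. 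A gauge computation --- stripping the explicit prefactor and undoing the change of variables $t=x^2$ --- identifies this eigenfunction, up to a nonzero constant, with $x^{\alpha+r+\frac12}e^{-x^2/2}\,\Omega^{(\alpha)}_{\lambda,\mu}(x^2)^{-1}L^{(\alpha)}_{\lambda,\mu,n}(x^2)$. Pushing the $L^2(dx)$--orthogonality of $H_{\lambda,\mu}$--eigenfunctions through $t=x^2$ then produces exactly the weight $x^{\alpha+r}e^{-x}\,\Omega^{(\alpha)}_{\lambda,\mu}(x)^{-2}$, the extra $x^{r}$ reflecting the shift of the indicial exponent at the origin caused by the $r$ Darboux steps.

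With this in hand the orthogonality is immediate: distinct admissible $n$ give distinct $s$, hence distinct eigenvalues $4s+2(1+\alpha)$, and eigenfunctions of the self--adjoint operator $H_{\lambda,\mu}$ for distinct eigenvalues are orthogonal in $L^2((0,\infty),dx)$ --- provided the boundary terms in the integration by parts vanish. These boundary terms do vanish by the integrability discussion above (the $x^{\alpha+r+\frac12}$ behaviour at $0$ with $\alpha+r>-1$, and the Gaussian decay at $+\infty$), and this is precisely where $\alpha>-1$ and the evenness of $\lambda$ are used. Translating back to the variable $t$ yields the stated orthogonality of the $L^{(\alpha)}_{\lambda,\mu,n}$ against $W^{(\alpha)}_{\lambda,\mu}$.

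For completeness I would use the $r$--th order intertwining operator $\mathcal A$ implementing the Darboux--Crum transformation together with its formal adjoint $\mathcal A^{\dagger}$, which satisfy $\mathcal A^{\dagger}\mathcal A=\prod_i(H_0-E_i)$ and $\mathcal A\,\mathcal A^{\dagger}=\prod_i(H_{\lambda,\mu}-E_i)$ over the seed energies $E_i$. Since $\{L^{(\alpha)}_s\}_{s\geq 0}$ is complete in $L^2([0,\infty),x^\alpha e^{-x}\,dt)$ for $\alpha>-1$, one argues that if $g\in L^2(W^{(\alpha)}_{\lambda,\mu})$ is orthogonal to every $L^{(\alpha)}_{\lambda,\mu,n}$, then after the gauge identification $\mathcal A^{\dagger}$ carries $g$ to a classical $L^2$--function orthogonal to all $\varphi^{(\alpha)}_s$ with admissible $s$, hence supported on the finitely many excluded degrees; a domain and boundary analysis at the origin then forces this finite--dimensional remainder to vanish, so $g=0$. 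I expect this last step --- controlling the finitely many missing degrees and ruling out spurious states compatible with the boundary behaviour at $0$ --- to be the main obstacle, and it is exactly the delicate point established by Dur\'an and P\'erez \cite{Duran,Duran_Perez}; alternatively one may invoke their completeness theorem directly.
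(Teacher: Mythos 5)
The paper itself offers no proof of this lemma: it is quoted directly from Dur\'an and Dur\'an--P\'erez \cite{Duran,Duran_Perez}, and the authors explicitly state elsewhere that they do not treat the boundary-condition/completeness issues in this paper. Your proposal therefore takes a genuinely different route, namely actually sketching the mechanism behind the cited result. The orthogonality half of your sketch is sound and, in fact, uses exactly the machinery the paper deploys later for a different purpose (the proof of Lemma \ref{lem:Residue} in Section \ref{sec:ProofExceptionalZeros} constructs the same transformed operator \eqref{eq:Oplamu}--\eqref{eq:Vlamu} and the same gauge-identified eigenfunction \eqref{eq:OplamuEF2}, so your identification of $x^{\alpha+r+\frac12}e^{-x^2/2}\Omega^{(\alpha)}_{\lambda,\mu}(x^2)^{-1}L^{(\alpha)}_{\lambda,\mu,n}(x^2)$ as the transformed eigenfunction is confirmed by the paper). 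Your use of Lemma \ref{lem:GLP3} to guarantee a pole-free potential on $(0,\infty)$, the distinctness of the eigenvalues $4s+2(1+\alpha)$, and the change of variables $t=x^2$ producing the weight $x^{\alpha+r}e^{-x}\Omega^{-2}$ are all correct; the boundary term at the origin does vanish because the leading $x^{2(\alpha+r)}$ contributions to the Wronskian of two such eigenfunctions cancel and $\alpha+r>-1$, though you should say this cancellation explicitly rather than gesture at integrability of the weight, which by itself is not the relevant condition for the Green's-identity boundary term. The completeness half, however, is not a proof: your intertwining argument reduces the statement to controlling a finite-dimensional remainder and the domain of $\mathcal A^{\dagger}$, and at that point you appeal to Dur\'an--P\'erez, which is precisely where all the difficulty lives (and is the reason the paper cites rather than proves the lemma). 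So what your approach buys is a self-contained and correct derivation of the orthogonality relation; what it does not buy is an independent proof of completeness, for which both you and the paper ultimately rely on the same external references.
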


The conditions $\alpha >-1$ and $\lambda$ an even partition are not best possible.
More general, but less easy to state, conditions can also be found in 
\cite{Duran,Duran_Perez}.

\begin{remark}
In the literature, one often refers to exceptional Laguerre polynomials when these 
polynomials form an orthogonal complete set. In our set-up, we defined a set of polynomials 
in \eqref{def:XLP1} and named them as exceptional Laguerre polynomials for every two 
partitions $\lambda$ and $\mu$ and for every choice of parameter $\alpha$. 
For us, the exceptional Laguerre polynomials form an orthogonal complete set of 
polynomials  only in certain special cases, such as given by Lemma \ref{lem:ELP}.
\end{remark}

\section{The degree and leading coefficient of $\Omega^{(\alpha)}_{\lambda, \mu}$} \label{sec:DegLcoeff}
In this section we give an alternative proof of Lemma \ref{lem:GLP1}. It follows from the subsequent more general result that holds for arbitrary polynomials, and not just for Laguerre polynomials.

\begin{proposition}\label{prop:degwr}
	Let $r_1,r_2$ be non-negative integers and define $r=r_1+r_2$. Let $R_1,\dots,R_r$ be 
	non-zero polynomials such that $\deg R_i \neq \deg R_{j}$ whenever $i\neq j$ and 
	$1\leq i,j \leq r_1$ or $r_1+1 \leq i,j \leq r$. Then the polynomial
	\begin{equation} \label{eq:defOmega}
	\Omega(x) = 
	e^{-r_2x} \cdot \Wr\left[R_1,\dots,R_{r_1},e^{x} R_{r_1+1},\dots,e^{x} R_{r}\right]
	\end{equation}
	has degree
	\begin{equation} \label{eq:degOmega}
	\deg \Omega = \sum\limits_{i=1}^{r} \deg R_{i} - \binom{r_1}{2} - \binom{r_2}{2}.
	\end{equation}
	Moreover, if all polynomials $R_i$ are monic, then the leading coefficient of $\Omega$ 
	is given by
	\begin{equation} \label{eq:lcOmega}
	\prod_{1\leq i < j \leq r_1}\left(\deg R_j - \deg R_i \right) \prod_{1\leq i < j \leq r_2}\left(\deg R_{r_1+j} - \deg R_{r_1+i} \right).
	\end{equation}
\end{proposition}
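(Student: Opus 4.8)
The plan is to strip off the exponential prefactor, turn $\Omega$ into an ordinary determinant, and then read off the degree and leading coefficient from a Laplace expansion that keeps the two blocks of rows separate. Writing $D=\frac{d}{dx}$ and $E=D+1$, one has $\frac{d^{j-1}}{dx^{j-1}}\bigl(e^{x}R_i\bigr)=e^{x}E^{j-1}R_i$, so each of the last $r_2$ rows of the Wronskian in \eqref{eq:defOmega} carries a common factor $e^{x}$. Factoring these out contributes exactly $e^{r_2x}$, which cancels the prefactor, so that $\Omega=\det N$, where $N$ is the $r\times r$ matrix whose first $r_1$ rows are $\bigl(R_i^{(j-1)}\bigr)_j$ and whose last $r_2$ rows are $\bigl(E^{j-1}R_i\bigr)_j$. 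In the first block the entry in column $j$ has degree $\deg R_i-(j-1)$, strictly decreasing across the row; in the second block every entry $E^{j-1}R_i$ has the same degree $\deg R_i$ and the same leading coefficient, independent of $j$. This last feature is exactly what produces the extra drop by $\binom{r_2}{2}$ in \eqref{eq:degOmega}.

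Next I would expand $\det N$ by the generalized Laplace rule along the first $r_1$ rows, obtaining a signed sum over $r_1$-element column sets $S$ of the product of the $D$-block minor on columns $S$ and the $E$-block minor on the complementary columns $\bar S$. The $D$-block minor is a generalized Wronskian of polynomials with distinct degrees, and from the entry degrees its degree is at most $\sum_{i\le r_1}\deg R_i-\sum_{j\in S}(j-1)$. For $S=\{1,\dots,r_1\}$ it is the \emph{ordinary} Wronskian $\Wr[R_1,\dots,R_{r_1}]$; since in that determinant every permutation term has the same degree $\sum_{i\le r_1}\deg R_i-\binom{r_1}{2}$, its top coefficient is the Vandermonde determinant of the falling factorials $(\deg R_i)_{\underline{m}}$, equal to $\prod_{1\le i<j\le r_1}(\deg R_j-\deg R_i)$ in the monic case.

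The crux is the $E$-block minor $\det\bigl(E^{a_k}R_i\bigr)$ with increasing column orders $a_1<\cdots<a_{r_2}$. Using $E^{a}R_i=\sum_{m\ge0}\binom{a}{m}R_i^{(m)}$ I would factor this matrix as $AB$ with $A_{i,m}=R_i^{(m)}$ and $B_{m,k}=\binom{a_k}{m}$, and apply Cauchy--Binet to write the minor as a sum over $r_2$-subsets $T\subset\{0,1,2,\dots\}$ of the generalized Wronskian $\det\bigl(R_i^{(m)}\bigr)_{m\in T}$ (of degree at most $\sum_{i>r_1}\deg R_i-\sum_{m\in T}m$) times the constant $\det\bigl(\binom{a_k}{m}\bigr)_{m\in T}$. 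Since $\sum_{m\in T}m$ is uniquely minimized by $T=\{0,\dots,r_2-1\}$, every such minor has degree at most $\sum_{i>r_1}\deg R_i-\binom{r_2}{2}$, with equality because the surviving coefficient is a product of two nonzero Vandermonde factors — one in the distinct degrees $\deg R_{r_1+i}$, one in the distinct orders $a_k$. The decisive point is that this degree drop is the \emph{same} $\binom{r_2}{2}$ for every column set $\bar S$.

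Combining the two bounds, the $S$-term has degree at most $\sum_i\deg R_i+r_1-\sum_{j\in S}j-\binom{r_2}{2}$, which is strictly maximized by the single set $S=\{1,\dots,r_1\}$; hence no cancellation occurs at the top degree and $\deg\Omega=\sum_i\deg R_i-\binom{r_1}{2}-\binom{r_2}{2}$, as in \eqref{eq:degOmega}. The leading coefficient is then the product of the two block leading coefficients, the Laplace sign for this $S$ being $+1$. For the relevant column set $\bar S=\{r_1+1,\dots,r\}$ one has $a_k=r_1+k-1$, so
\[
\det\Bigl(\tbinom{a_k}{m}\Bigr)_{m=0}^{r_2-1}=\frac{\prod_{1\le k<k'\le r_2}(k'-k)}{\prod_{m=0}^{r_2-1}m!}=1,
\]
the factorials cancel, and the $E$-block leading coefficient reduces to $\prod_{1\le i<j\le r_2}(\deg R_{r_1+j}-\deg R_{r_1+i})$, giving exactly \eqref{eq:lcOmega}. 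I expect the main obstacle to be precisely this $\binom{r_2}{2}$ cancellation in the $E$-block and the verification that it is independent of the surviving columns; the Cauchy--Binet factorization makes it transparent, and the remaining work is bookkeeping of the Laplace and Cauchy--Binet signs to confirm that the final coefficient carries no extra factor.
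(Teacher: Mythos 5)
Your argument is correct, but it is a genuinely different proof from the one in the paper. The paper proceeds by induction on $N=\sum_i\deg R_i-\binom{r_1}{2}-\binom{r_2}{2}$: it differentiates $\Omega$ via a product-rule identity, applies the induction hypothesis to each of the $r$ resulting terms, and sums their leading coefficients using the partial-fraction identity of Lemma \ref{lem:lcoeff} before integrating back up. You instead compute directly: after cancelling the exponentials you Laplace-expand the polynomial determinant along the two row blocks, bound the $D$-block minors by $\sum_{i\le r_1}\deg R_i-\sum_{j\in S}(j-1)$, and use the factorization $E^{a}R=\sum_m\binom{a}{m}R^{(m)}$ together with Cauchy--Binet to show that \emph{every} $E$-block minor drops by exactly $\binom{r_2}{2}$, with the top coefficient isolated in the single term $T=\{0,\dots,r_2-1\}$; the unique maximizer $S=\{1,\dots,r_1\}$ then yields the degree and, via the two Vandermonde determinants of falling factorials and the identity $\det\bigl(\binom{r_1+k-1}{m}\bigr)=1$, the leading coefficient \eqref{eq:lcOmega}. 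All the steps check out (including the handling of vanishing entries $R_i^{(j-1)}$ when $j-1>\deg R_i$, where the falling factorial correctly vanishes, and the Laplace sign $+1$). What each approach buys: yours makes the provenance of the two binomial drops completely transparent and avoids the combinatorial lemma altogether, at the cost of heavier determinant bookkeeping; the paper's induction is more uniform and transfers with almost no change to the generalized-derivative setting of Proposition \ref{prop:degdet}, though your $D$-block estimate over arbitrary column sets $S$ in fact already contains that upper bound as well.
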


We use the notation
\begin{equation} \label{eq:defOmega2} 
\Omega =  \Omega(R_1, \ldots R_{r_1}; R_{r_1+1}, \ldots, R_{r})
= e^{-r_2x} \cdot \Wr\left[R_1,\dots,R_{r_1},e^{x} R_{r_1+1},\dots,e^{x} R_{r}\right].
\end{equation}
The proof is by induction on the total sum of the degrees, and
in the induction we use the following lemma.

\begin{lemma} Let $R_1, \ldots, R_r$ be polynomials, not necessarily of distinct degrees.
	Then
	\begin{multline} \label{eq:Omegaprime} 
	\frac{d}{dx}  \Omega(R_1, \ldots R_{r_1}; R_{r_1+1}, \ldots, R_{r}) \\
	= \Omega(R_1', \ldots, R_{r_1}; R_{r_1+1}, \ldots, R_{r}) + \cdots +
	\Omega(R_1, \ldots, R_{r_1}'; R_{r_1+1}, \ldots, R_{r}) \\
	+ \Omega(R_1, \ldots, R_{r_1}; R_{r_1+1}', \ldots, R_{r}) 
	+\cdots +	\Omega(R_1, \ldots, R_{r_1}; R_{r_1+1}, \ldots, R_{r}').
	\end{multline}
\end{lemma}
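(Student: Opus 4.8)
The plan is to reduce everything to a single clean fact about Wronskians and then let the exponential prefactors take care of themselves. First I would record the elementary identity that for any sufficiently differentiable $g_1, \ldots, g_r$,
\begin{equation*}
	\frac{d}{dx}\Wr[g_1, \ldots, g_r] = \sum_{k=1}^{r} \Wr[g_1, \ldots, g_{k-1}, g_k', g_{k+1}, \ldots, g_r].
\end{equation*}
This follows by writing the Wronskian as $\det\left(g_i^{(j-1)}\right)_{1 \le i,j \le r}$ and differentiating the determinant row by row: differentiating the $i$-th row entrywise turns $(g_i, g_i', \ldots, g_i^{(r-1)})$ into $(g_i', g_i'', \ldots, g_i^{(r)})$, which is precisely the Wronskian row associated with $g_i'$, while the remaining rows are unchanged. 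Summing over the rows gives the stated formula, and I would verify the $r=2$ case as a sanity check.

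Next I would apply this identity with $g_i = R_i$ for $1 \le i \le r_1$ and $g_{r_1+j} = e^{x} R_{r_1+j}$ for $1 \le j \le r_2$, so that $\Omega = e^{-r_2 x}\Wr[g_1, \ldots, g_r]$ as in \eqref{eq:defOmega2}. By the product rule,
\begin{equation*}
	\frac{d}{dx}\Omega = -r_2\, e^{-r_2 x}\Wr[g_1, \ldots, g_r] + e^{-r_2 x}\,\frac{d}{dx}\Wr[g_1, \ldots, g_r].
\end{equation*}
For the indices $i \le r_1$ the differentiated term $g_i' = R_i'$ contributes, after restoring the factor $e^{-r_2 x}$, exactly $\Omega(R_1, \ldots, R_i', \ldots, R_{r_1}; R_{r_1+1}, \ldots, R_r)$. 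For the indices $r_1 + j$ I would use $g_{r_1+j}' = e^{x} R_{r_1+j} + e^{x} R_{r_1+j}'$ together with multilinearity of the determinant to split the corresponding Wronskian into two pieces: one in which the $(r_1+j)$-th function is the original $e^{x} R_{r_1+j}$, reproducing $\Wr[g_1, \ldots, g_r]$, and one in which it is $e^{x} R_{r_1+j}'$, yielding $\Omega(R_1, \ldots, R_{r_1}; \ldots, R_{r_1+j}', \ldots, R_r)$ after restoring the factor.

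The final step is bookkeeping of the spurious terms, and this is the only genuinely delicate point. Summing the reproducing pieces over $j = 1, \ldots, r_2$ produces $r_2\, e^{-r_2 x}\Wr[g_1, \ldots, g_r] = r_2\,\Omega$, which cancels exactly against the $-r_2\, e^{-r_2 x}\Wr[g_1, \ldots, g_r]$ coming from differentiating the prefactor. What remains is precisely the sum of the $r$ terms $\Omega(\ldots, R_k', \ldots)$ claimed in \eqref{eq:Omegaprime}. The cancellation is exactly why the prefactor must be $e^{-r_2 x}$, namely one factor $e^{-x}$ for each of the $r_2$ functions carrying an $e^{x}$, so I would take care that the multilinearity split and the product rule are applied to the same set of $r_2$ functions so that the count matches. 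Everything else is routine, and in particular no distinctness of the degrees is used, consistent with the hypotheses of the lemma.
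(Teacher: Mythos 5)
Your proof is correct and follows essentially the same route as the paper: apply the product rule to the prefactor $e^{-r_2x}$, differentiate the Wronskian row by row, and use multilinearity to split $\left(e^{x}R_{r_1+j}\right)' = e^{x}R_{r_1+j} + e^{x}R_{r_1+j}'$ so that the $r_2$ reproduced copies of $\Omega$ cancel the $-r_2\Omega$ from the prefactor. The paper's own proof is just a more compact write-up of exactly this computation.
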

\begin{proof}
	Since the Wronskian is multilinear in its arguments, we can compute
	from \eqref{eq:defOmega2}
	\begin{align*}
	\frac{d}{dx} \Omega
	=& -r_2 \Omega+ e^{-r_2x} \cdot 
	\frac{d}{dx} \Wr\left[R_1,\dots,R_{r_1},e^{x} R_{r_1+1},\dots,e^{x} R_{r}\right] \\
	=& -r_2 \Omega+
	\Omega(R_1',\dots,R_{r_1}; R_{r_1+1},\dots,R_{r}) + \cdots +
	\Omega(R_1,\dots,R_{r_1}'; R_{r_1+1},\dots, R_{r}) \\
	&+ \Omega(R_1,\dots,R_{r_1};R_{r_1+1} + R_{r_1+1}',\dots, R_{r})  + \cdots
	+ \Omega(R_1,\dots,R_{r_1}; R_{r_1+1},\dots, R_{r} + R_r').
	\end{align*}
	From the multilinearity of $\Omega$ with respect to each of its arguments,
	we then obtain \eqref{eq:Omegaprime}.
\end{proof}

As a second preparation for the proof of Proposition \eqref{prop:degwr}
we state and prove an identity that will be used to establish the formula 
\eqref{eq:lcOmega} for the leading coefficient of $\Omega$. We need the identity \eqref{eq:lem:lcoeff} for natural numbers, but
as the proof shows it is valid for arbitrary real or complex numbers.
\begin{lemma}\label{lem:lcoeff}
	Let $x_1,\dots,x_r$ be $r$ pairwise different numbers, i.e. $x_i\neq x_j$ for $i\neq j$. Then the following holds,
	\begin{equation}\label{eq:lem:lcoeff}
	\sum_{k=1}^{r}\left( x_k \prod_{\shortstack{$\scriptstyle j=1 $ \\ $ \scriptstyle j\neq k$}}^{r} \frac{x_j-(x_k-1)}{x_j-x_k} \right)
	= \sum_{k=1}^{r} x_k - \binom{r}{2}.
	\end{equation}
\end{lemma}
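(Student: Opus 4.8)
The plan is to recognize the product in \eqref{eq:lem:lcoeff} as a Lagrange interpolation basis polynomial evaluated at a shifted node, and then to evaluate the resulting sum by a residue (partial fraction) computation. First I would introduce the nodal polynomial $P(t) = \prod_{j=1}^{r}(t-x_j)$, so that $P'(x_k) = \prod_{j\neq k}(x_k - x_j)$. Multiplying numerator and denominator of each factor by $-1$ gives
\[
\prod_{\substack{j=1\\ j\neq k}}^{r}\frac{x_j-(x_k-1)}{x_j-x_k}
= \prod_{\substack{j=1\\ j\neq k}}^{r}\frac{(x_k-1)-x_j}{x_k-x_j}
= \frac{\prod_{j\neq k}\bigl((x_k-1)-x_j\bigr)}{P'(x_k)}
= -\frac{P(x_k-1)}{P'(x_k)},
\]
where the last equality uses $P(x_k-1) = (x_k-1-x_k)\prod_{j\neq k}\bigl((x_k-1)-x_j\bigr)$ and $x_k-1-x_k=-1$. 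Hence the left-hand side of \eqref{eq:lem:lcoeff} equals $-\sum_{k=1}^{r} x_k\,P(x_k-1)/P'(x_k)$.

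The key observation is that $x_k\,P(x_k-1)/P'(x_k)$ is precisely the residue at the simple pole $t=x_k$ of the rational function
\[
F(t) = \frac{t\,P(t-1)}{P(t)},
\]
so the left-hand side of \eqref{eq:lem:lcoeff} equals $-\sum_{k=1}^{r}\operatorname{Res}_{t=x_k}F(t)$. Since the only finite poles of $F$ are the $x_k$, the sum of these residues equals the coefficient of $t^{-1}$ in the Laurent expansion of $F$ at infinity. I would extract this by long division: writing $F(t) = (t-r) + \rho(t)/P(t)$ with $\deg\rho\le r-1$ and $P$ monic, the desired coefficient is simply the leading coefficient of $\rho$, i.e.\ the coefficient of $t^{r-1}$ in $\rho(t) = t\,P(t-1) - (t-r)P(t)$.

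It then remains to compute that coefficient. Expanding $P(t)$ and $P(t-1)$ through order $t^{r-2}$ in terms of the elementary symmetric functions $e_1 = \sum_j x_j$ and $e_2 = \sum_{i<j} x_i x_j$, and using $\sum_{i<j}(x_i+1)(x_j+1) = e_2 + (r-1)e_1 + \binom{r}{2}$, a short calculation shows that the $t^{r+1}$ and $t^r$ terms of $\rho$ cancel (this is what fixes the quotient as $t-r$) and that the $t^{r-1}$ coefficient of $\rho$ equals $\binom{r}{2} - e_1$. Therefore $\sum_{k}\operatorname{Res}_{t=x_k}F = \binom{r}{2} - e_1$, and the left-hand side of \eqref{eq:lem:lcoeff} equals $e_1 - \binom{r}{2} = \sum_{k=1}^{r} x_k - \binom{r}{2}$, as claimed.

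The main obstacle is purely this bookkeeping of the second-order symmetric-function coefficients, which is exactly where the binomial term $\binom{r}{2}$ is produced; the degree-one and degree-zero coefficients only serve to confirm the quotient $t-r$. Everything goes through verbatim over $\mathbb{R}$ or $\mathbb{C}$, since the argument is an identity of rational functions and never uses that the $x_k$ are integers. I would also record a quick check at $r=1$ and $r=2$ to confirm the sign conventions before committing to the general computation.
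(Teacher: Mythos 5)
Your proof is correct, but it takes a genuinely different route from the one in the paper. The paper proceeds by induction on $r$: it isolates the $k=r$ term, performs a partial fraction decomposition of $x_r\prod_{j=1}^{r-1}\frac{x_j-(x_r-1)}{x_j-x_r}$ viewed as a rational function of the single variable $x_r$, and after some rearrangement reduces the $r$-term identity to the $(r-1)$-term one. You instead treat all terms at once: after rewriting the $k$th summand as $-\operatorname{Res}_{t=x_k}\bigl(tP(t-1)/P(t)\bigr)$ with $P(t)=\prod_j(t-x_j)$, the whole left-hand side becomes minus the sum of the finite residues of a single rational function, which equals minus the coefficient of $t^{-1}$ at infinity, extracted by one division with remainder. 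I checked the bookkeeping: the coefficient of $t^{r-2}$ in $P(t-1)$ is $e_2+(r-1)e_1+\binom{r}{2}$, the quotient is indeed $t-r$, and the leading coefficient of $\rho(t)=tP(t-1)-(t-r)P(t)$ is $\binom{r}{2}-e_1$, giving the claimed value $e_1-\binom{r}{2}$ after the sign flip. What your approach buys is the elimination of the induction and of the somewhat delicate manipulation in the paper where the factor $j=r$ is adjoined to or removed from the products; the global residue identity does that automatically. What the paper's approach buys is that it stays entirely within elementary algebraic manipulation of finite sums (no appeal to the sum-of-residues theorem, though of course that theorem is itself elementary for rational functions). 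Both arguments, as you note, are identities of rational functions valid for arbitrary pairwise distinct complex $x_k$, which is the generality the paper explicitly records.
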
	
\begin{proof}
	We prove this by induction on $r$. When $r=1$, the identity is trivially true. So take $r>1$. We claim that
	\begin{equation}\label{eq:lcoeff1}
	\sum_{k=1}^{r} x_k \prod\limits_{\shortstack{$\scriptstyle j=1 $ \\ $ \scriptstyle j\neq k$}}^{r} \frac{x_j-x_k+1}{x_j-x_k}
	= \sum_{k=1}^{r-1} x_k \prod\limits_{\shortstack{$\scriptstyle j=1 $ \\ $ \scriptstyle j\neq k$}}^{r-1} \frac{x_j-x_k+1}{x_j-x_k} + x_r - (r-1)
	\end{equation}
	from which the identity \eqref{eq:lem:lcoeff} follows by applying the induction hypothesis on the right-hand side of the equality. So it remains to show that \eqref{eq:lcoeff1} holds true. We start by splitting the sum into two parts
	\begin{equation}\label{eq:lcoeff2}
	\sum_{k=1}^{r} x_k \prod\limits_{\shortstack{$\scriptstyle j=1 $ \\ $ \scriptstyle j\neq k$}}^{r} \frac{x_j-x_k+1}{x_j-x_k}
	= x_r \prod\limits_{j=1}^{r-1} \frac{x_j-x_r+1}{x_j-x_r} + \sum_{k=1}^{r-1} x_k \prod\limits_{\shortstack{$\scriptstyle j=1 $ \\ $ \scriptstyle j\neq k$}}^{r} \frac{x_j-x_k+1}{x_j-x_k}.
	\end{equation}
	Consider $x_r$ as a variable, we then have the partial fraction decomposition
	\begin{equation}\label{eq:partfracdec}
	x_r \prod_{j=1}^{r-1} \frac{x_j-(x_r-1)}{x_j-x_r}
	= Bx_r+C + \sum_{k=1}^{r-1} \frac{A_k}{x_r-x_k}
	\end{equation}
	for some non-zero constants $B,C$ and
	\begin{equation*}
	A_k = -x_k \frac{\prod\limits_{j=1}^{r-1} (x_j-x_k+1)}{\prod\limits_{\shortstack{$\scriptstyle j=1 $ \\ $ \scriptstyle j\neq k$}}^{r-1}(x_j-x_k)}, \qquad k=1,\dots,r-1.
	\end{equation*}
	From the fact that
	\begin{equation*}
	\frac{x_j-(x_r-1)}{x_j-x_r} 
	= 1 + \frac{1}{x_j-x_r},
	\end{equation*}
	we see that
	\begin{equation*}
	\prod_{j=1}^{r-1} \frac{x_j-(x_r-1)}{x_j-x_r}
	= 1 + \sum_{j=1}^{r-1} \frac{1}{x_j-x_r} + O\left(\frac{1}{x_r^2}\right)
	\end{equation*}
	and thus $B=1$ and $C=-(r-1)$. Therefore, we can rewrite \eqref{eq:partfracdec} as
	\begin{equation}\label{eq:partfracdec2}
	x_r \prod_{j=1}^{r-1} \frac{x_j-(x_r-1)}{x_j-x_r} = -\sum_{k=1}^{r-1} x_k \frac{\prod\limits_{j=1}^{r-1} (x_j-x_k+1)}{\prod\limits_{\shortstack{$\scriptstyle j=1 $ \\ $ \scriptstyle j\neq k$}}^{r}(x_j-x_k)}
	+ x_r - (r-1).
	\end{equation}
	If we now look at the product $\prod\limits_{j=1}^{r-1} (x_j-x_k+1)$, it is clear that this is the same as $j$ runs from $1$ to $r-1$ excluding $k$ as for $j=k$ we have that $x_j+x_k+1=1$. Moreover, as 
	\begin{equation*}
	\frac{1}{x_r-x_k+1}
	= 1 - \frac{x_r-x_k}{x_r-x_k+1},
	\end{equation*}
	by adding $j=r$ in the product, we can rewrite \eqref{eq:partfracdec2} as
	\begin{equation*}
	x_r \prod_{j=1}^{r-1} \frac{x_j-(x_r-1)}{x_j-x_r} 
	= - \sum_{k=1}^{r-1} x_k \prod\limits_{\shortstack{$\scriptstyle j=1 $ \\ $ \scriptstyle j\neq k$}}^{r} \frac{x_j-x_k+1}{x_j-x_k} + \sum_{k=1}^{r-1} x_k \prod\limits_{\shortstack{$\scriptstyle j=1 $ \\ $ \scriptstyle j\neq k$}}^{r-1} \frac{x_j-x_k+1}{x_j-x_k} + x_r - (r-1).
	\end{equation*}
	If we plug this value in \eqref{eq:lcoeff2} we obtain \eqref{eq:lcoeff1} which ends the proof of the lemma.
\end{proof}

Now we are ready for the proof of Proposition \ref{prop:degwr}.

\begin{proof}[Proof of Proposition \ref{prop:degwr}]
	Let $R_1, \ldots, R_r$ be an arbitrary sequence of monic polynomials, and let 
	$\Omega = \Omega(R_1, \dots, R_{r_1}, R_{r_1+1}, \dots, R_r )$ be as in
	\eqref{eq:defOmega}. We are going to prove that
	\begin{equation} \label{eq:degOmegaineq} 
	\deg \Omega \leq \sum_{i=1}^r \deg R_i - \binom{r_1}{2}-\binom{r_2}{2} 
	\end{equation}
	with equality if and only if the degree condition of Proposition \ref{prop:degwr}
	is satisfied. In that case we show that the leading coefficient is
	given by \eqref{eq:lcOmega}.
	If $\Omega \equiv 0$ then we take $\deg \Omega = -\infty$.
	
	If $R_1, \ldots, R_{r_1}$ or $R_{r_1+1}, \ldots, R_r$ are linearly
	dependent then $\Omega \equiv 0$ and then \eqref{eq:degOmegaineq} is
	automatically satisfied. So we assume that $R_1, \ldots, R_{r_1}$
	are linearly independent, as well as $R_{r_1+1}, \ldots, R_r$.
	By permuting entries in the Wronskian we may also assume that
	\begin{equation} \label{eq:Riorder} 
	\begin{aligned} 
	&\deg R_i \leq \deg R_{i+1}, & \qquad \text{ for } i = 1, \ldots, r_1-1, \\
	&\deg R_{r_1+i} \leq \deg R_{r_1+i+1}, & \qquad \text{ for }  i =1, \ldots, r_2-1.
	\end{aligned} 
	\end{equation}
	Another choice would be that the degrees are decreasing as we did so far. However the choice we make does not influence the result of the proposition as the leading coefficient \eqref{eq:lcOmega} captures this choice.
	
	We will then use induction on the number 
	$N=\sum\limits_{k=1}^r \deg R_k - \binom{r_1}{2} - \binom{r_2}{2}$.
	Under the above assumptions the smallest possible number $N= 0$ is reached when
	$\deg R_i = i-1$ for $i =1, \ldots, r_1-1$ and
	$\deg R_{r_1+i} = i-1$ for $i=1, \ldots, r_2 -1$.
	The first $r_1$ columns in the Wronskian \eqref{eq:defOmega} then have
	an upper triangular form and we find by expanding
	\[ \Omega = \left(\prod_{i=1}^{r_1} (\deg R_i)! \right) e^{-r_2x} \cdot 
	\Wr \left[e^x S_1, \dots, e^x S_{r_2} \right] \]
	where $S_j =  e^{-x} \left( \frac{d}{dx} \right)^{r_1} \left(e^x R_{r_1+j} \right)$.
	Note that $S_j$ is a monic polynomial with $\deg S_j = \deg R_{r_1 +j} = j-1$
	for $j=1, \ldots, r_2$.
	Using \eqref{eq:Wr1}, we have 
	\[ e^{-r_2x} \cdot  \Wr \left[e^x S_1, \dots, e^x S_{r_2} \right]
	= \Wr \left[S_1, \dots, S_{r_2} \right] = \prod_{j=1}^{r_2} (\deg R_{r_1+j}) !.
	\]
	Thus
	\begin{equation*}
	\Omega = \prod_{i=1}^{r_1} \left(\deg R_i \right)! \prod_{j=1}^{r_2} \left(\deg R_{r_1+j} \right)!
	\end{equation*}
	which is a constant, so that the degree condition \eqref{eq:degOmega} is satisifed.
	Also the constant is equal to \eqref{eq:lcOmega}, as can be easily verified.
	This completes the proof of the base step of the induction. 
	
	In the induction step we take $N > 0$
	and we assume that the statement is true whenever the sum of the degrees 
	of the polynomials is at most $N-1 + \binom{r_1}{2} + \binom{r_2}{2}$.
	
	We take polynomials $R_1, \ldots, R_r$ with 
	$\sum\limits_{k=1}^r R_k = N + \binom{r_1}{2} + \binom{r_2}{2}$. We assume
	$R_1, \ldots, R_{r_1}$ and $R_{r_1+1}, \ldots, R_{r_2}$ are linearly independent
	and without loss of generality we also assume \eqref{eq:Riorder}.
	If equality holds somewhere in \eqref{eq:Riorder} then we perform a column operation
	on the Wronskian to reduce the degree of one of the polynomials. Then from
	the induction hypothesis it follows that  \eqref{eq:degOmegaineq}  holds 
	with strict inequality. The coefficient of $x^N$ is thus zero, and this 
	agrees with the formula \eqref{eq:lcOmega}.
	
	Thus we may assume
	\begin{equation} \label{eq:Riorder2} 
	\begin{aligned} 
	&\deg R_i < \deg R_{i+1}, & \qquad \text{ for } i = 1, \ldots, r_1-1, \\
	&\deg R_{r_1+i} < \deg R_{r_1+i+1}, & \qquad \text{ for }  i =1, \ldots, r_2-1.
	\end{aligned} 
	\end{equation}
	The identity \eqref{eq:Omegaprime} expresses 
	$\frac{d}{dx}\Omega(R_1, \ldots, R_{r_1}; R_{r_1+1}, \ldots, R_{r_2})$ as as sum of $r$ terms,
	each of which is an $\Omega$-polynomial built out of polynomials whose total
	degree is $N-1 + \binom{r_1}{2} + \binom{r_2}{2}$. According to the induction 
	hypothesis the degree of each of these terms is at most $N-1$. Thus 
	$\deg \Omega' \leq N-1$ and  \eqref{eq:degOmegaineq} follows after an integration step. 
	
	To determine the coefficient of $x^N$, we first compute the coefficient of $x^{N-1}$
	in each of the terms in the right-hand side of \eqref{eq:Omegaprime}.
	We denote $x_k = \deg R_k$ for $k =1, \ldots, r_1$,
	and $y_k = \deg R_{r_1+k}$ for $k = 1, \ldots, r_2$.
	By the induction hypothesis, the coefficient of $x^{N-1}$ of the $k$th term is
	\begin{equation} \label{eq:xkterm} 
	x_k \prod_{j =1 \atop j \neq k}^{r_1} \frac{x_j - (x_k-1)}{x_j-x_k} 
	\prod_{1 \leq i < j \leq r_1} (x_j-x_i) \prod_{1 \leq i < j \leq r_2} (y_j - y_i),
	\qquad k =1, \ldots, r_1, 
	\end{equation}
	while the coefficient for the $(r_1+k)$th term is
	\begin{equation} \label{eq:ykterm} 
	y_k \prod_{j =1 \atop j \neq k}^{r_1} \frac{y_j - (y_k-1)}{y_j-y_k} 
	\prod_{1 \leq i < j \leq r_1} (x_j-x_i) \prod_{1 \leq i < j \leq r_2} (y_j - y_i),
	\qquad k =1, \ldots, r_2. 
	\end{equation}
	Note that \eqref{eq:xkterm} is also valid if $\deg R_1 = 0$, or 
	if $\deg R_k' = \deg R_{k-1}$,   for some $k=2, \ldots, r_1$, since  
	in these cases  \eqref{eq:xkterm} vanishes, as it should.  
	Likewise, \eqref{eq:ykterm} is also valid if $\deg R_{r_1+1} = 0$, or
	if $\deg R_{r_1+k}' = \deg R_{r_1+k}$ for some $k=2, \ldots, r_2$.
	
	Adding \eqref{eq:xkterm} and \eqref{eq:ykterm}, and using Lemma \ref{lem:lcoeff} 
	we find that the coefficient of $x^{N-1}$ in $\Omega'$ is
	\[ \left( \sum_{k=1}^{r_1} x_k - \binom{r_1}{2}
	+ \sum_{k=1}^{r_2} y_k - \binom{r_2}{2} \right)
	\prod_{1 \leq i < j \leq r_1} (x_j-x_i) \prod_{1 \leq i < j \leq r_2} (y_j - y_i).
	\]
	Now recall 
	\[ 
	\sum_{k=1}^{r_1} x_k + \sum_{k=1}^{r_2} y_k
	= \sum_{k=1}^r \deg R_k = N + \binom{r_1}{2} + \binom{r_2}{2}. \]
	Therefore the coefficient of $x^{N-1}$ of $\Omega'$ is
	\[ N  \prod_{1 \leq i < j \leq r_1} (x_j-x_i) \prod_{1 \leq i < j \leq r_2} (y_j - y_i),
	\]
	and after integration we find that the coefficient of $x^N$ of $\Omega$
	is equal to \eqref{eq:lcOmega}. The coefficient is non-zero and therefore
	$\Omega$ has degree $N$ as claimed in \eqref{eq:degOmega}. 
	This completes the proof of the induction step, and Proposition \ref{prop:degwr}
	is proved.
\end{proof}

Using similar ideas we can prove the following proposition
that we will use in Lemma \ref{lem:XLPLinCom}. It applies
to a more general situation, but it only gives an upper bound on
the degree of the polynomial, and it does not give information on the
leading coefficient.

\begin{proposition}\label{prop:degdet}
Take two non-negative integers $r_1,r_2$ and define $r=r_1+r_2$.	
Let $R_1,\dots,R_r$ be polynomials. 
Take non-negative integers $0 \leq l_1<l_2<\dots<l_r$. Consider the polynomial
\begin{equation} \label{eq:defQ}
	Q(x) = e^{-r_2x} 
	\begin{vmatrix}
	R_1^{(l_1)} & \cdots & R_{r_1}^{(l_1)} & \left(e^{x}R_{r_1+1}\right)^{(l_1)} \cdots & \left(e^{x}R_{r}\right)^{(l_1)} \\
	R_1^{(l_2)} & \cdots & R_{r_1}^{(l_2)} & \left(e^{x}R_{r_1+1}\right)^{(l_2)} \cdots & \left(e^{x}R_{r}\right)^{(l_2)} \\
	\vdots & \vdots & \ddots & \vdots \\
	R_1^{(l_r)} & \cdots & R_{r_1}^{(l_r)} & \left(e^{x}R_{r_1+1}\right)^{(l_r)} \cdots & \left(e^{x}R_{r}\right)^{(l_r)} \\
	\end{vmatrix}.
\end{equation}
Then the degree of $Q$ is at most $\sum\limits_{i=1}^{r} \deg R_{i} - \sum\limits_{i=1}^{r_1}l_i - \binom{r_2}{2}$.
\end{proposition}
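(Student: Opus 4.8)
The plan is to strip off the exponential factors, reduce the determinant to a signed sum of products of two smaller determinants via a block Laplace expansion, and bound each piece separately. Writing $D=\frac{d}{dx}$ and using the operator identity $e^{-x}D^{l}(e^{x}R)=(1+D)^{l}R$, the prefactor $e^{-r_2 x}$ exactly cancels the $e^{x}$ pulled out of each of the last $r_2$ columns, so that
\begin{equation*}
	Q = \det \widetilde M, \qquad
	\widetilde M_{a,b} =
	\begin{cases}
		D^{l_a} R_b, & 1\le b\le r_1,\\
		(1+D)^{l_a} R_b, & r_1+1\le b\le r.
	\end{cases}
\end{equation*}
I would then expand $\det\widetilde M$ along the first $r_1$ columns, writing $Q$ as a signed sum over $r_1$-element row-subsets $S\subseteq\{1,\dots,r\}$ of products
\[
	\det\bigl(D^{l_a}R_b\bigr)_{a\in S,\,1\le b\le r_1}\cdot
	\det\bigl((1+D)^{l_a}R_b\bigr)_{a\in S^{c},\,r_1< b\le r}.
\]

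The first (``polynomial'') factor is a generalized Wronskian of $R_1,\dots,R_{r_1}$ with derivative orders $\{l_a\}_{a\in S}$; expanding it by the Leibniz rule, every term is a product $\prod_{a\in S}D^{l_a}R_{\tau(a)}$ over a bijection $\tau$, so its degree is at most $\sum_{b\le r_1}\deg R_b-\sum_{a\in S}l_a$, and here no cancellation is needed. Since $l_1<\dots<l_r$, every admissible $S$ satisfies $\sum_{a\in S}l_a\ge l_1+\dots+l_{r_1}$. Hence, once the second factor is controlled, combining the two bounds and taking the worst $S$ yields exactly the asserted degree.

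The heart of the matter — and the step I expect to be the main obstacle — is the \emph{exponential block}: for polynomials $P_1,\dots,P_s$ and arbitrary non-negative integers $m_1,\dots,m_s$ I must show
\begin{equation*}
	\deg\det\bigl((1+D)^{m_i}P_j\bigr)_{1\le i,j\le s}
	\ \le\ \sum_{j=1}^{s}\deg P_j-\binom{s}{2}.
\end{equation*}
The naive per-term bound is useless here: the leading term of $(1+D)^{m_i}P_j$ is simply that of $P_j$, \emph{independent} of $m_i$, so to top order all rows of a given column agree and the entire loss of $\binom{s}{2}$ must come from cancellation. To capture it I would grade by powers of $x$. A direct computation shows that, for $P=\sum_p a_p x^p$, the coefficient of $x^{q}$ in $(1+D)^{m}P$ equals $\sum_{p\ge q}a_p\binom{m}{p-q}\frac{p!}{q!}$, a polynomial in the variable $m$ of degree at most $\deg P-q$. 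Expanding the determinant multilinearly over this $x$-grading, the coefficient of $x^{Q}$ becomes a sum, over column profiles $(\tilde q_1,\dots,\tilde q_s)$ with $\sum_j\tilde q_j=Q$, of determinants $\det\bigl(\phi_j(m_i)\bigr)_{i,j}$ in which $\phi_j$ is a polynomial in $m$ of degree at most $\delta_j:=\deg P_j-\tilde q_j$. Writing each $\phi_j$ in the monomial basis and expanding once more, such a determinant is a combination of Vandermonde determinants $\det(m_i^{t_j})$, which vanish unless the exponents $t_j\le\delta_j$ are pairwise distinct. But $s$ distinct non-negative integers satisfy $\sum_j t_j\ge\binom{s}{2}$, forcing $\sum_j\delta_j\ge\binom{s}{2}$, i.e.\ $Q=\sum_j\deg P_j-\sum_j\delta_j\le\sum_j\deg P_j-\binom{s}{2}$. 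Thus the coefficient of $x^{Q}$ vanishes for larger $Q$, which is the block estimate; note it requires neither distinctness of the $m_i$ nor the exact values $l_a$, only that they are non-negative integers.

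Putting the two estimates together, each Laplace term has degree at most $\sum_{b}\deg R_b-\sum_{a\in S}l_a-\binom{r_2}{2}\le\sum_b\deg R_b-\sum_{i=1}^{r_1}l_i-\binom{r_2}{2}$, which is the claim. The degenerate cases $r_1=0$ and $r_2=0$ are then covered automatically (an empty determinant contributes $1$, and the polynomial block reduces to the pure generalized-Wronskian bound). As an alternative, one could instead run the induction on the total degree used for Proposition~\ref{prop:degwr}, differentiating $Q$ via an analogue of \eqref{eq:Omegaprime} and tracking leading coefficients; but the Laplace-plus-Vandermonde route above isolates the source of the $\binom{r_2}{2}$ cleanly and avoids recomputing leading coefficients, which is appropriate since Proposition~\ref{prop:degdet} only asserts an upper bound.
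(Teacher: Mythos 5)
Your proof is correct, but it follows a genuinely different route from the paper's. The paper handles Proposition~\ref{prop:degdet} in two lines by rerunning the induction on the total degree from the proof of Proposition~\ref{prop:degwr}: one differentiates $Q$, notes the analogue of \eqref{eq:Omegaprime} in which the derivative falls on one $R_i$ at a time, applies the induction hypothesis to each of the $r$ resulting terms, and integrates. Your argument is non-inductive: after the reduction $e^{-x}\frac{d^{l}}{dx^{l}}\left(e^{x}R\right)=\left(1+\frac{d}{dx}\right)^{l}R$, you Laplace-expand along the first $r_1$ columns and bound the two blocks separately --- the polynomial block by the trivial permutation-expansion bound, which is where $\sum_{i=1}^{r_1}l_i$ is harvested (using $l_1<\cdots<l_r$ to pass from $\sum_{a\in S}l_a$ to $\sum_{i=1}^{r_1}l_i$), and the exponential block by grading in powers of $x$ and reducing to generalized Vandermonde determinants $\det\left(m_i^{t_j}\right)$, whose nonvanishing forces $\sum_j t_j\geq\binom{s}{2}$ and hence the loss of $\binom{r_2}{2}$. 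The key computation checks out: the coefficient of $x^q$ in $(1+\frac{d}{dx})^{m}P$ is $\sum_{p\geq q}a_p\binom{m}{p-q}\frac{p!}{q!}$, a polynomial of degree at most $\deg P-q$ in $m$, and the rest of the block estimate follows as you describe, including the degenerate cases $r_1=0$ or $r_2=0$. What your approach buys is transparency: it isolates exactly where the $\binom{r_2}{2}$ comes from, needs no base case, no ordering of degrees, and no tracking of leading coefficients, and it yields the exponential-block bound for arbitrary, not necessarily distinct, derivative orders. What the paper's approach buys is uniformity: it recycles wholesale the machinery already built for Proposition~\ref{prop:degwr}, where the sharper conclusion (exact degree and leading coefficient) genuinely requires the inductive bookkeeping that your upper-bound-only argument can dispense with.
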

\begin{proof}
The proof is similar to (part of) the proof of Proposition \ref{prop:degwr}. 
We only need to observe, that similar to \eqref{eq:Omegaprime} we now have
\begin{multline*}
	Q'= Q(R'_1,\dots,R_{r_1};R_{r_1+1},\dots,R_{r}) 
	+ \cdots
	+ Q(R_1,\dots,R'_{r_1};R_{r_1+1},\dots,R_{r}) \\
	+ Q(R_1,\dots,R_{r_1};R'_{r_1+1},\dots,R_{r})
	+ \cdots
	+ Q(R_1,\dots,R_{r_1};R_{r_1+1},\dots,R'_{r})
\end{multline*}
with the (hopefully obvious) notation that
$Q(R_1,\dots,R_{r_1};R_{r_1+1},\dots,R_{r})$ is the polynomial
\eqref{eq:defQ} based on the polynomials $R_1, \ldots, R_r$, and with
the same sequence $l_1 < l_2 < \dots < l_r$. 
\end{proof}

\section{Construction of the generalized Laguerre polynomial}\label{sec:ConstructionGLP}
In this section, we will discuss the construction of the generalized Laguerre polynomial.
In particular, we show that it is sufficient to include only the first two types of
eigenfunctions of Table \ref{tab:1} in the Wronskian. To this end, we start from a
Wronskian including all four types and show that, up ta a constant, this equals 
a Wronskian containing only the first two types as in \eqref{eq:OmegaLaMu}. 
Hence in the general setup we start from eigenfunctions $f_1, \ldots, f_r$
of the Laguerre differential operator \eqref{eq:LagDV1} where 
\begin{align} 
f_j(x) & = L^{(\alpha)}_{n_j}(x), &&j=1,\dots,r_1, \label{app:fj1}\\
f_{r_1+j}(x) & = e^{x}L^{(\alpha)}_{m_j}(-x), &&j=1,\dots,r_2,\label{app:fj2}\\
f_{r_1+r_2+j}(x)&  = x^{-\alpha} L^{(-\alpha)}_{m'_j}(x), &&j=1,\dots,r_3,\label{app:fj3}\\
f_{r_1+r_2+r_3+j}(x) & = e^{x} x^{-\alpha} L^{(-\alpha)}_{n'_j}(-x), &&j=1,\dots,r_4, \label{app:fj4}
\end{align} 
with $r_1 + r_2 + r_3 + r_4 = r$, $n_1 > n_2 > \cdots > n_{r_1} \geq 0$,
$m_1 > m_2 > \cdots > m_{r_2} \geq 0$, $m_1' > m_2' > \cdots > m_{r_3}' \geq 0$,
and $n_1' > n_2' > \cdots > n_{r_1}' \geq 0$.
The result of this section is that there are partitions $\lambda$ and $\mu$,
an integer $t$, and a constant $C$ such that
\begin{equation*}
e^{-(r_2+r_4)x} x^{(\alpha + r_1 + r_2)(r_3+r_4)} \cdot 
\Wr \left[ f_1, f_2, \ldots, f_r \right](x) =
C	\Omega^{(\alpha-t)}_{\lambda, \mu}(x).
\end{equation*}
The fact that we only need two types of eigenfunctions instead of all four possibilities 
is essentially stated in for example \cite{Odake-a,Odake_Sasaki-a,Takemura}. 
However, explicit equalities were never derived. In \cite{Takemura}, Takemura discusses 
the reduction in the Jacobi case and concludes that similar methods must work for the
Laguerre case as well. We derive these explicit identities in this section. We use 
the same ideas to describe this procedure as in the recent paper 
\cite{GomezUllate_Grandati_Milson-a} where the authors discuss 
pseudo-Wronskians of Hermite polynomials.

To describe how $\lambda$, $\mu$ and $t$ are obtained from all the
indices it is useful to introduce Maya diagrams.

\subsection{Maya diagrams}
A Maya diagram $M$ is a subset of the integers that contains a finite number of 
positive integers and excludes a finite number of negative integers. We visualize it as 
an infinite row of boxes which are empty or filled. 
We order these boxes by corresponding them to the set of integers and therefore 
we define an origin. 
To the right of the origin, there are only finitely many filled boxes.
Each of these filled boxes corresponds to a non-negative integer $n\geq0$.
All filled boxes to the right of the origin are labelled by a finite
decreasing  sequence
\[ n_1 > n_2 > \cdots > n_{r_1} \geq 0,  \]
where $r_1$ is the number of filled boxes to the right of the origin. 
If $r_1=0$, then the sequence is empty.

To the left of the origin, there are only finitely many empty boxes.  
Each empty box corresponds to a negative integer $k<0$. We link this negative 
integer to a non-negative integer $n' = -k-1 \geq 0$. We obtain
a second finite decreasing sequence 
\[  n_1' >  n_2' >  \cdots > n_{r_4}' \geq 0,  \]
that labels the positions of the empty boxes to the left of the origin, and $r_4$
is the number of those boxes.  The Maya diagram is encoded by these two sequences
\begin{equation} \label{eq:Mayacoding}
M : \quad \left( n_1', n_2', \ldots, n_{r_4}' \mid n_1, n_2, \ldots, n_{r_1} \right).
\end{equation}

For example, consider the following Maya diagram $M$.
\begin{center}
	\begin{tikzpicture}
	\draw[step=1/2] (-5,0) grid (5,1/2);
	\node[] at (5.5,0.25) {$\dots$};
	\node[] at (5.5,-0.25) {empty boxes};
	\node[] at (-5.5,0.25) {$\dots$};
	\node[] at (-5.5,-0.25) {filled boxes};
	\draw[thick] (0,-1/2)--(0,1);
	\node[] at (0.25,0.75) {0};
	\node[] at (-0.25,0.75) {-1};
	\foreach \y in {0,3,4,5,7}
	\draw[black,fill=black] (\y /2 +0.25,1/4) circle (.5ex);
	\foreach \y in {0,1,4,7,8,9}
	\draw[black,fill=black] (-\y /2 -0.25,1/4) circle (.5ex);
	\end{tikzpicture}
\end{center}
The boxes on the right correspond to the finite decreasing sequence $(7,5,4,3,0)$. 
The empty boxes on the left likewise correspond to $(6,5,3,2)$, and
therefore  
\begin{equation*}
M : \quad \left( 6,5,3,2 \mid 7,5,4,3,0 \right).
\end{equation*}

\begin{remark} \label{rem:Mayalabels} 
	In Theorem \ref{thm:appA}, we extend the decreasing sequence $n_1 > n_2 > \cdots > n_{r_1}$
	to an infinite decreasing sequence $n_1 > n_2 > \cdots > n_{r_1} > n_{r_1+1} > n_{r_1+2} > \cdots$
	giving the positions of all the filled boxes. Thus if $j \geq r_1 + 1$,
	then $n_j < 0$. In our example we get $(7,5,4,3,0,-1,-2,-5,-8,-9,-10,\cdots)$.
\end{remark}

We say that a Maya diagram $\tilde{M}$ is equivalent to $M$ if $\tilde{M}$ is  
obtained from $M$ by moving the position of the origin. Hence, the sequence of 
filled and empty boxes remains unchanged. For example, the Maya diagram $\tilde{M}$
\begin{center}
	\begin{tikzpicture}
	\draw[step=1/2] (-5,0) grid (5,1/2);
	\node[] at (5.5,0.25) {$\dots$};
	\node[] at (5.5,-0.25) {empty boxes};
	\node[] at (-5.5,0.25) {$\dots$};
	\node[] at (-5.5,-0.25) {filled boxes};
	\draw[thick] (0+1.5,-1/2)--(0+1.5,1);
	\node[] at (0.25+1.5,0.75) {0};
	\node[] at (-0.25+1.5,0.75) {-1};
	\foreach \y in {0,3,4,5,7}
	\draw[black,fill=black] (\y /2 +0.25,1/4) circle (.5ex);
	\foreach \y in {0,1,4,7,8,9}
	\draw[black,fill=black] (-\y /2 -0.25,1/4) circle (.5ex);
	\end{tikzpicture}
\end{center}
is equivalent to $M$ as $\tilde{M}=M-3$, i.e., the origin is moved three steps 
to the right. In this example $\tilde{M}$ is given by
\begin{equation*}
\tilde{M} : \quad \left(9,8,6,5,1,0 \mid 4,2,1,0 \right).
\end{equation*}

It is clear from this example that the total length of both sequences
in the encoding of equivalent Maya diagrams need not be the same. 
The total length of $M$ is $9$, while  it is $10$ for $\tilde{M}$.

A canonical choice for the position of the origin is to 
have it such that all boxes to the left are filled, while the first
box to the right is empty. In the example it would be
\begin{center}
	\begin{tikzpicture}
	\draw[step=1/2] (-5,0) grid (5,1/2);
	\node[] at (5.5,0.25) {$\dots$};
	\node[] at (5.5,-0.25) {empty boxes};
	\node[] at (-5.5,0.25) {$\dots$};
	\node[] at (-5.5,-0.25) {filled boxes};
	\draw[thick] (-5+1.5,-1/2)--(-5+1.5,1);
	\node[] at (-5+0.25+1.5,0.75) {0};
	\node[] at (-5-0.25+1.5,0.75) {-1};
	\foreach \y in {0,3,4,5,7}
	\draw[black,fill=black] (\y /2 +0.25,1/4) circle (.5ex);
	\foreach \y in {0,1,4,7,8,9}
	\draw[black,fill=black] (-\y /2 -0.25,1/4) circle (.5ex);
	\end{tikzpicture}
\end{center}
with the encoding
\[ \tilde{M} =  M + 7 : \quad \left( \emptyset \mid 14, 12, 11, 10, 7, 6, 5, 2 \right). \]
Since there are no empty boxes on the left, we only have one decreasing sequence,
say $\left(\emptyset \mid n_1, n_2, \ldots, n_r \right)$ with $n_r \geq 1$.
It is associated with a partition as before, which we denote it by  
\begin{equation} \label{eq:lambdaM} 
\lambda = \lambda(M). 
\end{equation}
In our example, $\lambda(M) = \left( 7,6,6,6,4,4,4,2 \right)$.

We use 
\begin{equation} \label{eq:tM}
t = t(M) 
\end{equation}
to denote the shift $M \to \tilde{M} = M+t$ that we have to apply 
to $M$ to take it into this canonical form.  If $M$ is encoded by \eqref{eq:Mayacoding}
with $r_4 \geq 1$ then
\[ t(M) = n_1' + 1 \]
and in that case $t(M) \geq 1$. If $r_4 =0$, then $t(M) \leq 0$.

Another possible choice for the position of the origin is to have it in such
a way that the number of empty boxes to the left is the same as the number of
filled boxes to the right. It is easy to see that there is a unique such position
and in our example it is 
\begin{center}
	\begin{tikzpicture}
	\draw[step=1/2] (-5,0) grid (5,1/2);
	\node[] at (5.5,0.25) {$\dots$};
	\node[] at (5.5,-0.25) {empty boxes};
	\node[] at (-5.5,0.25) {$\dots$};
	\node[] at (-5.5,-0.25) {filled boxes};
	\draw[thick] (-1+1.5,-1/2)--(-1+1.5,1);
	\node[] at (-1+0.25+1.5,0.75) {0};
	\node[] at (-1-0.25+1.5,0.75) {-1};
	\foreach \y in {0,3,4,5,7}
	\draw[black,fill=black] (\y /2 +0.25,1/4) circle (.5ex);
	\foreach \y in {0,1,4,7,8,9}
	\draw[black,fill=black] (-\y /2 -0.25,1/4) circle (.5ex);
	\end{tikzpicture}
\end{center}
with encoding
\[ M - 1 : \left(7,6,4,3 \mid 6,4,3,2 \right). \]
This coincides with the Frobenius representation of the partition $\lambda(M)$.

We can also put the origin so that all the boxes to the right are empty,
and the first box to the left is filled. In the example it is 
\begin{center}
	\begin{tikzpicture}
	\draw[step=1/2] (-5,0) grid (5,1/2);
	\node[] at (5.5,0.25) {$\dots$};
	\node[] at (5.5,-0.25) {empty boxes};
	\node[] at (-5.5,0.25) {$\dots$};
	\node[] at (-5.5,-0.25) {filled boxes};
	\draw[thick] (2.5+1.5,-1/2)--(2.5+1.5,1);
	\node[] at (2.5+0.25+1.5,0.75) {0};
	\node[] at (2.5-0.25+1.5,0.75) {-1};
	\foreach \y in {0,3,4,5,7}
	\draw[black,fill=black] (\y /2 +0.25,1/4) circle (.5ex);
	\foreach \y in {0,1,4,7,8,9}
	\draw[black,fill=black] (-\y /2 -0.25,1/4) circle (.5ex);
	\end{tikzpicture}
\end{center}
This has an encoding $(n_1', n_2', \ldots, n_s' \mid \emptyset)$ with $n_s' \geq 1$. 
Then 
\[ \lambda'_j = n_j' - s + j, \qquad j =1, \ldots, s \]
is the partition that is conjugate to $\lambda$, see \eqref{eq:PartConj}.

This is all we need about Maya diagrams. A more thorough introduction to Maya diagrams can be found in for example \cite{GomezUllate_Grandati_Milson-a}. More information about the Frobenius representation is covered in \cite{Olsson}.

\subsection{The result} 
Now we are able to state the main result of this section about the construction of the generalized Laguerre polynomial. 

\begin{theorem}\label{thm:appA}
	Let $f_1, \ldots, f_r$ be as in  \eqref{app:fj1}-\eqref{app:fj4}, and let 
	\begin{equation} \label{eq:M1M2}
	\begin{aligned}
	M_1: & \quad \left( n'_1,\dots,n'_{r_4} \mid n_1,\dots,n_{r_1} \right), \\
	M_2: & \quad \left( m'_1,\dots,m'_{r_3} \mid m_1,\dots,m_{r_2} \right),
	\end{aligned}
	\end{equation} 
	be two Maya diagrams built out of the degrees of the Laguerre polynomials
	appearing in \eqref{app:fj1}-\eqref{app:fj4}. Let $\lambda = \lambda(M_1)$
	and $\mu = \lambda(M_2)$ be the two partitions that are associated with $M_1$ and $M_2$
	as described above, and let $t_1 = t(M_1)$, $t_2 = t(M_2)$ as in \eqref{eq:tM}.
	
	Then there is a constant $C = C(\alpha, M_1, M_2)$ such that
	\[  e^{-(r_2+r_4)x} x^{(\alpha + r_1+r_2)(r_3+r_4)} \cdot 
	\Wr \left[f_1, \ldots, f_r \right](x)
	= C \Omega^{(\alpha-t_1-t_2)}_{\lambda, \mu}(x) \]
	where the constant $C$ is given by
	\begin{multline} 
	C(\alpha, M_1, M_2) = (-1)^{d}
	\prod_{j=1}^{r_1} \prod_{k=1}^{r_3} (m_k' - \alpha - n_j) 
	\prod_{j=1}^{r_2} \prod_{k=1}^{r_4} (n_k' - \alpha - m_j) \\
	\times  \label{eq:constantC}
	\prod_{j=1}^{r_1} \prod_{k=1}^{r_4} (n_j + n_k' +1) 
	\prod_{j=1}^{r_2} \prod_{k=1}^{r_3} (m_j + m_k' + 1),
	\end{multline}
	and
	\begin{equation}\label{eq:defd}
	d=
	\begin{cases}
	d_0 &  \text{ if } r_3=r_4=0, \\
	d_0 + d_1  & \text{ if } r_3=0 \text{ and } r_4\geq 1, \\
	d_2  & \text{ if } r_3\geq 1 \text{ and } r_4=0, \\
	d_1+d_2 & \text{ if } r_3\geq 1 \text{ and } r_4\geq 1.
	\end{cases}
	\end{equation}
	with
	\begin{align*}
	&d_0 = |t_2| r_2 - \frac{|t_2|(|t_2|+1)}{2}, %\qquad l=\max\left(\{0\} \cup \{i+1 \mid m_{r_2-i} = i, 0\leq i \leq r_2-1 \}\right), 
	\\
	&d_1 = \frac{r_4(r_4-1)}{2} + r_1 r_4 + \frac{(n'_1+1-r_4)(n'_1+2+r_4)}{2} + \sum\limits_{i=1}^{n'_1+1-r_4} n_{r_1+i}, \\
	&d_2 = r_2(m'_1+1) + (m'_1+1)(m'_1+1-r_3) + 
	\sum\limits_{i=1}^{m'_1+1-r_3} m_{r_2+i}.
	\end{align*}
	Here we used the infinite decreasing sequences $(n_i)_{i=1}^{\infty}$ and $(m_i)_{i=1}^{\infty}$ which give 
	the positions of all filled boxes, see Remark \ref{rem:Mayalabels}.
\end{theorem}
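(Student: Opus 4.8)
The plan is to reduce the four types of eigenfunctions \eqref{app:fj1}--\eqref{app:fj4} to the first two by \emph{flipping} the two ``dual'' families carrying the factor $x^{-\alpha}$ (types~3 and~4), following the pseudo-Wronskian philosophy of \cite{GomezUllate_Grandati_Milson-a}. The bookkeeping is organized by the two Maya diagrams \eqref{eq:M1M2}: types~1 and~4 feed into $M_1$, while types~2 and~3 feed into $M_2$, and each flip is exactly the operation of moving a box in the corresponding diagram. The flips canonicalize $M_1$ and $M_2$ (all boxes filled to the left, first box to the right empty), at which point the surviving filled boxes are the parts of $\lambda = \lambda(M_1)$ and $\mu = \lambda(M_2)$ via \eqref{eq:lambdaM}, and the total displacement of the origins is $t_1 + t_2$ with $t_i = t(M_i)$ as in \eqref{eq:tM}. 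Since the two diagrams play symmetric roles under the substitution $x \mapsto -x$ together with the interchange of the two partitions, I would treat the $M_1$-side (elimination of the type-4 functions against the type-1 functions) in full and obtain the $M_2$-side by the mirror argument or directly from the duality of Lemma \ref{lem:GLP4}.

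The technical heart is a \emph{single-flip lemma}. In a Wronskian of eigenfunctions of the Laguerre operator \eqref{eq:LagDV1}, replacing one eigenfunction by the state obtained from flipping filled $\leftrightarrow$ empty at one box reproduces the Wronskian up to (i) a gauge factor that is a power of $x$ times $e^{\pm x}$, extracted cleanly by \eqref{eq:Wr1}, and (ii) a scalar equal to a product of eigenvalue differences between the flipped state and the remaining states. The point I would stress is that the four products in \eqref{eq:constantC} are precisely these eigenvalue differences, read off from Table \ref{tab:1}: for instance $n_j + n_k' + 1$ is (up to sign) the difference of the type-1 eigenvalue $-n_j$ and the type-4 eigenvalue $n_k'+1$, and similarly $m_k' - \alpha - n_j$, $n_k' - \alpha - m_j$ and $m_j + m_k' + 1$ are the remaining three kinds of differences. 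I would prove this single-flip identity by the same mechanism that produced \eqref{eq:OmegaLaMu2}, namely applying the differential identities \eqref{app:eq:diff1}--\eqref{app:eq:diff2} to turn the derivative rows into Laguerre polynomials of shifted parameter, and then performing column operations, together with a Sylvester-type determinantal reduction to expose the scalar factor.

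Assembling the flips, I would eliminate all $r_3$ type-3 and $r_4$ type-4 functions, lowering the parameter by the total shift $t_1 + t_2$ and arriving at $\Omega^{(\alpha - t_1 - t_2)}_{\lambda,\mu}$. The prefactor $x^{(\alpha + r_1 + r_2)(r_3+r_4)}$ is exactly what is consumed in clearing the $r_3 + r_4$ negative powers $x^{-\alpha}$ produced by the dual families, so that the right-hand side is a genuine polynomial; this can be cross-checked against the degree and leading-coefficient count of Proposition \ref{prop:degwr} and Lemma \ref{lem:GLP1}, which simultaneously forces the accumulated constant $C$ to be nonzero. The accumulated scalars from the individual flips multiply to the four products in \eqref{eq:constantC}.

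The main obstacle is the exact determination of the sign exponent $d$ in \eqref{eq:defd}, with its four-case split. Each flip reorders the list of eigenfunctions and hence introduces a permutation sign in the Wronskian, and the manner in which these signs accumulate depends on whether the relevant side of the diagram already carries empty or filled boxes, i.e.\ on whether $r_3$ and $r_4$ vanish; this is the source of the pieces $d_0$, $d_1$, $d_2$. I would carry the precise scalar \emph{and} sign as part of an inductive statement, inducting on the number of flips (equivalently on $r_3 + r_4$ together with the shift amounts), and I would use the infinite decreasing sequences $(n_i)$ and $(m_i)$ of Remark \ref{rem:Mayalabels} to express the boundary contributions $\sum_i n_{r_1+i}$ and $\sum_i m_{r_2+i}$ entering $d_1$ and $d_2$. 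Verifying that this sign-and-constant bookkeeping collapses to the closed forms \eqref{eq:constantC}--\eqref{eq:defd} is where the real work lies; by contrast, the conceptual four-to-two reduction is routine once the single-flip lemma is in hand.
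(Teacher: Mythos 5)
Your strategy is essentially the one the paper follows: the four families are reduced to the first two by shifting the origins of $M_1$ and $M_2$ one unit at a time until they reach canonical position, each unit shift being a Wronskian manipulation driven by the derivative identities \eqref{app:eq:diff1}--\eqref{app:eq:diff4} (this is Lemma \ref{app:lem:1} together with the shifting process of Section \ref{sec:ConstructionGLP}), after which the closed form \eqref{eq:constantC}--\eqref{eq:defd} of the accumulated constant is confirmed by an induction on $|t_1|+|t_2|$. Your observation that the four double products in \eqref{eq:constantC} are exactly the pairwise eigenvalue differences, read from Table \ref{tab:1}, between the eliminated states (types 3 and 4) and the retained states (types 1 and 2) is correct and is a pleasant interpretation of the final answer that the paper does not make explicit.

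However, the ``single-flip lemma'' as you state it is not what one elementary step actually yields, and in that literal form it would fail. A single reduction step --- say, eliminating the type-4 function with $n'_{r_4}=0$, as in \eqref{eq:reduction4} --- produces the scalar $(-1)^{r_1+r_2+r_4-1}\prod_{i}(n_i+1)\prod_{i}(m_i+\alpha)$, which involves only the type-1 and type-2 functions present \emph{at that moment}, not the remaining type-3 and type-4 functions; moreover, not all of the final type-1 (resp.\ type-2) functions are present yet, since filled boxes crossed by the moving origin add new ones as the shift proceeds, and those additions contribute \emph{reciprocals} of such factors (see \eqref{eq:reduction5}). The clean four-product structure of \eqref{eq:constantC} emerges only after substantial cancellation along the whole chain, which is precisely why the paper does not read the constant off step by step but verifies the closed form by a separate induction. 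Your stated plan to carry the precise scalar and sign through an induction on the number of flips would land you in the same place, so the defect is reparable, but the per-flip identity must be formulated with the weaker, step-dependent scalar rather than with the global eigenvalue-difference product.
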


The constant \eqref{eq:constantC} is non-zero, if and only if
\begin{align}\label{eq:alphavanishing}
\begin{split}
\alpha \neq m'_i-n_j, \qquad \text{for } i=1,\dots,r_3, \quad j=1,\dots,r_1, \\
\alpha \neq n'_i-m_j,	\qquad \text{for } i=1,\dots,r_4, \quad j=1,\dots,r_2.
\end{split}
\end{align}
Indeed, if $\alpha$ does not satisfy these conditions, $\Omega_{M_1,M_2}^{(\alpha)}$
vanishes identically since in such a case, two functions in the Wronskian 
are multiples of each other and therefore the Wronskian is zero.
If \eqref{eq:alphavanishing} is satisfied then $\Omega^{(\alpha)}_{M_1, M_2}$ 
is a polynomial of degree $|\lambda(M_1)|+|\lambda(M_2)|$.

We prove this theorem as follows. Firstly we show that there is a 
reduction possible when $0$ appears in the encoding of one of the Maya diagrams. 
Secondly we describe the shifting process to reduce $M_1$ to $M_1 + t_1$ and 
$M_2$ to $M_2 + t_2$. This is handled in the upcoming two subsections.

\begin{remark}
In \eqref{def:GLP}, we defined the generalized Laguerre polynomial using the first two 
types of eigenfunctions of Table \ref{tab:1}. Theorem \ref{thm:appA} tells us that if we 
use the four types of eigenfunctions, we can always reduce to the first two types of
eigenfunctions. 

One could make other choices, and shift the Maya diagrams in another way. Then the 
generalized Laguerre polynomial is reduced to only two types of eigenfunctions 
which are not necessarily the first two types as in \eqref{def:GLP}. For example
it is possible to reduce to type one and type three.  
However, one type of eigenfunctions should be from type one  \eqref{app:fj1} or
type four \eqref{app:fj4},  while the other should be from type two \eqref{app:fj2} or
type three \eqref{app:fj3}. The reductions to other combinations  
can be derived in the same way as Theorem \ref{thm:appA}.
\end{remark}

\subsection{Reduction procedure}
We use the notation
\begin{equation}\label{eq:OmegaGeneral}
\Omega^{(\alpha)}_{M_1, M_2}(x) =
e^{-(r_2+r_4)x} x^{(\alpha + r_1+r_2)(r_3+r_4)} \cdot \Wr \left[f_1, \ldots, f_r \right](x)
\end{equation}
where $f_1, \ldots, f_r$ are as in \eqref{app:fj1}-\eqref{app:fj4}, as before.
The following elementary properties of the derivatives will be used in 
the  proof of Lemma \ref{app:lem:1}
\begin{align}
\frac{d}{dx} \left( L^{(\alpha)}_{n}(x) \right) &= - L^{(\alpha+1)}_{n-1}(x), \label{app:eq:diff1} \\
\frac{d}{dx} \left( e^{x} L^{(\alpha)}_{n}(-x) \right) &= e^{x} L^{(\alpha+1)}_{n}(-x), \label{app:eq:diff2} \\
\frac{d}{dx} \left( x^{-\alpha} L^{(-\alpha)}_{n}(x)\right) &= (n-\alpha) x^{-\alpha-1} L^{(-\alpha-1)}_{n}(x), \label{app:eq:diff3} \\
\frac{d}{dx} \left( x^{-\alpha} e^{x} L^{(-\alpha)}_{n}(-x)\right) &= (n+1) x^{-\alpha-1} e^{x} L^{(-\alpha-1)}_{n+1}(-x). \label{app:eq:diff4}
\end{align}

A reduction is possible if $0$ appears in the encoding of one of the Maya diagrams.

\begin{lemma}\label{app:lem:1}
	Let $M_1$ and $M_2$ be given by \eqref{eq:M1M2}.
	\begin{enumerate}
		\item[\rm (a)] If $n_{r_1}=0$, then 
		\begin{equation} \label{eq:reduction1}
		\Omega^{(\alpha)}_{M_1,M_2}
		= \left( \prod_{i=1}^{r_3} (m'_i-\alpha) \prod_{i=1}^{r_4} (n'_i+1) \right) \Omega^{(\alpha+1)}_{M_1-1,M_2}.
		\end{equation}
		\item[\rm (b)] If $n'_{r_4}=0$, then
		\begin{equation} \label{eq:reduction4}
		\Omega^{(\alpha)}_{M_1,M_2}
		= (-1)^{r_1+r_2+r_4-1} \left( \prod_{i=1}^{r_1} (n_i+1)  
		\prod_{i=1}^{r_2} (m_i+\alpha) \right) \Omega^{(\alpha-1)}_{M_1+1,M_2}.
		\end{equation}
		\item[\rm (c)]  If $m_{r_2} = 0$, then
		\begin{equation} \label{eq:reduction2}
		\Omega^{(\alpha)}_{M_1,M_2}
		= (-1)^{r_2-1} \left( \prod_{i=1}^{r_3} (m_i'+1)  
		\prod_{i=1}^{r_4} (n_i' -\alpha) \right) \Omega^{(\alpha+1)}_{M_1,M_2-1}.
		\end{equation}
		\item[\rm (d)]  If $m'_{r_3} = 0$, then
		\begin{equation} \label{eq:reduction3}
		\Omega^{(\alpha)}_{M_1,M_2}
		= (-1)^{r_1+r_2} \left( \prod_{i=1}^{r_1} (n_i+\alpha)  
		\prod_{i=1}^{r_2} (m_i+1) \right) \Omega^{(\alpha-1)}_{M_1,M_2+1}.
		\end{equation}
	\end{enumerate}
\end{lemma}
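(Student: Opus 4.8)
The plan is to prove each of the four reductions by a single cofactor expansion of the Wronskian in \eqref{eq:OmegaGeneral}. In every case the hypothesis forces one of the functions $f_1,\dots,f_r$ to be a degree-$0$ Laguerre factor, hence an elementary function ($1$, $e^x$, $x^{-\alpha}$ or $e^xx^{-\alpha}$, according to its type). The idea is: first bring the row of this special function to the form $(\ast,0,\dots,0)$ by admissible column operations, then expand along that row. The surviving minor is the Wronskian of the images of the remaining functions under a fixed first-order operator $P$, and by the derivative formulas \eqref{app:eq:diff1}--\eqref{app:eq:diff4} together with the standard contiguous relations $L^{(\alpha)}_n=L^{(\alpha+1)}_n-L^{(\alpha+1)}_{n-1}$ and $xL^{(\beta)}_n=(n+\beta)L^{(\beta-1)}_n-(n+1)L^{(\beta-1)}_{n+1}$ these images are again single-type Laguerre functions with shifted degrees and parameter. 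Reassembling them into a new $\Omega$ and matching the prefactors $e^{-(r_2+r_4)x}x^{(\alpha+r_1+r_2)(r_3+r_4)}$ then yields the claimed formula; the constants and signs are collected along the way.

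I would carry out (a) first, as it is the cleanest. When $n_{r_1}=0$ we have $f_{r_1}=L^{(\alpha)}_0\equiv 1$, so with $P=\tfrac{d}{dx}$ the $r_1$-th row is already $(1,0,\dots,0)$. Expanding along it produces the sign $(-1)^{r_1+1}$ and the minor $\Wr[f_1',\dots,\widehat{f_{r_1}'},\dots,f_r']$. Applying \eqref{app:eq:diff1}--\eqref{app:eq:diff4} termwise turns the remaining type-$1$ functions into $-L^{(\alpha+1)}_{n_j-1}$ and the type-$4$ ones into $(n_j'+1)$ times (type $4$, parameter $\alpha+1$, degree $n_j'+1$), while the type-$2$ and type-$3$ functions only change parameter, with constants $1$ and $(m_j'-\alpha)$ respectively. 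These are exactly the functions defining $\Omega^{(\alpha+1)}_{M_1-1,M_2}$ in the same order, the $(-1)^{r_1+1}$ cancels the $(-1)^{r_1-1}$ coming from the type-$1$ factors, and since $(\alpha+1)+(r_1-1)+r_2=\alpha+r_1+r_2$ the prefactors agree verbatim; this gives \eqref{eq:reduction1}.

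For (c) the special function is $f_{r_1+r_2}=e^x$, whose derivatives are all $e^x$, so the row is not yet sparse. I would remove this by subtracting each column from the next (a determinant-preserving operation), which amounts to replacing the operator sequence $1,D,D^2,\dots$ by $1,\,D-1,\,D(D-1),\dots$; since $(D-1)e^x=0$ the row collapses to $(e^x,0,\dots,0)$. Expanding gives $e^x$ times the minor $\Wr[(D-1)f_i:\ i\neq r_1+r_2]$, and here the contiguous relations quoted above make $(D-1)$ act cleanly: it fixes the degrees of types $1$ and $4$ (with factors $-1$ and $n_j'-\alpha$), lowers the type-$2$ degree by one, and raises the type-$3$ degree by one with factor $m_j'+1$, all while sending $\alpha\mapsto\alpha+1$. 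This reproduces the functions of $\Omega^{(\alpha+1)}_{M_1,M_2-1}$; the leftover $e^x$ matches the extra $e^{x}$ in that polynomial's prefactor (its type-$2$ count is $r_2-1$), and the sign $(-1)^{(r_1+r_2)+1}$ combined with the $r_1$ factors $-1$ gives $(-1)^{r_2-1}$, which is \eqref{eq:reduction2}.

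The main obstacle is (b) and (d): here the parameter must \emph{decrease}, $\alpha\mapsto\alpha-1$, and no determinant-preserving first-order operator lowers the Laguerre parameter on all four types simultaneously (the natural $\alpha$-lowering identities, e.g.\ $\tfrac{d}{dx}[x^\alpha L^{(\alpha)}_n]=(n+\alpha)x^{\alpha-1}L^{(\alpha-1)}_n$, involve multiplication by a power of $x$ and spoil at least one type). My plan is to sidestep this by the reflection symmetry
\begin{equation*}
	\Omega^{(\alpha)}_{M_1,M_2}(x)=C'\,\Omega^{(-\alpha)}_{M_2^{*},M_1^{*}}(x),
\end{equation*}
obtained by multiplying every entry of the Wronskian in \eqref{eq:OmegaGeneral} by $x^{\alpha}$ and reading the result with parameter $-\alpha$: this turns types $1\leftrightarrow 3$ and $2\leftrightarrow 4$ into one another and replaces each Maya diagram by its reflection $M^{*}$ (equivalently, conjugates the associated partition), the constant $C'$ and the permutation sign being routine to compute. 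Under this symmetry the hypothesis $n'_{r_4}=0$ of (b) becomes the hypothesis $m_{r_2}=0$ of (c) for the reflected data, and $m'_{r_3}=0$ of (d) becomes $n_{r_1}=0$ of (a); hence (b) and (d) would follow from (c) and (a) after translating the shifts $M_1+1,M_2+1$ and the constants through the reflection. I expect verifying $C'$ and checking that the reflected shifts match $M_1+1$ and $M_2+1$ to be the most delicate bookkeeping step.
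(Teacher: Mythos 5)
Your proposal is correct, and for parts (a) and (c) it is essentially the paper's argument: the paper also reduces to a row of the form $(\ast,0,\dots,0)$ (by first gauging the Wronskian with a common factor via \eqref{eq:Wr1}), expands, and then applies \eqref{app:eq:diff1}--\eqref{app:eq:diff4}; your column-operation derivation of $\Wr[\dots,e^x]=\pm e^x\Wr[(D-1)(\dots)]$ is just the gauge-by-$e^{-x}$ version of the same step. Where you genuinely diverge is in (b) and (d). The paper proves (b) \emph{directly}: it writes $\Wr[f_1,\dots,f_r]=(e^xx^{-\alpha})^r\Wr[e^{-x}x^{\alpha}f_1,\dots,e^{-x}x^{\alpha}f_{r-1},1]$, expands along the row of $1$, and observes that $\tfrac{d}{dx}(e^{-x}x^{\alpha}f_j)=C_j\,e^{-x}x^{\alpha-1}\tilde f_j$ with $\tilde f_j$ of the same four types at parameter $\alpha-1$; the extra $x^{-1}$ is harmless because the common factor $e^{-x}x^{\alpha-1}$ is pulled out by \eqref{eq:Wr1} and absorbed into the prefactor $x^{(\alpha+r_1+r_2)(r_3+r_4)}$. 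So the obstruction you cite (``no first-order operator lowers the parameter on all four types simultaneously'') is not actually an obstruction in this setting. Your workaround via the reflection $\Omega^{(\alpha)}_{M_1,M_2}(x)=(-1)^{(r_1+r_2)(r_3+r_4)}\Omega^{(-\alpha)}_{M_2^{*},M_1^{*}}(x)$ is nonetheless valid: multiplying all entries by $x^{\alpha}$ swaps types $1\leftrightarrow3$ and $2\leftrightarrow4$, the exponent identity $(\alpha+r_1+r_2)(r_3+r_4)-\alpha r=(-\alpha+r_3+r_4)(r_1+r_2)$ makes the prefactors match exactly, the constant is precisely the block-permutation sign, and the residual factor $(-1)^{r_1+r_2}$ needed to turn the $(-1)^{r_4-1}$ of (c) into the $(-1)^{r_1+r_2+r_4-1}$ of (b) (and to produce the $(-1)^{r_1+r_2}$ in (d)) comes out of the change in that permutation sign when $r_3+r_4$ drops by one. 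What your route buys is a conceptual link to the conjugation duality behind Lemma \ref{lem:GLP5}; what the paper's route buys is a self-contained one-step computation for each case with no cross-referencing between the four parts.
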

\begin{proof}
	We give the proof of part (b). The other proofs are similar or, 
	in case of part (a), even simpler.
	
	Assume $n'_{r_4} = 0$. Then $f_r(x) = e^x x^{-\alpha}$.
	We take the factor $e^x x^{-\alpha}$ out of the Wronskian by means of the 
	identity \eqref{eq:Wr1}, and we expand the Wronskian determinant along the
	last row, to obtain
	\begin{align*} \Omega^{(\alpha)}_{M_1,M_2}(x)
	& = \left(e^{x} x^{-\alpha} \right)^r
	e^{-(r_2+r_4)x} x^{(\alpha+r_1+r_2)(r_3+r_4)} \cdot 
	\Wr \left[ e^{-x} x^{\alpha} f_1, \ldots,
	e^{-x} x^{\alpha} f_{r-1}, 1 \right] \\
	& = (-1)^{r-1} \left(e^{x} x^{-\alpha} \right)^r
	e^{-(r_2+r_4)x} x^{(\alpha+r_1+r_2)(r_3+r_4)} \\
	& \qquad \times
	\Wr \left[ \frac{d}{dx} \left(e^{-x} x^{\alpha} f_1\right), \ldots,
	\frac{d}{dx} \left(e^{-x} x^{\alpha} f_{r-1} \right) \right].
	\end{align*}
	We use the identities
	\begin{align*} 
	\frac{d}{dx} \left(e^{-x} x^{\alpha} f_{j} \right) & = 	
	\frac{d}{dx} \left(e^{-x} x^{\alpha} L_{n_j}^{(\alpha)}(x) \right) \\
	& = (n_j+1) x^{\alpha-1} e^{-x} L_{n_j+1}^{(\alpha-1)}(x), && j=1, \ldots, r_1, \\
	\frac{d}{dx} \left(e^{-x} x^{\alpha} f_{r_1 +j} \right)
	& = \frac{d}{dx} \left( x^{\alpha} L_{m_j}^{(\alpha)}(-x) \right) \\
	& = (m_j+\alpha) x^{\alpha-1} L_{m_j}^{(\alpha-1)}(-x), && j=1, \ldots, r_2, \\
	\frac{d}{dx} \left(e^{-x} x^{\alpha} f_{r_1 +r_2 +j} \right) 
	& = \frac{d}{dx} \left( e^{-x}  L_{m'_j}^{(-\alpha)}(x) \right) \\
	& = -e^{-x} L_{m'_j}^{(-\alpha+1)}(x), && j=1, \ldots, r_3, \\
	\frac{d}{dx} \left(e^{-x} x^{\alpha} f_{r_1 +r_2 + r_3 +j} \right)
	& = \frac{d}{dx} \left(L_{n'_j}^{(-\alpha)}(-x) \right) \\
	& = L_{n'_j-1}^{(-\alpha+1)}(-x), && j =1, \ldots, r_4-1,
	\end{align*}
	that follow from  \eqref{app:eq:diff1}-- \eqref{app:eq:diff4}.
	
	Let $\tilde{M}_1 = M_1+1$, and let $\tilde{f}_1, \ldots, \tilde{f}_{\tilde{r}}$
	be the functions associated with Maya diagram $\tilde{M}_1$ and 
	parameter $\tilde{\alpha} = \alpha-1$. We also use $\tilde{r}_1 = r_1$,
	$\tilde{r}_2 = r_2$, $\tilde{r}_3 = r_3$, $\tilde{r}_4 = r_4 - 1$, 
	and $\tilde{r}= r-1$. The above identities show that
	\[ 	\frac{d}{dx} \left(e^{-x} x^{\alpha} f_{j} \right)  =
	C_j e^{-x} x^{\alpha-1}  \tilde{f}_j, \qquad j = 1, \ldots, r-1. \]
	where 
	$C_j = n_j+1$, for $j=1, \ldots, r_1$, 
	$C_{r_1+j} = m_j + \alpha$,  for $j = 1, \ldots, r_2$,
	$C_{r_1+r_2+j} = -1$ for $j=1, \ldots, r_3$, and 	
	$C_{r_1+r_2+r_3+j}  = 1$ for $j=1, \ldots, r_4-1$. 
	
	We take the constant factor $C_j$ out of the $j$th column of the Wronskian,
	for each $j=1, \ldots, r$, and then
	take out the common factor $e^{-x} x^{\alpha-1}$ by means of the identity
	\eqref{eq:Wr1}. This leads to a prefactor $\left(e^{-x} x^{\alpha-1}\right)^{r-1}$
	and so
	\begin{align*} 
	\Omega^{(\alpha)}_{M_1,M_2}(x)
	& = c_1 
	\left(e^x x^{-\alpha}\right)^r
	\left(e^{-x} x^{\alpha-1}\right)^{r-1}  
	e^{-(r_2+r_4)x} x^{(\alpha+r_1+r_2)(r_3+r_4)} \cdot
	\Wr \left[ \tilde{f}_1, \ldots,
	\tilde{f}_{r-1}  \right] \\
	& = c_1
	e^{-(r_2+r_4-1)x} x^{(\alpha+r_1+r_2)(r_3+r_4) - r+1} \cdot
	\Wr \left[ \tilde{f}_1, \ldots,
	\tilde{f}_{r-1}  \right]
	\end{align*}
	where
	\begin{equation*}
	c_1 = (-1)^{r-1} \prod_{j=1}^{r_4-1} C_j = (-1)^{r_1+r_2+r_4 -1} 
	\prod_{j=1}^{r_1} (n_j+1) \prod_{j=1}^{r_2} (m_j +\alpha).
	\end{equation*}
	Hence, \eqref{eq:reduction4} follows since
	\[ (\alpha+r_1+r_2)(r_3+r_4) - r+1 = 
	(\alpha-1 + r_1 + r_2)(r_3+r_4-1)
	= (\tilde{\alpha} + \tilde{r}_1 + \tilde{r}_2)(\tilde{r}_3 + \tilde{r}_4). \]
\end{proof}

\subsection{Shifting process}
Firstly, assume that $t_1 \leq 0$ and $t_2 \leq 0$. Then $r_3=r_4=0$, $r_1 \geq |t_1|-1$ with $n_{r_1-j} = j$, for $j=0, \ldots, |t_1|-1$ and $r_2 \geq |t_2|-1$ with $m_{r_2-j} = j$, for $j=0, \ldots, |t_2|-1$. Therefore we can apply \eqref{eq:reduction1} $|t_1|$ times and \eqref{eq:reduction3} $|t_2|$ times. We find that
\begin{equation*}
\Omega_{M_1,M_2}^{(\alpha)} = C(\alpha) \Omega_{M_1+t_1,M_2+t_2}^{(\alpha-t_1-t_2)}, 
\end{equation*}
where
\begin{equation}\label{eq:Omegashift0}
C(\alpha) = (-1)^{\sum_{i=1}\limits^{|t_2|}r_2-i} = (-1)^{d_0}.
\end{equation}

Secondly, assume that $t_1 > 0$ and denote
\[ \tilde{M_1} = M_1 + 1 : \quad \left(\tilde{n}_1', \dots, \tilde{n}_{\tilde{r}_4}' \mid 
\tilde{n}_1, \dots, \tilde{n}_{\tilde{r}_1} \right). \]
There are two situations, depending on whether the box immediately to the
left of the origin in $M_1$ is filled or not.
\begin{itemize}
	\item
	If it is empty then $n_{r_4}' = 0$, $\tilde{r}_4 = r_4 - 1$, $\tilde{r}_1 = r_1$,
	and 
	\begin{equation} \label{eq:M1tilde1} 
	\tilde{M_1} : \quad \left(n_1'-1, \dots, n_{r_4-1}'-1 \mid 
	n_1+1, \dots, n_{r_1}+1 \right). 
	\end{equation}
	\item
	If it is filled then $\tilde{n}_{r_1+1}=0$, 
	$\tilde{r}_4 = r_4$ $\tilde{r}_1 = r_1 + 1$,	and 
	\begin{equation} \label{eq:M1tilde2}
	\tilde{M_1} : \quad \left(n_1'-1, \dots, n_{r_4}'-1 \mid 
	n_1+1, \dots, n_{r_1}+1, 0 \right).
	\end{equation}
\end{itemize}
In both cases a reduction procedure is possible, since either $n_{r_4}'=0$
or $\tilde{n}_{r_1+1} = 0$. If $n_{r_4}' = 0$, we use \eqref{eq:reduction4}
and if $\tilde{n}_{r_1+1} = 0$, we use \eqref{eq:reduction1} but with
$M_1+1$ instead of $M_1$ and $\alpha-1$ instead of $\alpha$, i.e., 
\begin{equation} \label{eq:reduction5} 
\Omega_{M_1, M_2}^{(\alpha)} = 
\left(	\prod_{i=1}^{r_3} (m_i' - \alpha+1) \prod_{i=1}^{r_4} n_i' \right)^{-1} 
\Omega_{M_1+1, M_2}^{(\alpha-1)}. 
\end{equation}
Continuing in this way, we go from $M_1$ to $M_1 + t_1$, to arrive at 
a Maya diagram with no empty boxes on the left. We find
\begin{equation*}
\Omega_{M_1,M_2}^{(\alpha)} = C_1(\alpha) \Omega_{M_1+t_1,M_2}^{(\alpha-t_1)} 
\end{equation*}
for some constant $C_1(\alpha)$. Now we perform a second step where we shift $M_2$ to $M_2 + t_2$. Hence in a similar way we find for some constant $C_2(\alpha)$
\begin{equation*}
\Omega_{M_1+t_1,M_2}^{(\alpha-t_1)} = C_2(\alpha) \Omega_{M_1+t_1,M_2+t_2}^{(\alpha-t_1-t_2)}.
\end{equation*}
Hence
\[ \Omega_{M_1,M_2}^{(\alpha)} = C_1(\alpha)C_2(\alpha) \Omega_{M_1 + t_1, M_2+t_2}^{(\alpha-t_1-t_2)}, \]
which means in terms of the partitions $\lambda$ and $\mu$ that indeed 
\begin{equation}\label{eq:Omegashift}
\Omega_{M_1,M_2}^{(\alpha)} = C(\alpha) \Omega_{\lambda, \mu}^{(\alpha-t_1-t_2)}, 
\end{equation}
where $C(\alpha)=C_1(\alpha)C_2(\alpha)$.

Thirdly, when $t_2>0$ we first shift the Maya diagram $M_2$ to $M_2+t_2$. Next we transpose $M_1$ to $M_1+t_1$ and we find that \ref{eq:Omegashift} also holds.

The constant $C(\alpha)$ can be found by keeping track of all the prefactors we
find along the way. There is a fair amount of cancellation taking place. In particular
every factor that we would find in the denominator also appears somewhere
in a numerator. It leads to the formula \eqref{eq:constantC}. However, knowing \eqref{eq:constantC}, we can also verify it by induction
on the value of $T=|t_1| + |t_2|$.  

If $t_1=t_2=0$, then $r_3=r_4 = 0$, and all products in \eqref{eq:constantC} 
are empty and $d_0=0$. We then find $C(\alpha)  = 1$ as it should be. 

Take $T > 0$ and assume the formula \eqref{eq:constantC} is correct 
whenever $|t_1| + |t_2| = T-1$. Assuming $|t_1| + |t_2| = T$,
we have a number of cases to consider. Note that we may assume that $t_1>0$ or $t_2>0$ as the situation where $t_1,t_2\leq0$ is derived in \eqref{eq:Omegashift0} which also covers $t_1=t_2=0$.
\begin{itemize}
	\item[(a)] $t_1 > 0$ and $\tilde{M}_1 = M_1 + 1$ is given by \eqref{eq:M1tilde1},
	\item[(b)] $t_1 > 0$ and $\tilde{M}_1 = M_1 + 1$ is given by \eqref{eq:M1tilde2},
	\item[(c)] $t_2 > 0$ and $\tilde{M}_2 = M_1 + 1$ is similar as \eqref{eq:M1tilde1},
	\item[(d)] $t_2 > 0$ and $\tilde{M}_2 = M_1 + 1$ is similar as \eqref{eq:M1tilde2}.
\end{itemize}
As the ideas are similar for each case, we only treat case (b). In that case we have \eqref{eq:reduction5} and so
\begin{equation} \label{eq:C}
C(\alpha,M_1,M_2)
= C(\alpha-1,M_1+1,M_2) 
\left(	\prod_{i=1}^{r_3} (m_i' - \alpha+1) \prod_{i=1}^{r_4} n_i' \right)^{-1}.
\end{equation}

By the induction hypothesis,
\begin{multline}  \label{eq:CM1tilde2}
C(\alpha-1,M_1+1,M_2) 
= (-1)^{\tilde{d}} 
\prod_{j=1}^{\tilde{r}_1} \prod_{k=1}^{r_3} 
\left(m_k' - (\alpha-1) - \tilde{n}_j\right) 
\prod_{j=1}^{r_2} \prod_{k=1}^{\tilde{r}_4}
\left( \tilde{n}_k'-(\alpha-1) - m_j \right) \\
\times
\prod_{j=1}^{\tilde{r}_1}  \prod_{k=1}^{r_4} 
\left(\tilde{n}_j+\tilde{n}'_k + 1 \right)
\prod_{j=1}^{r_2} \prod_{k=1}^{r_3} (m_j + m_k' + 1)
\end{multline}
with the parameters $\tilde{r}_1 = r_1 + 1$, $\tilde{r}_4 = r_4$, $\tilde{n}_j = n_j+1$ for $j=1, \ldots, r_1$, 
$\tilde{n}_{r_1+1} = 0$, and $\tilde{n}'_j = n'_j - 1$ for $j=1, \ldots, r_4$,
as they are associated with $\tilde{M}_1$, see \eqref{eq:M1tilde2}. The first three double products appearing in \eqref{eq:CM1tilde2} are 
\begin{align*}
\prod_{j=1}^{\tilde{r}_1} \prod_{k=1}^{r_3} (m_k'-(\alpha-1)-\tilde{n}_j) 
& = \prod_{j=1}^{r_1} \prod_{k=1}^{r_3} (m_k'-\alpha-n_j) \prod_{k=1}^{r_3} (m'_k-(\alpha-1)), \\
\prod_{j=1}^{r_2} \prod_{k=1}^{\tilde{r}_4} (\tilde{n}_k'-(\alpha-1)-m_j)
& = \prod_{j=1}^{r_2} \prod_{k=1}^{r_4} (n_k'-\alpha-m_j),\\
\prod_{j=1}^{\tilde{r}_1} \prod_{k=1}^{\tilde{r}_4} (\tilde{n}_j + \tilde{n}_k' +1)
& = \prod_{j=1}^{r_1} \prod_{k=1}^{r_4} (n_j + n_k' +1) \prod_{k=1}^{r_4} n_k'.
\end{align*}
Combining this with \eqref{eq:C} and \eqref{eq:CM1tilde2} we obtain
\eqref{eq:constantC} up to the sign. So far, we only have to proof that $(-1)^{\tilde{d}}=(-1)^{d}$. The number $\tilde{d}$ depends on $\tilde{M}_1$ and $M_2$ where $d$ depends on $M_1$ and $M_2$. Trivially, $\tilde{d}_0=d_0$ and $\tilde{d}_2=d_2$ as they are independent of the first Maya diagram. The induction hypothesis gives us
\begin{align*}
\tilde{d}_1
&= \frac{\tilde{r}_4(\tilde{r}_4-1)}{2} + \tilde{r}_1 \tilde{r}_4 + \frac{(\tilde{n}'_1+1-\tilde{r}_4)(\tilde{n}'_1+2+\tilde{r}_4)}{2} + \sum\limits_{i=1}^{\tilde{n}'_1+1-\tilde{r}_4} \tilde{n}_{\tilde{r}_1+i} \\
&= \frac{r_4(r_4-1)}{2} + (r_1+1)r_4 + \frac{(n'_1-r_4)(n'_1+1+r_4)}{2} + \sum\limits_{i=1}^{n'_1-r_4} \left(n_{r_1+1+i}+1\right).
\end{align*}
The box to the left of the origin of $M_1$ is filled and hence $n_{r_1+1}=-1$. Therefore
\begin{align*}
\tilde{d}_1
&= \frac{r_4(r_4-1)}{2} + r_1 r_4 + r_4 + \frac{(n'_1+1-r_4)(n'_1+2+r_4)}{2} -n_1'-1  +n'_1-r_4 + \sum\limits_{i=1}^{n'_1-r_4} n_{r_1+1+i} \\
&= \frac{r_4(r_4-1)}{2} + r_1 r_4  + \frac{(n'_1+1-r_4)(n'_1+2+r_4)}{2} + \sum\limits_{i=1}^{n'_1+1-r_4} n_{r_1+i}.
\end{align*}
Hence $\tilde{d}_1=d_1$ such that $(-1)^{\tilde{d}}=(-1)^{d}$. This ends the proof of Theorem \ref{thm:appA}.

\subsection{Proof of Lemma \ref{lem:GLP5}}\label{sec:ConjPart}
From the previous subsections, it should be clear that we can play with the Maya diagrams to obtain several results. For example, Lemma \ref{lem:GLP5} can be viewed as a special case of Theorem \ref{thm:appA}. 

\begin{proof}[Proof of Lemma \ref{lem:GLP5}]
	Take the following Maya diagrams,
	\begin{equation} \label{eq:specialM1M2}
	\begin{aligned}
	M_1: & \quad \left( n'_1,\dots,n'_{r_4} \mid \emptyset \right), \\
	M_2: & \quad \left( m'_1,\dots,m'_{r_3} \mid \emptyset \right),
	\end{aligned}
	\end{equation} 
	where $n'_{r_4}\neq 0$ if $r_4 \geq 1$ and $m'_{r_3}\neq 0$ if $r_3 \geq 0$.
	In this situation we have that
	\begin{equation*}
	\Omega^{(\alpha)}_{M_1, M_2}(x) 
	= e^{-r_4x} x^{\alpha(r_3+r_4)} \cdot \Wr[f_1,\dots,f_r] 
	\end{equation*}
	where $r=r_3+r_4$ and $f_1, \ldots, f_r$  are as in \eqref{app:fj3}-\eqref{app:fj4} 
	with $r_1=r_2=0$. Using \eqref{eq:Wr1} with $g(x)= x^{-\alpha}$, 
	and comparing with \eqref{eq:OmegaLaMu},  we get that
	\begin{equation} \label{eq:conj0}
	\Omega^{(\alpha)}_{M_1, M_2}
	= \Omega^{(-\alpha)}_{\mu',\lambda'}
	\end{equation}
	where $\mu'$ and $\lambda'$ are the conjugate partitions. 
	We apply Theorem \ref{thm:appA} to the left-hand side, where we note that
	all double products are empty since $r_1=r_2=0$, and at the
	same time use Lemma \ref{lem:GLP4} to interchange the two partitions
	in the right-hand side of \eqref{eq:conj0}. We end up with 
	\begin{equation}\label{eq:conj1}
	(-1)^{d_1+d_2} \Omega^{(\alpha-t)}_{\lambda,\mu}(x) =
	(-1)^{\frac{r_3(r_3-1)}{2} + \frac{r_4(r_4-1)}{2}} 
	\Omega^{(-\alpha)}_{\lambda',\mu'}(-x)
	\end{equation}
	In this special situation, we have 
	\begin{align*}
	&d_1 = |\lambda| + \frac{r_4(r_4-1)}{2}, \\
	&d_2 = |\mu| + \frac{(m_1'+1-r_3)(m_1'-r_3)}{2}.
	\end{align*}
	The value $m_1'+1-r_3$ is the number of empty boxes to the left of the 
	origin of $M_2$ and therefore it is equal to the length of the partition $\mu$, 
	which we denote here by $r(\mu)$. Similarly, we have $r(\lambda)$
	as the length of the partition $\lambda$.  We also note that $\mu_1 = r_3$,
	and then \eqref{eq:conj1} reduces to 
	\begin{equation}\label{eq:conj2}
	\Omega^{(\alpha-t)}_{\lambda,\mu}(x)
	= (-1)^{\frac{\mu_1(\mu_1-1)}{2} + |\lambda|+|\mu| + 
		\frac{r(\mu)(r(\mu)-1)}{2}} 
	\Omega^{(-\alpha)}_{\lambda',\mu'}(-x).
	\end{equation}
	This is the result of Lemma \ref{lem:GLP5} if we replace $\alpha$ by $\alpha+t$.
	
	However, it remains  to check that the value of $t$ used in Lemma \ref{lem:GLP5} 
	agrees  with the one from Theorem \ref{thm:appA} for the special 
	case \eqref{eq:specialM1M2}.
	In Theorem \ref{thm:appA} we have $t = t_1 + t_2$ with $t_1 = t(M_1)$ and $t_2 = t(M_2)$.
	For the Maya diagrams \eqref{eq:specialM1M2} this is $t_1 = n_1'+1$ and 
	$t_2 = m_1'+1$.
	In Lemma \ref{lem:GLP5} we have $ t= \lambda_1 + \mu_1 + r(\lambda) + r(\mu)$.
	and indeed, it holds true that
	$\lambda_1 + r(\lambda) = n_1'+1$ and $\mu_1 + r(\mu) = m_1'+1$.
	This completes the proof of the lemma.
\end{proof}

\section{Construction of the exceptional Laguerre polynomial}\label{sec:ConstructionXLP}
Exceptional Laguerre polynomials are often denoted as $X_m$-Laguerre polynomials. In this section we prove that the exceptional Laguerre polynomial defined in \eqref{def:XLP1} using two partitions captures the most general format and we give explicit identities with respect to the definitions of $X_m$-Laguerre polynomials.

\subsection{Exceptional Laguerre polynomial revisited}
The exceptional Laguerre polynomial is obtained by adding one eigenfunction to the Wronskian of the generalized Laguerre polynomial. We defined two kinds of exceptional Laguerre polynomials in \eqref{def:XLP1} and \eqref{def:XLP2}. They are related by \eqref{eq:XLP1+2} and therefore we only considered \eqref{def:XLP1}. In general, we could take $f_1,\dots,f_r$ as in \eqref{app:fj1}-\eqref{app:fj4} and add one more function into the Wronskian where this function is one of the four types. By the reduction procedure, we can always reduce each set up to definition \eqref{def:XLP1} as stated in the following proposition. Therefore our definition of the exceptional Laguerre polynomial \eqref{def:XLP1} covered all possibilities. Recall that $\lambda'$ is the conjugated partition of $\lambda$.

\begin{proposition}
Let $f_1,\dots,f_r$ as in \eqref{app:fj1}-\eqref{app:fj4} and take two Maya diagrams $M_1,M_2$ as in \eqref{eq:M1M2}. Let $\lambda = \lambda(M_1)$ and $\mu = \lambda(M_2)$ be the partitions that are associated with these Maya diagrams and let $r(\lambda)$ and $r(\mu)$ denote the length of these partitions. Let $t_1 = t(M_1)$, $t_2 = t(M_2)$ as in \eqref{eq:tM}. Take $s\geq0$.
	\begin{enumerate}
		\item[\rm (a)] If $s\neq n_i$ for $i=1,\dots,r_1$, then $n:=s+t_1+|\lambda|+|\mu|-r(\lambda)\in\mathbb{N}_{\lambda,\mu}$ and 
		\begin{equation*}
			e^{-(r_2+r_4)x}x^{(\alpha+r_1+1+r_2)(r_3+r_4)} \cdot \Wr[f_1,\dots,f_r,L^{(\alpha)}_{s}(x)](x)
				= C_1 L^{(\alpha-t_1-t_2)}_{\lambda,\mu,n}(x),
		\end{equation*}
		for some constant $C_1$.
		
		\item[\rm (b)] If $s\neq m_i$ for $i=1,\dots,r_2$, then $n:=s+t_2+|\lambda|+|\mu|-r(\mu)\in\mathbb{N}_{\mu,\lambda}$ and
		\begin{equation*}
			e^{-(r_2+1+r_4)x}x^{(\alpha+r_1+r_2+1)(r_3+r_4)} \cdot \Wr[f_1,\dots,f_r,e^{x} L^{(\alpha)}_{s}(-x)](x)
				= C_2 L^{(\alpha-t_1-t_2)}_{\mu,\lambda,n}(-x),
		\end{equation*}
		for some constant $C_2$.
		
		\item[\rm (c)] If $s\neq m'_i$ for $i=1,\dots,r_3$, then $n:=s+m_1+1+|\lambda|+|\mu|-r(\mu')\in\mathbb{N}_{\mu',\lambda'}$ and
		\begin{equation*}
		e^{-(r_2+r_4)x}x^{(\alpha+r_1+r_2)(r_3+1+r_4)} \cdot \Wr[f_1,\dots,f_r,x^{-\alpha} L^{(-\alpha)}_{s}(x)](x)
		\\ = C_3 L^{(-\alpha')}_{\mu',\lambda',n}(x),
		\end{equation*}
		where $\alpha'=\alpha+n_1+m_1+2$ and for some constant $C_3$.
		
		\item[\rm (d)] If $s\neq n'_i$ for $i=1,\dots,r_4$, then $n:=s+n_1+1+|\lambda|+|\mu|-r(\lambda')\in\mathbb{N}_{\lambda',\mu'}$, then 
		\begin{equation*}
			e^{-(r_2+r_4+1)x}x^{(\alpha+r_1+r_2)(r_3+r_4+1)} \cdot \Wr[f_1,\dots,f_r,e^{x} x^{-\alpha} L^{(-\alpha)}_{s}(x)](x)
				= C_4 L^{(-\alpha')}_{\lambda',\mu',n}(-x),
		\end{equation*}
		where $\alpha'=\alpha+n_1+m_1+2$ and for some constant $C_4$.
	\end{enumerate}
\end{proposition}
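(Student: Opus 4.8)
The plan is to handle all four cases by one mechanism and to spell out only case (a) in detail; cases (b)--(d) are then obtained by applying the identical reduction to the appropriate sector of the Maya-diagram data, combined with the duality of Lemma~\ref{lem:GLP4} in case (b) and the conjugation bookkeeping of Lemma~\ref{lem:GLP5} in cases (c) and (d). The common observation is that appending one Laguerre polynomial of a given type to the Wronskian is exactly the operation of adding one box to the corresponding sector of the combined Maya diagram $(M_1,M_2)$, so that the left-hand side is, up to an overall sign, one of the four-type generalized Laguerre polynomials $\Omega^{(\alpha)}_{\cdot,\cdot}$ from \eqref{eq:OmegaGeneral}, to which Theorem~\ref{thm:appA} applies.

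First I would set up case (a). Appending $L^{(\alpha)}_s(x)$ is a type-one eigenfunction, hence it adds a filled box at relative position $s$ to the right of the origin of $M_1$; write $\hat M_1$ for the enlarged diagram, which has $\hat r_1=r_1+1$ while $r_2,r_3,r_4$ are unchanged. The assumption $s\neq n_i$ guarantees that $\hat M_1$ is a genuine Maya diagram (no repeated box), equivalently that the Wronskian does not collapse. Comparing prefactors in \eqref{eq:OmegaGeneral}, the exponent of $x$ becomes $(\alpha+\hat r_1+r_2)(r_3+r_4)=(\alpha+r_1+1+r_2)(r_3+r_4)$ and the exponential prefactor is unchanged, so the left-hand side of (a) equals $\pm\,\Omega^{(\alpha)}_{\hat M_1,M_2}$; the sign records the cost of moving the new column into its place among the type-one entries.

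Next I would reduce $\Omega^{(\alpha)}_{\hat M_1,M_2}$ by the shifting process of Theorem~\ref{thm:appA}, but shifting the first diagram by $t_1=t(M_1)$ rather than re-canonicalising $\hat M_1$. This is the key point: adding a box to the \emph{right} does not alter the empty boxes to the left of $M_1$, so the very same sequence of $t_1$ reductions from Lemma~\ref{app:lem:1} that canonicalises $M_1$ applies verbatim to $\hat M_1$, carrying the extra box along; throughout the process that box stays to the right of the origin and its label increases from $s$ to $s+t_1$. After also shifting $M_2$ by $t_2$, the parameter has become $\alpha-t_1-t_2$ and we arrive at the two-type generalized Laguerre polynomial whose second partition is $\mu=\lambda(M_2)$ and whose first partition $\tilde\lambda$ is obtained from $\lambda=\lambda(M_1)$ by inserting the single degree $s'=s+t_1$. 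By the discussion around \eqref{eq:XLPexample} this polynomial is, up to sign, $L^{(\alpha-t_1-t_2)}_{\lambda,\mu,n}$ with $n=s'+|\lambda|+|\mu|-r(\lambda)=s+t_1+|\lambda|+|\mu|-r(\lambda)$; moreover $s\neq n_i$ becomes the statement that $s'$ avoids the degrees $\lambda_j+r(\lambda)-j$ of $\lambda$, which is precisely the condition $n\in\mathbb N_{\lambda,\mu}$ of \eqref{eq:nLaMu}. This proves (a).

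For (b) the same computation enlarges $M_2$ by a type-two (filled, right-of-origin) box and produces $\Omega^{(\alpha-t_1-t_2)}_{\lambda,\tilde\mu}$; rewriting this via the duality \eqref{eq:XLP1+2} yields the stated form $C_2\,L^{(\alpha-t_1-t_2)}_{\mu,\lambda,n}(-x)$ with $n\in\mathbb N_{\mu,\lambda}$. For (c) and (d) one appends a type-three resp.\ type-four eigenfunction, i.e.\ an empty box to the left of $M_2$ resp.\ $M_1$; after full reduction such a box feeds the \emph{conjugate} partition $\mu'$ resp.\ $\lambda'$ and the parameter turns into $-\alpha'$ with $\alpha'=\alpha+n_1+m_1+2$, exactly the shift bookkeeping already carried out in the proof of Lemma~\ref{lem:GLP5} (here the relevant shifts are $m_1+1$ and $n_1+1$, taking the diagram to its ``all boxes on the right empty'' form). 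The main thing to watch, and the only place where care is genuinely needed, is the simultaneous matching of three quantities --- the parameter shift, the (possibly conjugated) partition, and the degree $n$ together with its membership in the admissible set. In particular one must handle the edge case in which the new box lands exactly at the canonical origin, so that $t(\hat M_1)\neq t(M_1)$; this causes no trouble precisely because we track the shift $t_1$ of $M_1$ itself and carry the extra box through the reduction, instead of re-canonicalising the enlarged diagram.
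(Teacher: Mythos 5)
Your proposal is correct and follows essentially the same route as the paper's proof: append the new eigenfunction as an extra box in the appropriate Maya diagram, apply the shifting process of Theorem~\ref{thm:appA} (to canonical form for (a)--(b), to the ``all right boxes empty'' conjugate form with shifts $n_1+1$ and $m_1+1$ for (c)--(d)), track how the extra degree becomes $s+t_1$, $s+t_2$, $s+m_1+1$, $s+n_1+1$ respectively, and finish with \eqref{eq:XLP1+2} in cases (b) and (d). Your explicit remark that one must shift by $t(M_1)$ of the \emph{original} diagram rather than re-canonicalise the enlarged one is a point the paper leaves implicit, and is a welcome clarification rather than a deviation.
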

\begin{proof}
	(a) By Theorem \ref{thm:appA}, we can shift the origin in both Maya diagrams to its canonical choice corresponding to both partitions $\lambda$ and $\mu$. By this process, $L^{(\alpha)}_{s}(x)$ shifts to $L^{(\alpha-t_1-t_2)}_{s+t_1}(x)$. Therefore we end up with definition \eqref{def:XLP1} and the degree is given by $n=s+t_1+|\lambda|+|\mu|-r(\lambda)\in\mathbb{N}_{\lambda,\mu}$. Note that the two conditions of $n\in\mathbb{N}_{\lambda,\mu}$ reduce to $s+t_1\geq 0$ and $s \neq n_i$ for $i=1,\dots,r_1$. The second condition is fulfilled by assumption and the first condition is trivially true by construction.
	
	(b) We apply the same procedure as in (a). In this case, the last function in the Wronskian $e^{x} L^{(\alpha)}_{s}(-x)$ shifts to $e^{x}L^{(\alpha-t_1-t_2)}_{s+t_2}(-x)$. Hence we end up with definition \eqref{def:XLP2} with degree $s+t_2+|\lambda|+|\mu|-r(\mu)\in\mathbb{N}_{\mu,\lambda}$. By \eqref{eq:XLP1+2}, interchanging the partitions gives definition \eqref{def:XLP1} up to a possible sign change. 
	
	(c) We shift both origins of the Maya diagrams to the right in such a way that all boxes to the right are empty, and the first box to the left is filled: $n_1+1$ steps for the first Maya diagram and $m_1+1$ steps for the second. This choice is related to the conjugated partition. Set $\alpha'=\alpha+n_1+m_1+2$. Similar as in Theorem \ref{thm:appA}, up to a constant we arrive at
	\begin{equation*}
		e^{-(r(\lambda')+1)x}x^{\alpha'(r(\lambda')+r(\mu')+1)} \cdot
			\Wr \left[g_1,\dots,g_{r(\lambda')},g_{r(\lambda')+1},\dots, g_{r(\lambda')+r(\mu')},g\right]
	\end{equation*}
	where
	\begin{align*}
	&g_j(x) = e^{x} x^{-\alpha'}L^{(-\alpha')}_{\bar{n}_j}(-x), && j=1,\dots,r(\lambda'),\\
	&g_{r(\lambda')+j} = x^{-\alpha'}L^{(-\alpha')}_{\bar{m}_j}(x), && j=1,\dots,r(\mu')
	\end{align*}
	for some non-negative integers $\bar{n}_j,\bar{m}_j$ and
	\begin{equation*}
	g(x)=e^{x} x^{-\alpha'} L^{(-\alpha')}_{s+m_1+1}(x)
	\end{equation*}
	because the function $e^{x} x^{-\alpha} L^{(-\alpha)}_{s}(x)$ is shifted to $e^{x} x^{-\alpha'} L^{(-\alpha')}_{s+m_1+1}(x)$. Hence all functions in the Wronskian consist of a common factor $x^{-\alpha'}$ which cancels out by the factor in front of the Wronskian by \eqref{eq:Wr1}. Next we can interchange the functions $g_j$ such that we end up with definition \eqref{def:XLP1} where the degree is $s-m_1-1+|\lambda'|+|\mu'|-r(\mu')$ and the parameter is $-\alpha'$. The identity follows as $|\lambda'|=|\lambda|$. 
	
	(d) The steps are similar to (c), however we end up with definition \eqref{def:XLP2}. By \eqref{eq:XLP1+2}, the result follows.
\end{proof}

\subsection{$X_m$-Laguerre polynomials} \label{subsec:Xm}
In the literature, exceptional Laguerre polynomials are often denoted as 
$X_m$-Laguerre polynomials where $m$ is the number of exceptional degrees \cite{GomezUllate_Kamran_Milson-09,GomezUllate_Kamran_Milson-10,GomezUllate_Kamran_Milson-12,GomezUllate_Marcellan_Milson,Grandati_Quesne,Ho,Ho_Odake_Sasaki,Ho_Sasaki,Horvath,Liaw_etal,Marquette_Quesne,Odake_Sasaki-10,Quesne,Sasaki_Tsujimoto_Zhedanov}. In our set-up, this number of exceptional degrees is given by 
\begin{equation}\label{eq:NumExcDeg}
m=|\mathbb{N}_{0} \setminus \mathbb{N}_{\lambda,\mu}|=|\lambda|+|\mu|,
\end{equation}
where $\mathbb{N}_{\lambda,\mu}$ is the degree sequence \eqref{eq:nLaMu}. 
Hence to obtain exactly $m$ exceptional degrees, we have to take the partitions 
$\lambda$ and $\mu$ in such a way that \eqref{eq:NumExcDeg} is satisfied. 
A few of these possibilities are studied in detail and referred as type I, 
type II and type III exceptional Laguerre polynomials. 
In \cite{Liaw_etal}, these polynomials are denoted by 
$L_{m,n}^{I,\alpha}, L_{m,n}^{II,\alpha}, L_{m,n}^{III,\alpha}$ where 
$m$ is the number of exceptional degrees, $n$ is the degree of the polynomial, 
$\alpha$ is a parameter and the Roman number relates to the corresponding type. 
The definitions of these polynomials are given below. As these 3 types only 
cover a small part of all exceptional Laguerre polynomials, we do not prefer 
this terminology and notation. The approach with partitions is more general
and covers all cases, as we have shown. 
 
We show how the $L_{m,n}^{I,\alpha}, L_{m,n}^{II,\alpha}, L_{m,n}^{III,\alpha}$
exceptional Laguerre polynomials are expressed in terms of partitions.

\begin{proposition} \label{prop:types}
Let $m\geq0$ be fixed. We have the following identities,
\begin{align} \label{eq:LmnI}
	L_{m,n}^{I,\alpha}		& = - L^{(\alpha-1)}_{\emptyset,(m),n},  && n \geq m,\\
	\label{eq:LmnII}
	L_{m,n}^{II,\alpha}	& = (-1)^{\frac{m(m+3)}{2}}(2m-n-\alpha-1) \cdot L^{(\alpha-m)}_{\emptyset,(1,\dots,1),n}, && n \geq m,\\
	\label{eq:LmnIII}
	L_{m,n}^{III,\alpha}	& = -n \cdot L^{(\alpha-m)}_{(1,\dots,1),\emptyset,n}, && n > m.
\end{align}
\end{proposition}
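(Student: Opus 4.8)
The plan is to treat the three identities in parallel, in each case starting from the definition of the $X_m$-Laguerre polynomial recorded in \cite{Liaw_etal} and rewriting it, up to an elementary prefactor, as a Wronskian of eigenfunctions drawn from Table \ref{tab:1} together with one extra classical Laguerre polynomial $L^{(\cdot)}_s$. The three types are distinguished only by the family of Table \ref{tab:1} from which the single degree-$m$ seed is taken: type I uses the second family $e^{x}L^{(\cdot)}_m(-x)$, while types II and III use the third family $x^{-\alpha}L^{(-\alpha)}_m(x)$ and the fourth family $e^{x}x^{-\alpha}L^{(-\alpha)}_m(-x)$, respectively. Once this is done, the reduction machinery of Section \ref{sec:ConstructionGLP} (Theorem \ref{thm:appA} and its exceptional-polynomial analogue established earlier in this section) converts each Wronskian into the canonical form \eqref{def:XLP1}; the shapes $(m)$ and $(1,\dots,1)$ and the parameter shifts then come out automatically, and the only real work is to identify the scalar prefactor.

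I would dispose of \eqref{eq:LmnI} by a direct computation. Here $\lambda=\emptyset$, $\mu=(m)$, so $r_1=0$, $r_2=1$, $s=n-m$, and \eqref{def:XLP1} reads
\[
  L^{(\alpha-1)}_{\emptyset,(m),n}(x)
  = e^{-x}\,\Wr\!\left[e^{x}L^{(\alpha-1)}_{m}(-x),\,L^{(\alpha-1)}_{n-m}(x)\right].
\]
Differentiating the two columns with \eqref{app:eq:diff1}--\eqref{app:eq:diff2} turns this $2\times2$ Wronskian into a sum of two products of Laguerre polynomials of parameters $\alpha-1$ and $\alpha$, which I would match termwise against the definition of $L^{I,\alpha}_{m,n}$ in \cite{Liaw_etal}; the constant comes out as $-1$. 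The admissibility requirement $n\in\mathbb N_{\emptyset,(m)}$ collapses to $n\ge m$, as stated.

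For \eqref{eq:LmnII} and \eqref{eq:LmnIII} the seed lies in the third or fourth family and therefore carries the parameter $-\alpha$. I would encode the whole Wronskian (the single degree-$m$ seed together with the added classical polynomial of degree $s$) by a pair of Maya diagrams and apply Theorem \ref{thm:appA}. A single third-family seed is encoded by $M_2:(m\mid\emptyset)$, whose associated partition is the column $(1,\dots,1)$ with $m$ boxes and whose shift is $t_2=m+1$; a single fourth-family seed is encoded by $M_1:(m\mid\emptyset)$ and places that same column in the $\lambda$-slot, which is exactly why type II lands on $\mu=(1,\dots,1)$ and type III on $\lambda=(1,\dots,1)$. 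In both cases the added classical polynomial occupies the first family, so that $r_1=1$ with degree $n_1=s$. The linear factor in the constant is then read off from \eqref{eq:constantC}: for type II the only nonempty double product is $\prod_{j=1}^{1}\prod_{k=1}^{1}(m'_k-\alpha-n_j)=(m-\alpha-s)$, which with $s=n-m$ becomes $2m-\alpha-n$ and accounts (up to the convention shift discussed below) for the factor $2m-n-\alpha-1$; for type III the surviving product is $\prod(n_j+n'_k+1)=s+m+1=n$, which accounts for the factor $n$. The sign is obtained by evaluating $(-1)^{d}$ from \eqref{eq:defd}, using the infinite filled-box sequences of Remark \ref{rem:Mayalabels}, and combining it with the column permutations performed along the way.

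The main obstacle is precisely this scalar bookkeeping. Two points require genuine care. First, the literature parameter in \cite{Liaw_etal} differs by one unit from the global parameter used in the seeds of the third and fourth families (a single Darboux step shifts $\alpha$ by $\pm1$): the naive reduction produces parameter $\alpha-m-1$, and reconciling the two conventions is what turns $\alpha-m-1$ into $\alpha-m$ and shifts $2m-\alpha-n$ into $2m-n-\alpha-1$. Second, the sign $(-1)^{m(m+3)/2}$ in \eqref{eq:LmnII} and the overall sign of $-n$ in \eqref{eq:LmnIII} do not come from $(-1)^{d}$ alone; assembling them requires the reordering signs from permuting columns in the Wronskian together with the normalization built into the definitions in \cite{Liaw_etal} (already at $m=1$ the type III sign forces an extra factor $-1$ that $(-1)^{d}$ does not supply). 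Once these conventions are pinned down, the equalities follow, the remaining manipulations being the routine expansions indicated above.
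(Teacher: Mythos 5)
Your proposal follows essentially the same route as the paper: type I is handled by a direct rewriting of the $2\times2$ determinant via the derivative identities \eqref{app:eq:diff1}--\eqref{app:eq:diff2}, while types II and III are converted to Wronskians with a third- or fourth-family seed of global parameter $\alpha+1$ and then reduced through the Maya-diagram shift of Theorem \ref{thm:appA}, with the linear factors read off from \eqref{eq:constantC} and the signs assembled from $(-1)^{d}$ plus the column reorderings. You correctly identify the one-unit parameter offset and the sources of all prefactors, so the outline is sound and matches the paper's proof.
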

The partition $(1, \ldots, 1)$ in \eqref{eq:LmnII} and \eqref{eq:LmnIII} 
consists of $m$ values $1$. Its Young diagram consists of one single column of length $m$.
The partition $(m)$ in \eqref{eq:LmnI} is its conjugate partition, and its
Young diagram consists of one row of length $m$.

The identities of Proposition \ref{prop:types}
are derived below, see \eqref{eq:TypeI}, \eqref{eq:TypeII} and \eqref{eq:TypeIII}, 
and were verified with Maple for the examples given in the 
appendices of \cite{Ho_Sasaki,Liaw_etal}. For the rest of this subsection, let $m$ be fixed.

\paragraph{Type I exceptional Laguerre polynomial.}
The polynomial is given by
\begin{equation*}
L_{m,n}^{I,\alpha}(x)
:= \begin{vmatrix}
L_m^{(\alpha)}(-x)		&-L^{(\alpha)}_{n-m-1}(x) \\
L_m^{(\alpha-1)}(-x) 	&L_{n-m}^{(\alpha-1)}(x) 	
\end{vmatrix}
\end{equation*}
for $n\geq m$, see \cite[equation (3.2)]{Liaw_etal}. If we interchange the rows in this determinant and use the derivatives \eqref{app:eq:diff1} and \eqref{app:eq:diff2}, we obtain that
\begin{equation*}
L_{m,n}^{I,\alpha} 
= -e^{-x} \cdot \Wr \left[f_1, L_{n-m}^{(\alpha-1)}\right],
\end{equation*}
where
\begin{equation*}
f_1(x) = e^x L_m^{(\alpha-1)}(-x).
\end{equation*}
Hence by definition \eqref{def:XLP1},
\begin{equation}\label{eq:TypeI}
L_{m,n}^{I,\alpha}
= - L^{(\alpha-1)}_{\emptyset,(m),n},
\end{equation}
where $\lambda=\emptyset$ and $\mu=(m)$. By Lemma \ref{lem:ELP}, these type I $X_m$-Laguerre polynomials form a complete set on the positive real line with respect to the measure $\frac{x^{\alpha} e^{-x}}{\left(\Omega^{(\alpha-1)}_{\emptyset,(m)}\right)^2}$ if $\alpha-1>-1$. Moreover the generalized Laguerre polynomial is related to a classical Laguerre polynomial, i.e. 
\begin{equation*}
\Omega^{(\alpha-1)}_{\emptyset,(m)}(x) = L^{(\alpha-1)}_m(-x)
\end{equation*}
for all $x\in\mathbb{C}$ which follows directly by definition \eqref{eq:OmegaLaMu}.

\paragraph{Type II exceptional Laguerre polynomial.}
This polynomial is defined as
\begin{equation*}
L_{m,n}^{II,\alpha}(x)
:= \begin{vmatrix}
x L_m^{(-\alpha-1)}(x)	& -L^{(\alpha+1)}_{n-m}(x) \\
(m-\alpha-1) L_m^{(-\alpha-2)}(x) & L_{n-m-1}^{(\alpha+2)}(x) 	
\end{vmatrix}
\end{equation*}
for $n\geq m$, see \cite[Section 4]{Liaw_etal}. If we use \eqref{app:eq:diff1} and \eqref{app:eq:diff3}, we can rewrite this polynomial as
\begin{equation}\label{eq:TypeII0}
L_{m,n}^{II,\alpha}
= -x^{\alpha+2} \cdot \Wr \left[f_1, f_2\right],
\end{equation}
where
\begin{align*}
f_1(x) &= x^{-\alpha-1} L_m^{(-\alpha-1)}(x), \\
f_2(x) &= L^{(\alpha+1)}_{n-m}(x).
\end{align*}
The eigenfunction $f_2$ corresponds to \eqref{app:fj1} and $f_1$ to \eqref{app:fj3}. Now the idea is that we apply theorem \ref{thm:appA} to transfer the eigenfunction $f_2$ to a set of eigenfunctions related to \eqref{app:fj2}. In fact, we transpose the second Maya diagram to its canonical choice \eqref{eq:lambdaM}, i.e. $M_2$: $(m|\emptyset) \to (\emptyset|m,m-1,\dots,1)$. This process is obtained by shifting the origin $m+1$ steps to the left and therefore the parameter $\alpha+1$ decreases by $m+1$. We also get extra factors while doing this steps. Encode the first Maya diagram $M_1: (\emptyset | n-m)$ which is already in the canonical choice, we get that
\begin{align}
L_{m,n}^{II,\alpha}
&= \Omega^{(\alpha+1)}_{M_1,M_2} \nonumber \\
&= (-1)^{\frac{m(m+1)}{2}}(2m-n-\alpha-1) \cdot \Omega^{(\alpha-m)}_{\lambda(M_1),\lambda(M_2)},
\label{eq:TypeII1}
\end{align}
where the first equality follows by interchanging $f_1$ and $f_2$ in \eqref{eq:TypeII0} and the second equality is obtained by applying Theorem \ref{thm:appA}. As a final step, we want the first function in the Wronskian of $\Omega^{(\alpha+1)}_{\lambda(M_1),\lambda(M_2)}$ to be the last eigenfunction such that we obtain the same order as in definition \eqref{def:XLP1}. Therefore we interchange the functions in the Wronskian which leads to an extra factor $(-1)^m$, i.e.
\begin{equation}\label{eq:TypeII2}
\Omega^{(\alpha-m)}_{\lambda(M_1),\lambda(M_2)}
= (-1)^m \cdot L^{(\alpha-m)}_{\emptyset,(1,\dots,1),n}
\end{equation}
where the length of the partition $\mu$ is given by $m$ such that $r=m$. If we combine \eqref{eq:TypeII1} and \eqref{eq:TypeII2} we end up with
\begin{equation}\label{eq:TypeII}
L_{m,n}^{II,\alpha}
= (-1)^{\frac{m(m+3)}{2}}(2m-n-\alpha-1) \cdot L^{(\alpha-m)}_{\emptyset,(1,\dots,1),n}.
\end{equation}
Hence by Lemma \ref{lem:ELP}, these type II $X_m$-Laguerre polynomials form an orthogonal complete set on the positive real line with respect to the measure $\frac{x^{\alpha} e^{-x}}{\left(\Omega^{(\alpha-m)}_{\emptyset,(1,\dots,1)}\right)^2}$ if $\alpha-m>-1$. The generalized Laguerre polynomial can be written as a Laguerre polynomial, i.e.
\begin{equation}\label{eq:TypeII3}
\Omega^{(\alpha-m)}_{\emptyset,(1,\dots,1)}
\equiv (-1)^{\frac{m(m+1)}{2}} L_m^{(-\alpha-1)}
\end{equation}
which follows by Theorem \ref{thm:appA}.

\paragraph{Type III exceptional Laguerre polynomial.}
The type III exceptional Laguerre polynomial is obtained in \cite[Section 5]{Liaw_etal} and is defined as
\begin{equation*}
L_{m,n}^{III,\alpha}(x)
:= \begin{vmatrix}
x L_m^{(-\alpha-1)}(-x)	& -L^{(\alpha+1)}_{n-m-1}(x) \\
(m+1) L_m^{(-\alpha-2)}(-x) & L_{n-m-2}^{(\alpha+2)}(x) 	
\end{vmatrix}
\end{equation*}
for $n>m$ while for $n=0$ this polynomial is defined as the constant function 1. We can use the derivatives \eqref{app:eq:diff1} and \eqref{app:eq:diff4} to obtain
\begin{equation*}
L_{m,n}^{III,\alpha}
= -x^{\alpha+2}e^{-x} \cdot \Wr \left[f_1, f_2\right],
\end{equation*}
for $n>m$ and where
\begin{align*}
f_1(x) &= x^{-\alpha-1}e^x L_m^{(-\alpha-1)}(-x),\\
f_2(x) &= L^{(\alpha+1)}_{n-m-1}(x).
\end{align*}
The function $f_1$ corresponds to \eqref{app:fj4}. Similar as in the Type II case, we shift the (first) Maya diagram to its canonical form by Theorem \ref{thm:appA}, i.e. $M_1$: $(m|n-m-1) \to (\emptyset|n,m,m-1,\dots,1)$. Hence we end up with only eigenfunctions of category \eqref{app:fj1}. As a last step, we transpose the first eigenfunction to the end of the Wronskian such that the order of the degrees is the same as in definition \eqref{def:XLP1}, i.e. $(n,m,m-1,\dots,1) \to (m,m-1,\dots,1,n)$. Combining all extra factors, we obtain the identity
\begin{equation}\label{eq:TypeIII}
L_{m,n}^{III,\alpha}
= -n \cdot L^{(\alpha-m)}_{(1,\dots,1),\emptyset,n},
\end{equation}
for $n>m$ and where the length of the partition $\lambda$ is given by $m$ such that $r=m$. Lemma \ref{lem:ELP} tells us that these type III $X_m$-Laguerre polynomials form an orthogonal complete set to the measure $\frac{x^{\alpha} e^{-x}}{\left(\Omega^{(\alpha-m)}_{(1,\dots,1),\emptyset}\right)^2}$ if $m$ is even and $\alpha-m>-1$. The generalized Laguerre polynomial can be written as a Laguerre polynomial, i.e.
\begin{equation*}
\Omega^{(\alpha-m)}_{(1,\dots,1),\emptyset}(x)
= (-1)^{m} L_m^{(-\alpha-1)}(-x)
\end{equation*}
for all $x\in\mathbb{C}$ which follows by Theorem \ref{thm:appA} or by applying identity \eqref{lem:GLP4Formula} on \eqref{eq:TypeII3}.

%%%%%%%%%%%%%%%%%%%%%%%%%%%%%%%%%%%%%%%%%%%%%%%%%%%%%%%%%%
%%%%%%%%%%%%%%%%%%%% New results %%%%%%%%%%%%%%%%%%%%%%%%%
%%%%%%%%%%%%%%%%%%%%%%%%%%%%%%%%%%%%%%%%%%%%%%%%%%%%%%%%%%

\section{Zeros of exceptional Laguerre polynomials: New results}\label{sec:Zeros-Results}
In this section we state our new results according to the zeros of exceptional Laguerre polynomials. From now on, we assume that $\alpha \in \mathbb R$ and the two 
partitions $\lambda$ and $\mu$ are fixed. As before, the length of the 
partition $\lambda$ is $r_1$ and for $\mu$ it is $r_{2}$. Let $r=r_1+r_2$.
Throughout, we also use $(n_1, \ldots, n_{r_1})$ and $(m_1, \ldots, m_{r_2})$
as in \eqref{eq:lambdamu}. Our definition of exceptional Laguerre polynomials
generalizes the $X_m$-Laguerre polynomials considered in \cite{GomezUllate_Marcellan_Milson,Liaw_etal}
and our results on zeros are generalizations of some
of the results in these papers, see \cite[Section 3-4]{GomezUllate_Marcellan_Milson} and \cite[Section 5.5]{Liaw_etal}.

\subsection{Number of positive real zeros}
Under the conditions of Lemma \ref{lem:ELP}, the polynomials are a complete 
orthogonal system, and they are also eigenfunctions for a Sturm-Liouville problem
on $[0,\infty)$. 
From general Sturm-Liouville theory,  see for example \cite{Simon}, 
the number of positive real zeros of $L^{(\alpha)}_{\lambda, \mu,n}$ is given by 
\begin{equation}\label{eq:NumRegZer}
	|\{m \in \mathbb{N_{\lambda,\mu}} : m<n\}|.
\end{equation} 
Moreover all these zeros are simple. Hence, the number of simple positive real zeros is non-decreasing and increases to infinity when $n$ tends to infinity. When we only consider a Wronskian of Laguerre polynomials, i.e. $r_1=0$ or $r_2=0$, then the number of positive real zeros is determined in \cite{GarciaFerrero_GomezUllate} for almost every $\alpha\in(-1,\infty)$. It is given by the alternating sum of the elements in the partition.

The value of $L^{(\alpha)}_{\lambda, \mu, n}$ at 
the origin can be computed as in a way similar to the generalized Laguerre polynomial 
case. It is non-zero when $\alpha>-1$ which is assumed in Lemma \ref{lem:ELP}. 
Hence for $n$ large enough, there are $n-|\lambda|-|\mu|$ simple zeros in the 
orthogonality region $(0,\infty)$ and these zeros are called the regular zeros 
of $L^{(\alpha)}_{\lambda, \mu,n}$. We denote them by
\begin{equation*}
	0<x^{(\alpha)}_{1,n}<x^{(\alpha)}_{2,n}<\cdots<x^{(\alpha)}_{n-|\lambda|-|\mu|,n}.
\end{equation*}	
The remaining $|\lambda| + |\mu|$ zeros are in $\mathbb C \setminus [0,\infty)$ and 
these are the exceptional zeros. We denote them by
\begin{equation*}
	z^{(\alpha)}_{1,n},z^{(\alpha)}_{2,n},\dots,z^{(\alpha)}_{|\lambda|+|\mu|,n}.
\end{equation*}
If the conditions of Lemma \ref{lem:ELP} are not satisfied, the number of positive 
real zeros cannot be determined by Sturm-Liouville theory. It is even possible that 
there is a zero at the origin. Therefore we make the following definition.

\begin{definition}
For $n \in \mathbb N_{\lambda,\mu}$, we use $N(n)$ to denote the number of zeros 
of $L^{(\alpha)}_{\lambda, \mu,n}$ in  $(0,\infty)$ including multiplicity. We call these zeros the 
regular zeros of $L^{(\alpha)}_{\lambda, \mu,n}$. The remaining $n-N(n)$ zeros 
are called the exceptional zeros.
\end{definition}

We omitted the partitions $\lambda,\mu$ and the parameter $\alpha$ in our notation 
of $N(n)$ because it should be clear what they are.  Obviously we have that 
$0 \leq N(n)\leq n$. As stated before, under the conditions of Lemma \ref{lem:ELP}, 
the number of regular zeros $N(n)$ is given by \eqref{eq:NumRegZer}. 

A first result is that we have a lower bound for $N(n)$ and moreover the number of simple regular zeros tends to infinity as the degree $n$ tends to infinity.

\begin{theorem} \label{thm:XLPN(n)}
Let $\alpha + r > - 1$. Then for $n\in\mathbb{N}_{\lambda,\mu}$, we have 
\begin{equation} \label{eq:XLPN(n)}
	n-2(|\lambda|+|\mu|)-r_2 \leq N(n).
\end{equation}
Moreover, the number of simple positive real zeros of $ L^{(\alpha)}_{\lambda, \mu,n}$ increases to infinity as $n$ tends to infinity.
\end{theorem}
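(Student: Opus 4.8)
The plan is to bound from below the number $K$ of \emph{sign changes} of $y_n := L^{(\alpha)}_{\lambda,\mu,n}$ on $(0,\infty)$, since a sign change forces a zero of odd multiplicity and distinct sign changes give distinct zeros, so that $N(n)\ge K$. I would first show that the sign-change estimate $K \ge n-2(|\lambda|+|\mu|)-r_2$ already yields the second assertion, and then indicate how to prove the estimate itself. Throughout write $D=|\lambda|+|\mu|$ and let $W^{(\alpha)}_{\lambda,\mu}$ be the weight from Lemma~\ref{lem:ELP}.

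For the simple-zeros claim, suppose $y_n$ has distinct positive real zeros $\xi_1,\dots,\xi_p$ with multiplicities $e_1,\dots,e_p$, so that $N(n)=\sum_j e_j$ and $K$ is the number of $j$ with $e_j$ odd. Let $K'$ be the number of $j$ with $e_j=1$, the simple positive zeros; each of the remaining $K-K'$ sign-change zeros has odd multiplicity at least $3$. Counting the total degree gives
\[
  n \ \ge\ N(n)\ \ge\ K' + 3(K-K'),
\]
so that $K' \ge (3K-n)/2$. Once the sign-change estimate is available this yields $K' \ge n - 3D - \tfrac32 r_2$, which tends to infinity with $n$; hence the number of simple positive real zeros is unbounded.

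It remains to establish $K \ge n-2D-r_2$, and here the engine is a quasi-orthogonality relation. By the Darboux--Crum factorisation behind \eqref{def:XLP1} one has $y_n/\Omega^{(\alpha)}_{\lambda,\mu} = \mathcal A[L^{(\alpha)}_s]$, where $\mathcal A[g]=e^{-r_2x}\Wr[f_1,\dots,f_r,g]/\Omega^{(\alpha)}_{\lambda,\mu}$ is an $r$-th order operator and $s=n-D+r_1$. The weight $W^{(\alpha)}_{\lambda,\mu}=x^{\alpha+r}e^{-x}/(\Omega^{(\alpha)}_{\lambda,\mu})^2$ is integrable on $(0,\infty)$ precisely because $\alpha+r>-1$, and the intertwining of $\mathcal A$ with the Laguerre operator, combined with the orthogonality of the classical $L^{(\alpha)}_s$, gives
\[
  \int_0^\infty y_n(x)\,v(x)\,W^{(\alpha)}_{\lambda,\mu}(x)\,dx \;=\; 0
  \qquad\text{for all } v\in V_n:=\mathrm{span}\{L^{(\alpha)}_{\lambda,\mu,m}: m\in\mathbb N_{\lambda,\mu},\ m<n\},
\]
whenever the integral converges. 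I would then run the standard contradiction: assuming $K$ is too small, let $t_1<\dots<t_K$ be the sign changes and set $g=\pm\,\Omega_{\mathrm{pos}}(x)^2\prod_{i=1}^{K}(x-t_i)$, where $\Omega_{\mathrm{pos}}$ collects the zeros of $\Omega^{(\alpha)}_{\lambda,\mu}$ lying in $(0,\infty)$. The factor $\prod(x-t_i)$ makes $y_n g$ of constant sign on $(0,\infty)$, the square $\Omega_{\mathrm{pos}}^2\ge 0$ does not disturb this sign and at the same time cancels the double poles of $W^{(\alpha)}_{\lambda,\mu}$, so that $\int_0^\infty y_n g\,W^{(\alpha)}_{\lambda,\mu}\neq 0$; on the other hand, arranging $g\in V_n$ forces the same integral to vanish, a contradiction.

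The main obstacle is exactly the reconciliation in this last step, which is also where the losses $2D$ and $r_2$ enter. Insisting that $g\in V_n$ (so that orthogonality applies), that $\Omega_{\mathrm{pos}}^2\mid g$ (so that the integral converges when $\lambda$ is not even), and that $g$ carry the prescribed $K$ sign changes imposes linear constraints whose total codimension is controlled by $2\deg\Omega_{\mathrm{pos}}\le 2D$, while the prefactor $e^{-r_2 x}$ and the behaviour of $V_n$ as $x\to\infty$ absorb a further $r_2$; this degrades the ideal Sturm--Liouville count $n-D$ (valid when $\lambda$ is even, by \eqref{eq:NumRegZer}) to $n-2D-r_2$. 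When $\lambda$ is even the poles are absent, $\Omega_{\mathrm{pos}}\equiv 1$, and the scheme collapses to the classical quasi-orthogonality argument; the real work is therefore to make the convergence and the membership $g\in V_n$ rigorous precisely when $\Omega^{(\alpha)}_{\lambda,\mu}$ has zeros in $(0,\infty)$.
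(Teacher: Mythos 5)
Your overall scheme (a quasi-orthogonality relation forces sign changes, then a multiplicity count yields infinitely many simple zeros) matches the paper's in outline, and your second paragraph is fine --- the bound $K'\ge n-3(|\lambda|+|\mu|)-\tfrac32 r_2$ is even slightly sharper than the paper's $\tfrac{2n}{3}-t$. But the engine you propose for the sign-change estimate is not established, and your closing paragraph concedes exactly the step where the content of the theorem lives. Concretely: (a) the relation $\int_0^\infty y_n\,v\,W^{(\alpha)}_{\lambda,\mu}\,dx=0$ for $v\in V_n$ is only available (Lemma \ref{lem:ELP}) when $\alpha>-1$ and $\lambda$ is even, i.e.\ precisely when $\Omega^{(\alpha)}_{\lambda,\mu}$ has no zeros on $[0,\infty)$ and Sturm--Liouville theory already gives the exact count \eqref{eq:NumRegZer}; in the remaining cases the integrand has non-integrable singularities at the positive real zeros of $\Omega^{(\alpha)}_{\lambda,\mu}$, so the orthogonality you invoke is not known to hold and may not even make sense. (b) Even granting it, $V_n$ is a proper subspace of the polynomials of degree $<n$ (its codimension reflects the $|\lambda|+|\mu|$ missing degrees), so your test function $g=\pm\,\Omega_{\mathrm{pos}}^2\prod_i(x-t_i)$ has no reason to lie in $V_n$; the ``codimension count'' by which you extract the constants $2(|\lambda|+|\mu|)$ and $r_2$ is an assertion, not an argument.

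The paper sidesteps all of this with Lemma \ref{lem:XLPLinCom}: expanding the Wronskian in \eqref{def:XLP1} along its last column, converting $\tfrac{d^j}{dx^j}L_s^{(\alpha)}$ into $(-1)^jL^{(\alpha+j)}_{s-j}$, lifting every parameter to $\alpha+r$ via \cite[(5.1.13)]{Szego}, and absorbing the polynomial coefficients with the three-term recurrence shows that $L^{(\alpha)}_{\lambda,\mu,n}$ is a linear combination of the \emph{classical} polynomials $L^{(\alpha+r)}_{n-t},\dots,L^{(\alpha+r)}_n$ with $t=2(|\lambda|+|\mu|)+r_2$. Quasi-orthogonality then holds against the classical weight $x^{\alpha+r}e^{-x}$, which is positive and integrable on $(0,\infty)$ under the sole hypothesis $\alpha+r>-1$: there are no poles to cancel, every polynomial of degree $<n-t$ is an admissible test function, and the constant $t$ comes out of explicit degree bookkeeping (via Proposition \ref{prop:degdet}) rather than a heuristic. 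To salvage your route you would essentially have to prove this lemma anyway; as written, the proposal has a genuine gap at its central step.
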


Now we are able to state our asymptotic results. These results justify the 
conjecture \cite[Conjecture 1.1]{Kuijlaars_Milson} for exceptional Laguerre polynomials. 
We prove that the regular zeros of the exceptional Laguerre polynomial have the same
 asymptotic behavior as the zeros of their classical counterpart and the exceptional 
zeros converge to the (simple) zeros of the generalized Laguerre polynomial.

\subsection{Mehler-Heine asymptotics}
For Laguerre polynomials we have the Mehler-Heine asymptotics 
for all $\alpha\in\mathbb{R}$, see Theorem \ref{thm:LagMehlerHeine} below. 
This result can be generalized to exceptional Laguerre polynomials. 
We use $J_{\nu}$ to denote the Bessel function of the first kind of order 
$\nu\in\mathbb{R}$ \cite{Watson}.

\begin{theorem} \label{thm:XLPMehlerHeine}
Take $\alpha\in\mathbb{R}$, then one has
\begin{equation}\label{eq:XLPMehlerHeine}
	\lim_{n \to \infty} \frac{(-1)^r}{n^{\alpha+r}} L^{(\alpha)}_{\lambda, \mu,n} \left( \frac{x}{4n} \right)
	= \Omega^{(\alpha)}_{\lambda,\mu}(0) 2^{\alpha+r} x^{- \frac{\alpha + r}{2}} J_{\alpha + r}(\sqrt{x}),
\end{equation}
uniformly for $x$ in compact subsets of the complex plane.
\end{theorem}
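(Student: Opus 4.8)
The plan is to substitute $x \mapsto x/(4n)$ directly into the determinantal definition \eqref{def:XLP1}, expand the $(r+1)\times(r+1)$ Wronskian along its last column, and apply the classical Mehler--Heine theorem (Theorem \ref{thm:LagMehlerHeine}) to each entry of that column. Writing $g = L^{(\alpha)}_s$ with $s = n - |\lambda| - |\mu| + r_1$, the last column of $\Wr[f_1,\dots,f_r,g]$ has entries $g^{(k)}$, $k = 0,\dots,r$, and by iterating the derivative rule \eqref{app:eq:diff1} we have $g^{(k)} = (-1)^k L^{(\alpha+k)}_{s-k}$. Cofactor expansion along the last column then gives
\[
\Wr[f_1,\dots,f_r,g] = \sum_{k=0}^r (-1)^{k+r}\, g^{(k)}\, D_k,
\]
where $D_k$ is the $r\times r$ determinant built from $f_1,\dots,f_r$ and the derivative orders $\{0,1,\dots,r\}\setminus\{k\}$. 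The functions $f_1,\dots,f_r$ do not depend on $n$, so each $D_k$ evaluated at $x/(4n)$ converges to the constant $D_k(0)$; in particular the term $k=r$ has $D_r = \Wr[f_1,\dots,f_r]$, whose value at the origin is exactly $\Omega^{(\alpha)}_{\lambda,\mu}(0)$ by \eqref{eq:OmegaLaMu}.

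Next I would isolate the growth in $n$. Since $s-k = n + O(1)$, so that $(s-k)/n \to 1$, I first record the following consequence of the classical theorem: for a fixed parameter $\beta \in \mathbb{R}$ and any degree sequence $N = N(n)$ with $N/n \to 1$,
\[
\frac{1}{n^{\beta}} L^{(\beta)}_N\!\left(\frac{x}{4n}\right) \longrightarrow 2^{\beta}\, x^{-\beta/2} J_{\beta}(\sqrt{x})
\]
uniformly on compact subsets of $\mathbb{C}$. This follows from Theorem \ref{thm:LagMehlerHeine} by setting $y = xN/n$, so that $x/(4n) = y/(4N)$, using $N^{\beta}/n^{\beta} \to 1$ together with $y \to x$ and the fact that $y \mapsto 2^{\beta} y^{-\beta/2} J_{\beta}(\sqrt{y})$ is entire in $y$. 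Applying this with $\beta = \alpha+k$ and $N = s-k$ yields, for each $k=0,\dots,r$,
\[
\frac{1}{n^{\alpha+k}}\, g^{(k)}\!\left(\frac{x}{4n}\right) \longrightarrow (-1)^k\, 2^{\alpha+k}\, x^{-(\alpha+k)/2} J_{\alpha+k}(\sqrt{x})
\]
uniformly on compacts.

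Finally I would combine the pieces. Dividing the cofactor expansion by $n^{\alpha+r}$, the $k$-th summand carries a factor $n^{\alpha+k}/n^{\alpha+r} = n^{k-r}$, so every term with $k<r$ tends to $0$ (its normalized derivative factor converges and $D_k(x/(4n))$ stays bounded), while only the term $k=r$ survives, contributing $\Omega^{(\alpha)}_{\lambda,\mu}(0)$. Since the sum is finite, the convergence is uniform on compacts, and using $e^{-r_2 x/(4n)} \to 1$ this leaves
\[
\frac{1}{n^{\alpha+r}} L^{(\alpha)}_{\lambda,\mu,n}\!\left(\frac{x}{4n}\right) \longrightarrow 2^{\alpha+r}\, x^{-(\alpha+r)/2} J_{\alpha+r}(\sqrt{x})\,\Omega^{(\alpha)}_{\lambda,\mu}(0),
\]
which after multiplying by $(-1)^r$ is exactly \eqref{eq:XLPMehlerHeine}. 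The main technical obstacle is the bookkeeping in the auxiliary uniform limit: one must apply Mehler--Heine with a degree $N = s-k$ that is shifted from $n$ and at the argument $x/(4n)$ rather than the matched argument $x/(4N)$, and verify that the resulting convergence is uniform in $x$ on compacts so that it may be inserted term-by-term into the determinantal sum. Everything else reduces to the elementary observation that the last column dominates the expansion at the single rate $n^{\alpha+r}$.
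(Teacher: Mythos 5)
Your proof is correct and follows essentially the same route as the paper: expand the Wronskian along the entries of the added Laguerre polynomial, note that the cofactors are independent of $n$ so that after dividing by $n^{\alpha+r}$ only the top-derivative term survives, and invoke the classical Mehler--Heine theorem; your only variation is to rewrite $g^{(k)}$ as $(-1)^k L^{(\alpha+k)}_{s-k}$ via \eqref{eq:Lagder} and apply Mehler--Heine with shifted parameter and degree, whereas the paper differentiates the uniform limit $r$ times and then uses the Bessel identity $\frac{d}{dx}\bigl(x^{-\alpha/2}J_{\alpha}(\sqrt{x})\bigr)=-\tfrac12 x^{-(\alpha+1)/2}J_{\alpha+1}(\sqrt{x})$ --- an alternative the paper itself points out in its proof. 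One cosmetic slip: the right-hand side of your final display should carry the factor $(-1)^r$ coming from $g^{(r)}=(-1)^r L^{(\alpha+r)}_{s-r}$ (your intermediate display has it, the last one drops it), and it is exactly this factor that is cancelled by the prefactor $(-1)^r$ in \eqref{eq:XLPMehlerHeine}, as your closing sentence indicates.
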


The function $x^{- \frac{\alpha + r}{2}} J_{\alpha + r}(\sqrt{x})$ is an entire 
function in the complex plane with an infinite number of zeros on the positive real line
in case $\alpha + r > -1$, and no other zeros. All zeros are simple. 
Therefore, if we apply Hurwitz theorem 
\cite[Theorem 1.91.3]{Szego}, we obtain the following convergence
property for the regular zeros.

\begin{corollary} \label{cor:XLPRegularZeros}
Assume $\alpha +r>-1$. For a positive integer $k$ and $n \in \mathbb N_{\lambda,\mu}$
large enough, let $x_{k,n}^{(\alpha)}$ denote the $k$th positive real zero 
of $L_{\lambda,\mu,n}^{(\alpha)}$, see also Theorem \ref{thm:XLPN(n)}. 
If $\Omega^{(\alpha)}_{\lambda,\mu}(0)\neq 0$, then we have
\begin{equation*}
	\lim_{n \to \infty} \sqrt{4n x^{(\alpha)}_{k,n}} = j_{\alpha+r,k},
\end{equation*}
where $j_{\alpha+r,k}$ is the $k$th positive zero of the Bessel function $J_{\alpha+r}$.
\end{corollary}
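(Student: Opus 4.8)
The plan is to obtain Corollary~\ref{cor:XLPRegularZeros} directly from the Mehler--Heine asymptotics of Theorem~\ref{thm:XLPMehlerHeine} together with Hurwitz's theorem, exactly as the surrounding text anticipates. First I would introduce the rescaled functions
\[
P_n(x) := \frac{(-1)^r}{n^{\alpha+r}} L^{(\alpha)}_{\lambda,\mu,n}\!\left(\frac{x}{4n}\right),
\qquad
F(x) := \Omega^{(\alpha)}_{\lambda,\mu}(0)\, 2^{\alpha+r} x^{-\frac{\alpha+r}{2}} J_{\alpha+r}(\sqrt{x}),
\]
so that Theorem~\ref{thm:XLPMehlerHeine} reads $P_n \to F$ uniformly on compact subsets of $\mathbb{C}$. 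The key elementary observation is that, since $x \mapsto x/(4n)$ is an increasing linear change of variable, the positive real zeros of $P_n$ are precisely the numbers $4n\,x^{(\alpha)}_{k,n}$, listed in the same order; in particular the $k$th positive real zero of $P_n$ equals $4n\,x^{(\alpha)}_{k,n}$.

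Next I would record the relevant properties of the limit $F$, all stated in the paragraph preceding the corollary. Because $\alpha+r>-1$, the entire function $x^{-\frac{\alpha+r}{2}} J_{\alpha+r}(\sqrt{x})$ has zeros exactly at the points $j_{\alpha+r,k}^2$ for $k=1,2,\dots$, these zeros are simple, and there are no others. The hypothesis $\Omega^{(\alpha)}_{\lambda,\mu}(0)\neq 0$ guarantees $F\not\equiv 0$ (and also that $F(0)\neq 0$, so the origin is not a limit of regular zeros), which is exactly what is needed for Hurwitz's theorem to be applicable.

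Then comes the core of the argument. Fix $k$ and choose a small disk $D$ centred at $j_{\alpha+r,k}^2$ that contains no other zero of $F$ and on whose boundary $F$ does not vanish. By Hurwitz's theorem \cite[Theorem 1.91.3]{Szego} and the uniform convergence $P_n\to F$, for all sufficiently large $n$ the function $P_n$ has exactly one zero in $D$ (counting multiplicity), since $j_{\alpha+r,k}^2$ is a simple zero of $F$. As $\alpha\in\mathbb{R}$, the polynomial $L^{(\alpha)}_{\lambda,\mu,n}$ has real coefficients, hence so does $P_n$, and its non-real zeros occur in conjugate pairs; a single zero inside a disk centred on the positive real axis is therefore forced to be real and, for $D$ small and $n$ large, positive. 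Applying this to every zero of $F$ inside a fixed large disk, and using the complementary half of Hurwitz's theorem to rule out zeros of $P_n$ in the region where $F\neq 0$, one sees that the positive real zeros of $P_n$ inside that disk are in order-preserving bijection with the points $j_{\alpha+r,\ell}^2$ lying there. Consequently the $k$th positive real zero of $P_n$ tends to $j_{\alpha+r,k}^2$, that is $4n\,x^{(\alpha)}_{k,n}\to j_{\alpha+r,k}^2$, and taking square roots yields the claim; Theorem~\ref{thm:XLPN(n)} guarantees that $x^{(\alpha)}_{k,n}$ exists once $n$ is large.

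The step I expect to require the most care is the book-keeping that matches the $k$th zero of $P_n$ with the $k$th zero of $F$. This is where the reality of the coefficients and the simplicity of the Bessel zeros are essential: they ensure that each cluster produced by Hurwitz consists of a single genuine positive real zero, with no interference from the rescaled exceptional zeros. The latter cannot accumulate anywhere $F\neq 0$, so within any fixed disk the count of zeros of $P_n$ is exhausted by the regular ones; equivalently, the factor $4n$ drives the exceptional zeros out of every fixed compact set, so they play no role in the limit.
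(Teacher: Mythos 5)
Your proposal is correct and follows essentially the same route as the paper: apply Hurwitz's theorem to the Mehler--Heine limit of Theorem \ref{thm:XLPMehlerHeine}, use that the zeros of $x^{-(\alpha+r)/2}J_{\alpha+r}(\sqrt{x})$ are simple and positive so that each attracts exactly one zero of the rescaled polynomial, and invoke the reality of the coefficients (conjugate pairs) to force that attracted zero to be real. Your extra book-keeping about ruling out stray zeros where $F\neq 0$ and matching the $k$th zero with $j_{\alpha+r,k}^2$ is a slightly more explicit rendering of what the paper leaves implicit, but it is the same argument.
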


\subsection{Weak macroscopic limit of the regular zeros}
Whenever $\alpha+r>-1$ the number of regular zeros $N(n)$ tends to infinity 
as $n$ tends to infinity by Theorem \ref{thm:XLPN(n)}. The weak scaling limit of the 
zero-counting measure  of the regular zeros is the Marchenko-Pastur distribution 
$\frac{1}{2\pi} \sqrt{\frac{4-x}{x}} dx$. This is a generalization 
of its classical counterpart, see Theorem \ref{thm:LagMarchenkoPastur} below.

\begin{theorem} \label{thm:XLPMarchenkoPastur}
Take $\alpha\in\mathbb{R}$ such that $\alpha+r>-1$.	
Let $0<x^{(\alpha)}_{1,n}\leq \cdots \leq x^{(\alpha)}_{N(n),n}$ denote the regular 
zeros of the exceptional Laguerre polynomial $L^{(\alpha)}_{\lambda, \mu,n}$ 
where $n\in\mathbb{N}_{\lambda,\mu}$. Then for every bounded continuous function 
$f$ on the positive real line, 
\begin{equation} \label{eq:XLPMarchenkoPastur}
	\lim_{n\to\infty}\frac{1}{N(n)}\sum_{j=1}^{N(n)}f\left(\frac{x^{(\alpha)}_{j,n}}{N(n)}\right)
	=\frac{1}{2\pi}\int_{0}^{4}f(x)\sqrt{\frac{4-x}{x}}dx.
\end{equation}
\end{theorem}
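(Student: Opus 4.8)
The plan is to deduce the result from the classical Marchenko--Pastur theorem for ordinary Laguerre polynomials (Theorem~\ref{thm:LagMarchenkoPastur}) by showing that $L^{(\alpha)}_{\lambda,\mu,n}$ has, up to an asymptotically negligible modification, the same macroscopic zero distribution as a classical Laguerre polynomial of nearly the same degree. First I would note that scaling by $N(n)$ and by $n$ are interchangeable in the limit: since $N(n) = n-|\lambda|-|\mu|$ and $s = n-|\lambda|-|\mu|+r_1$ differ from $n$ only by the fixed constants attached to $\lambda,\mu$, all three quantities are $n+O(1)$. Moreover the number of exceptional zeros is exactly $|\lambda|+|\mu|$, a fixed finite number, so their total contribution to the normalized counting measure $\frac{1}{N(n)}\sum_j \delta_{x^{(\alpha)}_{j,n}/N(n)}$ is $O(1/N(n)) \to 0$. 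Hence it suffices to prove that the \emph{full} zero-counting measure of $L^{(\alpha)}_{\lambda,\mu,n}(N(n)\,\cdot\,)$ converges weak-$\ast$ to the Marchenko--Pastur law; since that law is supported on $[0,4]\subset(0,\infty)$, it then automatically coincides with the limit of the regular zeros.

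Next I would convert the Wronskian into a bounded linear combination of classical Laguerre polynomials of comparable degree. Expanding the determinant in \eqref{def:XLP1} along its last column and using $\frac{d^j}{dx^j}L^{(\alpha)}_s = (-1)^j L^{(\alpha+j)}_{s-j}$, which follows by iterating \eqref{app:eq:diff1}, one obtains
\[
	L^{(\alpha)}_{\lambda,\mu,n}(x) = (-1)^r \sum_{j=0}^{r} \tilde W_j(x)\, L^{(\alpha+j)}_{s-j}(x),
\]
where $\tilde W_j(x) = e^{-r_2 x}$ times the $r\times r$ minor of $\bigl(f_i^{(k)}\bigr)$ obtained by deleting the row of $j$-th derivatives. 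Each $\tilde W_j$ is a polynomial (the $r_2$ factors $e^x$ carried by the functions \eqref{eq:fj2} cancel the prefactor $e^{-r_2 x}$) and, crucially, is \emph{independent of $n$}. This exhibits $L^{(\alpha)}_{\lambda,\mu,n}$ as a fixed-length combination of classical Laguerre polynomials of degrees $s,s-1,\dots,s-r$, all equal to $n+O(1)$, with coefficients of bounded degree.

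I would then pass to the logarithmic potential. Setting $x = N(n)z$ and using the Plancherel--Rotach asymptotics of classical Laguerre polynomials, all the factors $L^{(\alpha+j)}_{s-j}(N(n)z)$ for $j=0,\dots,r$ share one and the same leading exponential rate governed by the Marchenko--Pastur potential, so $\frac{1}{N(n)}\log\bigl|L^{(\alpha+j)}_{s-j}(N(n)z)\bigr|$ has a common $L^1_{\mathrm{loc}}$ limit, equal to that of $\frac{1}{N(n)}\log\bigl|L^{(\alpha)}_{n}(N(n)z)\bigr|$. The polynomial coefficients $\tilde W_j(N(n)z)$ grow only polynomially in $n$, contributing $O(\log N(n))$ to the logarithm, hence $O(\log N(n)/N(n)) \to 0$ after normalization. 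Therefore $\frac{1}{N(n)}\log\bigl|L^{(\alpha)}_{\lambda,\mu,n}(N(n)z)\bigr|$ has the same limit as in the classical case, and by the standard equivalence between convergence of normalized logarithmic potentials and weak-$\ast$ convergence of zero-counting measures, together with Theorem~\ref{thm:LagMarchenkoPastur} identifying the limit, the scaled counting measure converges to $\frac{1}{2\pi}\sqrt{\frac{4-x}{x}}\,dx$. With the reduction of the first paragraph this yields \eqref{eq:XLPMarchenkoPastur}.

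The main obstacle is the oscillatory interval $(0,4)$: there the summands $\tilde W_j L^{(\alpha+j)}_{s-j}$ oscillate and could cancel, so one cannot simply read off the dominant term pointwise. I would circumvent this by working with the potential in $L^1_{\mathrm{loc}}$ rather than pointwise on the cut, proving the upper bound $\frac{1}{N(n)}\log\bigl|L^{(\alpha)}_{\lambda,\mu,n}(N(n)z)\bigr| \le \re\,\phi(z)+o(1)$ uniformly, combined with the matching lower bound off $[0,4]$ coming from a single term and from the convergence of the leading coefficient (Lemma~\ref{lem:GLP1}), and then invoking uniqueness of the limiting subharmonic function. An alternative route that sidesteps Plancherel--Rotach is the method of moments: compute $\frac{1}{N(n)^{k+1}}\sum_j\bigl(x^{(\alpha)}_{j,n}\bigr)^k$ from the power sums of the zeros and compare term by term with the classical Laguerre power sums using the bounded-degree correction above; here the obstacle becomes the (routine but lengthy) bookkeeping of the lower-order contributions.
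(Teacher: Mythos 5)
Your route is genuinely different from the paper's, and as written it has gaps that are not merely bookkeeping. The paper proves this theorem in a few lines from Lemma \ref{lem:XLPLinCom} and Corollary \ref{cor:BeardonDriver}: since $L^{(\alpha)}_{\lambda,\mu,n}$ is a linear combination of the consecutive orthogonal polynomials $L^{(\alpha+r)}_{n},\dots,L^{(\alpha+r)}_{n-t}$ with $t$ fixed, the Beardon--Driver theorem forces at least $n-t$ of the intervals between consecutive zeros of $L^{(\alpha+r)}_{n}$ to contain a zero of $L^{(\alpha)}_{\lambda,\mu,n}$; these sandwiched zeros inherit the Marchenko--Pastur limit from Theorem \ref{thm:LagMarchenkoPastur}, and the boundedly many remaining zeros are irrelevant because $f$ is bounded. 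No asymptotics of the polynomials themselves are needed.

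Your potential-theoretic argument could in principle be completed, but two steps are missing rather than merely deferred. First, the equivalence between $L^1_{\mathrm{loc}}$ convergence of $\tfrac{1}{N(n)}\log\bigl|L^{(\alpha)}_{\lambda,\mu,n}(N(n)z)\bigr|$ and weak-$*$ convergence of the scaled zero-counting measures requires tightness: you must rule out that some of the regular zeros escape to infinity faster than $O(n)$, and the same control is needed for your moment-method alternative. Nothing in your decomposition gives this a priori; in the paper it is exactly the interlacing with the zeros of $L^{(\alpha+r)}_{n}$ (all of which are $O(n)$) that supplies it for free. Second, your matching lower bound off $[0,4]$ ``coming from a single term'' is not justified: in $\mathbb{C}\setminus[0,4]$ all $r+1$ summands $\tilde W_j L^{(\alpha+j)}_{s-j}$ share the same exponential rate $e^{N(n)\re\phi(z)}$, so cancellation at leading exponential order is not excluded pointwise; you would have to argue instead via the leading coefficient (Lemma \ref{lem:GLP1}) together with the lower-envelope theorem, or show that the algebraic prefactors cannot sum to zero. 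A smaller point: $N(n)=n-|\lambda|-|\mu|$ and ``exactly $|\lambda|+|\mu|$ exceptional zeros'' hold only under the hypotheses of Lemma \ref{lem:ELP}; under the theorem's weaker hypothesis $\alpha+r>-1$ you only get $n-t\le N(n)\le n$ from Theorem \ref{thm:XLPN(n)}, which fortunately is all your first reduction actually needs. I would replace the second and third paragraphs of your argument by the interlacing step.
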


\subsection{Convergence of the exceptional zeros}
If $\alpha+r>-1$, then the exceptional zeros are attracted by simple zeros of the generalized Laguerre polynomials. We use $z_1, \ldots, z_{|\lambda|+|\mu|}$
to denote the zeros of the generalized Laguerre polynomial $\Omega^{(\alpha)}_{\lambda,\mu}$.

\begin{theorem} \label{thm:XLPExceptionalZeros}
Take $\alpha\in\mathbb{R}$ such that $\alpha+r>-1$. Let $z_j$ be a simple zero of the
generalized Laguerre polynomial $\Omega^{(\alpha)}_{\lambda,\mu}$ where 
$z_j\in\mathbb{C}\setminus[0,\infty)$. Then this zero $z_j$ attracts an exceptional zero 
of the exceptional Laguerre polynomial $L^{(\alpha)}_{\lambda,\mu,n}$ as $n$ tends to
infinity at a rate $O(n^{-1/2})$.
That is, for $n$ large enough, we have 
\begin{equation} \label{eq:XLPExceptionalZeros}
	\min_{k = 1, \ldots, n-N(n)} 
	\left| z_j-z^{(\alpha)}_{k,n} \right| < \frac{c}{\sqrt{n}},
\end{equation}
for some positive constant $c$ and where $z^{(\alpha)}_{1,n},\dots,z^{(\alpha)}_{n-N(n),n}$
denote the exceptional zeros of the exceptional Laguerre polynomial 
$L^{(\alpha)}_{\lambda,\mu,n}$ with $n\in\mathbb{N}_{\lambda,\mu}$.
\end{theorem}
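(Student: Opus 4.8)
The plan is to reduce the exceptional Laguerre polynomial, via a cofactor expansion of its defining Wronskian, to a linear combination with \emph{$n$-independent} coefficients of the classical Laguerre polynomial $L^{(\alpha)}_s$ and its derivatives, and then insert the large-degree asymptotics of $L^{(\alpha)}_s$ off the positive axis. First I would expand the Wronskian in \eqref{def:XLP1} along its last column. Since $f_1,\dots,f_r$ do not depend on $n$, this yields
\[
	e^{r_2 x}\,L^{(\alpha)}_{\lambda,\mu,n}(x)
	= \sum_{k=0}^{r} (-1)^{k+r}\, M_k(x)\,\frac{d^k}{dx^k} L^{(\alpha)}_s(x),
\]
where each $M_k$ is an $n$-independent analytic function (a Wronskian-type minor of $f_1,\dots,f_r$), and in particular $M_r = \Wr[f_1,\dots,f_r] = e^{r_2 x}\,\Omega^{(\alpha)}_{\lambda,\mu}$. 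Thus $M_r$ vanishes exactly at the zeros of $\Omega^{(\alpha)}_{\lambda,\mu}$, and near the simple zero $z_j$ one has $M_r(x) = e^{r_2 z_j}\,(\Omega^{(\alpha)}_{\lambda,\mu})'(z_j)\,(x-z_j) + O((x-z_j)^2)$ with $(\Omega^{(\alpha)}_{\lambda,\mu})'(z_j) \neq 0$.

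Next I would use the derivative rule $\frac{d^k}{dx^k}L^{(\alpha)}_s = (-1)^k L^{(\alpha+k)}_{s-k}$ together with Perron's asymptotic formula \cite[Theorem 8.22.3]{Szego}, valid uniformly on compact subsets of $\mathbb C\setminus[0,\infty)$, to obtain, on a fixed compact neighbourhood $K$ of $z_j$ disjoint from $[0,\infty)$, the uniform ratio estimate $\frac{(L^{(\alpha)}_s)^{(k)}(x)}{L^{(\alpha)}_s(x)} = \bigl(-\sqrt{s}\,(-x)^{-1/2}\bigr)^k\bigl(1+O(s^{-1/2})\bigr)$. The same formula shows that $L^{(\alpha)}_s$ has no zeros on $K$ once $s$ is large (its leading term is nonvanishing), so every zero of $L^{(\alpha)}_{\lambda,\mu,n}$ in $K$ must come from the bracketed combination above.

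I would then localize by setting $x = z_j + \zeta/\sqrt{n}$ and dividing by $L^{(\alpha)}_s(x)$ and by the constant $c_n = \bigl(-\sqrt{s}\,(-z_j)^{-1/2}\bigr)^{r-1}$, which is of exact order $n^{(r-1)/2}$. All terms with $k\le r-2$ are $O(n^{(r-2)/2})$, and the Perron errors also contribute only at that lower order; because $M_r(x)=O(n^{-1/2})$ near $z_j$, the $k=r$ and $k=r-1$ terms are both of exact order $n^{(r-1)/2}$ and survive. A short computation shows that the rescaled polynomial converges, uniformly for $\zeta$ in compact sets, to the affine function $A\zeta + B$ with $A = -e^{r_2 z_j}\,(\Omega^{(\alpha)}_{\lambda,\mu})'(z_j)\,(-z_j)^{-1/2}$ and $B = -M_{r-1}(z_j)$; since $z_j$ is a simple zero and $z_j\neq 0$, the coefficient $A$ is nonzero. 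Applying Hurwitz's theorem \cite[Theorem 1.91.3]{Szego}, exactly as in Corollary \ref{cor:XLPRegularZeros}, the limit $A\zeta+B$ has a zero at $\zeta_0=-B/A$, so for large $n$ the rescaled polynomial has a zero $\zeta_n\to\zeta_0$; then $x_n = z_j + \zeta_n/\sqrt{n}$ is a zero of $L^{(\alpha)}_{\lambda,\mu,n}$ with $|x_n-z_j| = |\zeta_n|/\sqrt{n} = O(n^{-1/2})$, and since $\mathrm{dist}(z_j,[0,\infty))>0$ it lies off $[0,\infty)$ for large $n$, hence is an exceptional zero, giving \eqref{eq:XLPExceptionalZeros}.

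The delicate step is the bookkeeping in this localization: I must ensure that the $O(s^{-1/2})$ relative errors from Perron's formula, once multiplied by the large factors $(\sqrt{s})^k$, do not overwhelm the order-$n^{(r-1)/2}$ balance that produces the zero. The mechanism that saves the argument is precisely that the top ($k=r$) term carries the prefactor $M_r(x)$, which is itself $O(n^{-1/2})$ near $z_j$ because $\Omega^{(\alpha)}_{\lambda,\mu}(z_j)=0$; this demotes it to the same order as the $k=r-1$ term, so the two balance, and the simplicity of the zero guarantees that the resulting linear coefficient $A$ does not degenerate. Verifying this dominant balance and the uniformity of the convergence is the heart of the proof, while the remaining normalization and the Hurwitz conclusion are routine.
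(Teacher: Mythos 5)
Your argument is correct, but it is a genuinely different proof from the one in the paper. The paper's proof is global and spectral: it first establishes (Lemma \ref{lem:Residue}) that $\bigl(L^{(\alpha)}_{\lambda,\mu,n}\bigr)^2 W^{(\alpha)}_{\lambda,\mu}$ has vanishing residues at the zeros of $\Omega^{(\alpha)}_{\lambda,\mu}$, using trivial monodromy of the Darboux--Crum transformed Schr\"odinger operator \cite{Duistermaat_Grunbaum,GarciaFerrero_GomezUllate_Milson}; this gives the electrostatic identity \eqref{eq:logder2}, and the $\gtrsim c\sqrt{n}$ regular zeros in $[0,1]$ (via Corollary \ref{cor:BeardonDriver}) then force the bounded collection of exceptional zeros to supply a compensating term of size $\sqrt{n}$, i.e.\ one zero within $O(n^{-1/2})$ of $z_j$. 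You instead perform a local Mehler--Heine-type analysis at $z_j$: the cofactor expansion (which the paper also uses, see \eqref{eq:XLPdecomposition1}), Perron's outer asymptotics for $L^{(\alpha)}_s$ via $\frac{d^k}{dx^k}L^{(\alpha)}_s=(-1)^kL^{(\alpha+k)}_{s-k}$, the rescaling $x=z_j+\zeta/\sqrt{n}$, and Hurwitz. Your dominant-balance bookkeeping is sound: the relative Perron errors stay relative, the $k\le r-2$ terms are down by $n^{-1/2}$ after normalization, and the simplicity of $z_j$ together with $z_j\neq 0$ gives $A\neq 0$, so the limit $A\zeta+B$ is a nondegenerate affine function. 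What each approach buys: the paper's route needs the deeper trivial-monodromy input but only crude zero counts, and it parallels the Hermite-case proof of \cite{Kuijlaars_Milson}; your route is more elementary and self-contained (no Sturm--Liouville or residue machinery, no appeal to Theorem \ref{thm:XLPN(n)}), and it actually proves more --- for large $n$ \emph{exactly one} zero of $L^{(\alpha)}_{\lambda,\mu,n}$ lies near $z_j$, with the explicit first correction $z^{(\alpha)}_{k,n}=z_j+\zeta_0 n^{-1/2}+o(n^{-1/2})$, $\zeta_0=-B/A$, sharpening the mere $O(n^{-1/2})$ attraction of \eqref{eq:XLPExceptionalZeros}. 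The only points to spell out in a final write-up are the uniformity of Perron's formula on the compact set $K$ (Szeg\H{o} \cite[Theorem 8.22.3]{Szego}) and the observation that zeros of the normalized quotient coincide with zeros of $L^{(\alpha)}_{\lambda,\mu,n}$ on $K$ because $L^{(\alpha)}_s$ is zero-free there for large $s$; both are routine.
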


\begin{figure}[t]
	\centering
	\includegraphics[totalheight=8cm, trim=2.25cm 17.5cm 8.75cm 2cm,clip=true]{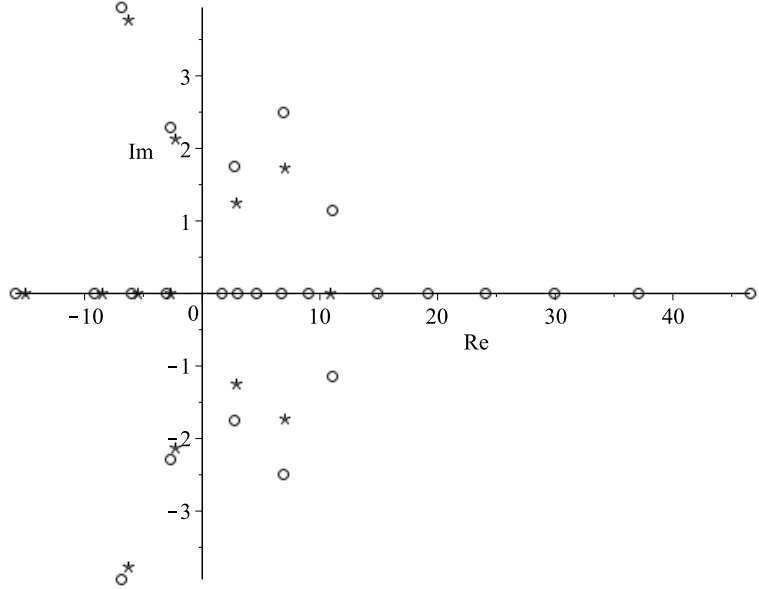}
	\caption{Zeros of the generalized (stars) and exceptional (open circles) Laguerre polynomial associated with $\lambda=(3,2)$, $\mu=(4,2,2)$, $\alpha=1$ and $n=25$.}
	\label{fig:1}
\end{figure}

In the situation of Lemma \ref{lem:ELP} we have that 
exceptional Laguerre polynomials form a complete set of orthogonal polynomials. 
Then we know that  $N(n) = n - |\lambda|- |\mu|$, if $n$ is large enough, 
and that the zeros of the
generalized Laguerre polynomial $\Omega_{\lambda,\mu}^{(\alpha)}$ are 
all in $\mathbb C \setminus [0,\infty)$.  If each of these $|\lambda| + |\mu|$ 
zeros is simple, it follows from Theorem \ref{thm:XLPExceptionalZeros}
that, for large $n$, each zero $z_j$ attracts exactly one exceptional zero 
of $L^{(\alpha)}_{\lambda,\mu,n}$. 
Hence, we can relabel the zeros of the exceptional Laguerre polynomial in such a way that $z_{j,n}^{(\alpha)}$ is close to $z_j$ and
\begin{equation*}
	z_{j,n}^{(\alpha)} = z_j + O\left(\frac{1}{\sqrt{n}}\right) \qquad
	\text{ as } n \to \infty.
\end{equation*}

Our results are numerically verified. In Figure \ref{fig:1} we plotted the zeros 
where we set $\alpha=1$, $\lambda=(3,2)$ and $\mu=(4,2,2)$. The thirteen zeros of the generalized Laguerre polynomial, which are indicated by a star, are all simple. The open circles are the zeros of the corresponding exceptional Laguerre polynomial of degree $25$. Note that there is one zero of the generalized Laguerre polynomial on the positive real line. Hence, the conditions of Theorem \ref{thm:XLPExceptionalZeros} are not satisfied. Nevertheless, it seems that this positive real zero attracts two exceptional zeros.

\subsection{Conjecture of simple zeros}
We do not have a proof that the zeros of $\Omega_{\lambda,\mu}^{(\alpha)}$ are simple, 
but we offer this  as a conjecture, based on numerical evidence.
\begin{conjecture}\label{conjecture}
Take $\alpha>-1$ and let $\lambda$ be an even partition. Then the zeros of the generalized Laguerre polynomial $\Omega^{(\alpha)}_{\lambda,\mu}$ are simple. 
\end{conjecture}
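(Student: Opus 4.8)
The plan is to pass to the Schr\"odinger form \eqref{eq:LagDV2} and to study the zeros of $\Omega^{(\alpha)}_{\lambda,\mu}$ through the local vanishing structure of an associated Wronskian of eigenfunctions. First I would record what the hypotheses buy us. By Lemma~\ref{lem:GLP3}, the assumptions $\alpha>-1$ and $\lambda$ even force $\Omega^{(\alpha)}_{\lambda,\mu}$ to have no zeros on $[0,\infty)$; since the polynomial is real, its zeros therefore lie in $\mathbb{C}\setminus[0,\infty)$ and occur in complex-conjugate pairs. It is worth stressing at the outset that the self-adjointness and positivity furnished by Lemma~\ref{lem:ELP} only control the behaviour on the real axis, so they exclude real multiple zeros but say nothing about the complex ones. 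The simplicity of the complex zeros is exactly what makes the statement delicate, and it is the reason we can only offer it as a conjecture.

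For the complex zeros I would use the substitution $x=z^{2}$ that turns \eqref{eq:LagDV1} into the Schr\"odinger operator \eqref{eq:LagDV2}. Writing $Q(z)=z^{2}+\frac{4\alpha^{2}-1}{4z^{2}}$, the eigenfunctions $\varphi^{(\alpha)}_{n_j}$ and $\psi^{(\alpha)}_{m_j}$ of Table~\ref{tab:2} all satisfy the common second-order equation $\chi''=(Q-E)\chi$ with pairwise distinct eigenvalues $E$. Using \eqref{eq:Wr1}--\eqref{eq:Wr2}, the Wronskian $W(z)=\Wr[\chi_{1},\dots,\chi_{r}]$ of these seeds equals $\Omega^{(\alpha)}_{\lambda,\mu}(z^{2})$ up to an explicit prefactor that is nonzero off $z=0$, so a zero $z_{0}\neq 0$ of $W$ has the same multiplicity as the corresponding zero of $\Omega^{(\alpha)}_{\lambda,\mu}$. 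The key tool is the classical description of the order of vanishing of a Wronskian: if $(a_{0}<a_{1}<\dots<a_{r-1})$ is the vanishing (gap) sequence of $V=\mathrm{span}(\chi_{1},\dots,\chi_{r})$ at $z_{0}$, then the order of $W$ at $z_{0}$ equals $\sum_{i=0}^{r-1}(a_{i}-i)$. Thus $W$ has a simple zero precisely when this sequence is $(0,1,\dots,r-2,r)$, and the task reduces to excluding every sequence with a larger gap.

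The heart of the argument is a local estimate at a regular point $z_{0}\neq 0$. Each individual seed solves a second-order ODE, so by uniqueness it vanishes there to order at most $1$; the danger comes from linear combinations $f=\sum_{j}c_{j}\chi_{j}$ across distinct eigenvalues, which may vanish to higher order. I would differentiate $f$ repeatedly, replacing $\chi_j''$ by $(Q-E_j)\chi_j$, so that the conditions $f(z_{0})=\dots=f^{(p)}(z_{0})=0$ transform into a confluent Vandermonde-type system in the jet data $\bigl(\chi_j(z_0),\chi_j'(z_0)\bigr)$ with nodes $E_{1},\dots,E_{r}$. Because the $E_{j}$ are pairwise distinct, a dimension count on the two arising Vandermonde kernels shows that generically no nonzero $(c_{j})$ can force vanishing beyond order $r$, and that no two independent combinations can create a lower double gap; this pins the gap sequence to $(0,1,\dots,r-2,r)$ and yields simplicity.

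The main obstacle is precisely that this dimension count is only generic. At the finitely many special points $z_{0}$ where the jet data satisfy an exceptional coincidence, the two Vandermonde kernels may meet nontrivially and a double gap is not excluded by counting alone; ruling these out requires the explicit arithmetic of the seeds rather than their generic behaviour, and I expect this to be the genuinely hard step, exactly parallel to Veselov's conjecture in the Hermite case \cite{Felder_Hemery_Veselov}. A promising route for the exceptional points is to reinterpret a multiple zero as a degenerate equilibrium of the rational Calogero--Moser-type system whose configurations are governed by $\Omega^{(\alpha)}_{\lambda,\mu}$, and to exclude such degeneracies using the even-partition hypothesis. A complete argument, however, appears to need input beyond the tools developed here, which is why the statement is offered only as a conjecture.
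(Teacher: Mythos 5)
The statement you were asked to prove is Conjecture~\ref{conjecture}, and the paper contains no proof of it: the authors offer it purely on numerical evidence, supported by the explicit counterexamples (such as $\Omega^{(5)}_{(3,1),\emptyset}$ and $\Omega^{(-7/4)}_{(1),(2)}$, all with cube factors) showing the hypotheses cannot be dropped, and by the remark that the type II case is settled in \cite[Proposition 4.3]{GomezUllate_Marcellan_Milson}. Your proposal correctly recognizes this and stops short of claiming a proof, which is the right outcome. The gap you identify yourself --- the confluent-Vandermonde dimension count is only generic, and the exceptional jet configurations cannot be excluded by counting alone --- is precisely the open content of the conjecture, in parallel with Veselov's still-open Hermite conjecture \cite{Felder_Hemery_Veselov}. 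Note in particular that your generic count never actually invokes the hypotheses $\alpha>-1$ and $\lambda$ even (except via Lemma~\ref{lem:GLP3} to locate the zeros), whereas the paper's counterexamples prove that any genuine argument must use them in an essential way; this is structural evidence that the hard step cannot be closed by your framework as it stands.

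Two technical corrections to the sketch. First, your claim that Lemma~\ref{lem:ELP} ``excludes real multiple zeros'' overreaches: Sturm--Liouville simplicity concerns the zeros of the eigenpolynomials $L^{(\alpha)}_{\lambda,\mu,n}$ inside the orthogonality interval $(0,\infty)$, where by Lemma~\ref{lem:GLP3} the polynomial $\Omega^{(\alpha)}_{\lambda,\mu}$ has no zeros at all; when $\mu$ is not even, $\Omega^{(\alpha)}_{\lambda,\mu}$ may have zeros on $(-\infty,0)$, these fall within the scope of the conjecture, and nothing in the paper's apparatus controls their multiplicity (the example $\Omega^{(-7/4)}_{(1),(2)}(x)=-\frac{1}{2}\left(x+\frac{3}{4}\right)^{3}$ shows a real triple zero once the hypotheses fail). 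Second, your gap-sequence reduction can be sharpened using machinery the paper does develop: in the proof of Lemma~\ref{lem:Residue}, trivial monodromy together with \cite[Proposition 3.3]{Duistermaat_Grunbaum}, applied to the potential \eqref{eq:Vlamu2}, shows that a zero of $\Omega^{(\alpha)}_{\lambda,\mu}$ of multiplicity $d_p$ must satisfy $2d_p=\nu_p(\nu_p+1)$ for a positive integer $\nu_p$, so only triangular multiplicities $1,3,6,\dots$ can occur --- consistent both with your computation that the gap sequence $(0,1,\dots,r-2,r)$ gives order $1$ and with every listed counterexample being a cube. The conjecture is therefore equivalent to excluding $\nu_p\geq 2$ under $\alpha>-1$ and $\lambda$ even, a formulation rather sharper than what your generic dimension count reaches, but one the paper likewise leaves open.
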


In Theorem \ref{thm:XLPExceptionalZeros}, we stated that the simple zeros of the generalized Laguerre polynomial attract an exceptional zero of the exceptional Laguerre polynomial. This result is comparable to the Hermite case \cite[Theorem 2.3]{Kuijlaars_Milson}. For the Hermite case, it was conjectured by Veselov \cite[Section 6]{Felder_Hemery_Veselov} that all zeros are simple. More concretely, he conjectured that the zeros of a Wronskian of Hermite polynomials are all simple, except possibly at the origin. As already said, we expect a similar conjecture to be true in our case, see Conjecture \ref{conjecture}. However, an important difference with the Hermite case is that we need a condition for $\lambda,\mu$ and $\alpha$ to be satisfied. We require a condition because of the following examples:
\begin{align*}
&\Omega^{(5)}_{(3,1),\emptyset}(x) = \frac{1}{8}(x-6)^3(x-14), \\
&\Omega^{(-2)}_{(2,2),\emptyset}(x) = \frac{1}{12}x^4, \\
&\Omega^{(-\frac{7}{4})}_{(1),(2)}(x) = -\frac{1}{2}\left(x+\frac{3}{4}\right)^3, \\
&\Omega^{(-\frac{13}{4})}_{(3),(3)}(x) = -\frac{1}{36} \left(x^2+\frac{15}{16}\right)^3.	
\end{align*}
It is clear that each of these generalized Laguerre polynomials has a non-simple 
zero, either real or non-real. Numerical simulations seem to suggest that every time 
that the exceptional Laguerre polynomials form a complete orthogonal system, 
the corresponding generalized Laguerre polynomial has simple zeros. 
This is the case when the conditions in Conjecture \ref{conjecture} are satisfied. Moreover, the conjecture holds true for type II exceptional Laguerre polynomials, see \cite[Proposition 4.3]{GomezUllate_Marcellan_Milson}.

\section{Zeros of exceptional Laguerre polynomials: Proofs}\label{sec:Zeros-Proofs}
In this section we give the proofs of the new results which were stated in the previous section.

\subsection{Proof of the lower bound of the regular zeros}\label{sec:ProofN(n)}
In this section we prove Theorem \ref{thm:XLPN(n)}. The proof
is based on the following lemma.

\begin{lemma}\label{lem:XLPLinCom}
Let $n\in\mathbb{N}_{\lambda,\mu}$ and take $\alpha\in\mathbb{R}$. 
Then the exceptional Laguerre polynomial $L^{(\alpha)}_{\lambda,\mu,n}$ is 
a linear combination of the Laguerre polynomials 
$L^{(\alpha+r)}_{n}, L^{(\alpha+r)}_{n-1},\dots,L^{(\alpha+r)}_{n-t}$ where 
\[ t=2(|\lambda|+|\mu|)+r_2 \]
is independent of $n$. 
\end{lemma}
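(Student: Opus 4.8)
The plan is to expand the defining Wronskian \eqref{def:XLP1} along its last column and to control each resulting term separately. Write $g = L^{(\alpha)}_s$ with $s = n-|\lambda|-|\mu|+r_1$, and recall from iterating \eqref{app:eq:diff1} that $g^{(i)} = (-1)^i L^{(\alpha+i)}_{s-i}$. Expanding the $(r+1)\times(r+1)$ determinant $\Wr[f_1,\dots,f_r,g]$ along the column of $g$ and absorbing the prefactor $e^{-r_2x}$ gives
\[
L^{(\alpha)}_{\lambda,\mu,n} = (-1)^r \sum_{i=0}^{r} L^{(\alpha+i)}_{s-i}\, P_i, \qquad P_i := e^{-r_2 x}\, W_i ,
\]
where $W_i$ is the minor of the Wronskian obtained by deleting the last column together with the row of $i$-th derivatives. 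Each $P_i$ depends only on $\alpha,\lambda,\mu$ and not on $n$. The strategy is then: (i) bound $\deg P_i$; (ii) rewrite $L^{(\alpha+i)}_{s-i}$ in the basis $\{L^{(\alpha+r)}_k\}$; and (iii) multiply by $P_i$, tracking how far the index $k$ can drop.

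For step (i) I would apply Proposition \ref{prop:degdet}. Indeed, $P_i$ is exactly a polynomial of the form \eqref{eq:defQ}: its columns are $R_1,\dots,R_{r_1},e^xR_{r_1+1},\dots,e^xR_r$ with $R_j(x)=L^{(\alpha)}_{n_j}(x)$ for $j\le r_1$ and $R_{r_1+j}(x)=L^{(\alpha)}_{m_j}(-x)$ for $j\le r_2$, while its rows are the $r$ derivative orders $\{0,1,\dots,r\}\setminus\{i\}$. Proposition \ref{prop:degdet} then bounds $\deg P_i$ by $\sum_j\deg R_j-\sum_{p=1}^{r_1}l_p-\binom{r_2}{2}$, where $l_1<\dots<l_r$ are those derivative orders. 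Since $\sum_j\deg R_j=\sum n_j+\sum m_j=|\lambda|+\binom{r_1}{2}+|\mu|+\binom{r_2}{2}$, and the sum of the smallest $r_1$ of the orders in $\{0,\dots,r\}\setminus\{i\}$ is always at least $\binom{r_1}{2}$, this yields the uniform bound $\deg P_i\le|\lambda|+|\mu|$ for every $i$.

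For steps (ii)--(iii) I would invoke two classical facts about Laguerre polynomials. First, the connection formula $L^{(\beta)}_m=\sum_{a\ge0}\frac{(\beta-\gamma)_a}{a!}L^{(\gamma)}_{m-a}$ (with $(\beta-\gamma)_a$ the Pochhammer symbol), applied with $\beta=\alpha+i$ and $\gamma=\alpha+r$: since $(i-r)_a$ vanishes for $a>r-i$, the polynomial $L^{(\alpha+i)}_{s-i}$ is a combination of $L^{(\alpha+r)}_k$ with $s-r\le k\le s-i$, the lowest index being $s-r$. Second, the three-term recurrence shows that multiplication by $x$ sends $L^{(\alpha+r)}_m$ into the span of $L^{(\alpha+r)}_{m-1},L^{(\alpha+r)}_m,L^{(\alpha+r)}_{m+1}$; iterating, multiplication by the polynomial $P_i$, of degree at most $|\lambda|+|\mu|$, lowers the index by at most $|\lambda|+|\mu|$. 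Hence each term $L^{(\alpha+i)}_{s-i}P_i$, and therefore $L^{(\alpha)}_{\lambda,\mu,n}$, lies in the span of the $L^{(\alpha+r)}_k$ with $k\ge (s-r)-(|\lambda|+|\mu|)$. A direct computation gives $(s-r)-(|\lambda|+|\mu|)=n-2(|\lambda|+|\mu|)-r_2=n-t$, while the top index is pinned at $n$ because $L^{(\alpha)}_{\lambda,\mu,n}$ has degree exactly $n$, so all coefficients of $L^{(\alpha+r)}_k$ with $k>n$ vanish automatically. This produces the claimed representation.

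The only genuinely delicate point is the uniform degree bound $\deg P_i\le|\lambda|+|\mu|$ in step (i): a priori each minor $P_i$ could have degree as large as $\sum_j\deg R_j$, and it is precisely the cancellation captured by Proposition \ref{prop:degdet} (the subtraction of $\sum l_p$ and $\binom{r_2}{2}$) that makes the bound independent of both $n$ and $i$. Everything else is bookkeeping: the column expansion, the two standard Laguerre identities, and the final arithmetic identity $s-r-(|\lambda|+|\mu|)=n-t$. I would also emphasize that no restriction on $\alpha$ is needed, since both the connection formula and the three-term recurrence hold for every value of the parameter.
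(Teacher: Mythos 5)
Your proof is correct and follows essentially the same route as the paper: expansion of the Wronskian along the last column, the degree bound on the minors via Proposition \ref{prop:degdet}, the derivative and connection formulas for Laguerre polynomials, and the three-term recurrence to control the index shift under multiplication by the polynomial cofactors. The only (harmless) variation is that you use the uniform bound $\deg P_i\le|\lambda|+|\mu|$ and pin the top index at $n$ by a degree argument, whereas the paper uses the sharper bound $\deg Q_j\le|\lambda|+|\mu|-r_1+\min\{j,r_1\}$ to get the upper limit $k\le n$ directly.
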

\begin{proof}
Set $s = n-|\lambda|-|\mu|+r_1$ and consider $L^{(\alpha)}_{\lambda,\mu,n}$ which 
is defined in \eqref{def:XLP1}. By expanding the Wronskian determinant along the 
last column, one obtains
\begin{equation}\label{eq:XLPdecomposition1}
	L^{(\alpha)}_{\lambda,\mu,n}(x) = 
	\sum_{j=0}^{r}Q_{j}(x)\frac{d^j}{dx^j}L^{(\alpha)}_{s}(x),
\end{equation}
where $Q_j$ is a polynomial such that 
\begin{equation} \label{eq:degQj}
	\deg Q_j \leq |\lambda|+|\mu|-r_1+\min\{j,r_1\}
\end{equation} 
which  is independent of $n$. This upper bound for the degree follows from 
Proposition \ref{prop:degdet}. Notice that $Q_0 = (-1)^{r_2} \Omega^{(\alpha+1)}_{\tilde{\lambda},\mu}$ where 
$\tilde{\lambda}_i=\lambda_i-1$ for $i=1,\dots,r_1$ and $Q_r=\Omega^{(\alpha)}_{\lambda,\mu}$. 
For both polynomials, the upper bound of the degree is attained because 
of Lemma \ref{lem:GLP1}.

The derivative of a Laguerre polynomial is again 
a Laguerre polynomial of lower degree, but with a shifted parameter, i.e.,
\begin{equation}\label{eq:Lagder}
	\frac{d^j}{dx^j} L^{(\alpha)}_s(x) = (-1)^j L^{(\alpha+j)}_{s-j}(x),
\end{equation}
see e.g. \cite[(5.1.14)]{Szego}. Thus \eqref{eq:XLPdecomposition1} becomes
\begin{equation}\label{eq:XLPdecomposition2}
	L^{(\alpha)}_{\lambda,\mu,n}(x) = \sum_{j=0}^{r}Q_{j}(x)(-1)^jL^{(\alpha+j)}_{s-j}(x).
\end{equation}
It is possible to express the Laguerre polynomial as a sum of other Laguerre polynomials 
with shifted parameters,
\begin{equation*}
	L^{(\alpha)}_n (x) = \sum_{k=0}^{l}(-1)^k \binom{l}{k} L^{(\alpha+l)}_{n-k}(x),
\end{equation*}
which holds for every positive integer $l$ and $\alpha\in\mathbb{R}$, see \cite[(5.1.13)]{Szego}.
Using this in   \eqref{eq:XLPdecomposition2} with $n=s-j$ and $l=r-j$, 
we obtain 
\begin{align} \nonumber
	L^{(\alpha)}_{\lambda,\mu,n}(x) & = 
	\sum_{j=0}^{r}Q_{j}(x)(-1)^j
	\sum_{k=0}^{r-j}(-1)^k \binom{s-j}{k} L^{(\alpha+r)}_{s-j-k}(x) \\
	& = \label{eq:LemmaN(n):Proof:3} 
	\sum_{j=0}^{r}\tilde{Q}_{j}(x)L^{(\alpha+r)}_{s-j}(x),
\end{align}
where $\tilde{Q}_j$ is a certain polynomial of degree 
\begin{equation} \label{eq:degQbound} 
	\deg \tilde{Q}_j \leq |\lambda|+|\mu|-r_1+\min\{j,r_1\}. 
	\end{equation}
Next we use the three term recurrence satisfied by the Laguerre polynomials
from which it follows that $x L_k^{(\alpha + r)}$ is a linear combination
of the polynomials of degrees $k+1$, $k$ and $k-1$. Using the recurrence repeatedly,
we see that $\tilde{Q}_j L_{s-j}^{(\alpha+r)}$ is a linear combination
of Laguerre polynomials $L^{(\alpha+r)}_k$ with $k$ in the range
\[ s-j - \deg \tilde{Q}_j \leq k \leq s-j + \deg \tilde{Q}_j. \]
For $j=0, \ldots, r$ we have by the definitions of $s$ and $t$ and
the degree bound \eqref{eq:degQbound} that
\[ s-j - \deg \tilde{Q}_j  \geq s- r - (|\lambda|+|\mu|) = n - t
	\]
and
\[ s-j + \deg \tilde{Q}_j \leq s +  (|\lambda|+|\mu|) - r_1 = n. \]
Thus  for each $j=0, \ldots, r$, we have that
$\tilde{Q}_j L_{s-j}^{(\alpha+r)}$ is a linear combination of $L_k^{(\alpha+r)}$ 
with $n-t \leq k \leq n$ and the lemma follows because of \eqref{eq:LemmaN(n):Proof:3}. 
\end{proof}

\begin{remark}
In \cite{Kuijlaars_Milson} there is an analogous result for exceptional
Hermite polynomials, see Lemma 4.1. However, the statement and proof of this 
result in \cite{Kuijlaars_Milson} contain a mistake. The 
exceptional Hermite polynomial of degree $n$ associated with a partition 
$\lambda$ is a linear
combination of the Hermite polynomials $H_n, \ldots, H_{n-t}$ where $t = 2|\lambda|$,
while it is stated in \cite{Kuijlaars_Milson} that $t= |\lambda| + r$,
where $r$ is the length of $\lambda$.
This mistake, however, does not affect the further results in \cite{Kuijlaars_Milson}.
\end{remark}

With Lemma \ref{lem:XLPLinCom}, we can prove  Theorem \ref{thm:XLPN(n)}.

\begin{proof}[Proof of Theorem \ref{thm:XLPN(n)}] 
From Lemma \ref{lem:XLPLinCom} and the orthogonality of the
polynomials $L_k^{(\alpha+r)}$ on $[0,\infty)$, which holds since $\alpha + r > -1$,
we obtain
\begin{equation} \label{eq:XLPN(n)proof1} 
	\int_0^{\infty} Q(x) L_{\lambda, \mu, n}^{(\alpha)}(x)  x^{\alpha+r} e^{-x} dx = 0,
\end{equation}
whenever $Q$ is a polynomial of degree $< n-t$ where the number $t$ is as in
Lemma \ref{lem:XLPLinCom}.
This forces $L_{\lambda, \mu, n}^{(\alpha)}$ to have at least 
$n-t$ zeros in $(0,\infty)$ with odd multiplicity. Otherwise we can
construct a polynomial of degree $< n-t$ in such a way that 
$Q L_{\lambda, \mu, n}^{(\alpha)}$ does not change sign in $(0,\infty)$
and this would contradict \eqref{eq:XLPN(n)proof1}. Thus
\[ N(n) \geq n-t = n - 2(|\lambda| + |\mu|) - r_2. \]
Moreover, the number of zeros with odd multiplicity is bounded by this number $n-t$. The number of zeros with multiplicity at least 3 is trivially bounded by $\frac{n}{3}$. Hence, the number of simple regular zeros is at least $\frac{2n}{3}-t$ and therefore tends to infinity as $n$ tends to infinity.
\end{proof}

There is another consequence of Lemma \ref{lem:XLPLinCom} that we state
here for future reference. 

\begin{corollary}\label{cor:BeardonDriver}
Take $\alpha\in\mathbb{R}$ such that $\alpha+r>-1$. Let $n\in\mathbb{N}_{\lambda,\mu}$ 
such that $n>2(|\lambda|+|\mu|)+r_2$. Let 
$0<a^{(\alpha+r)}_{1,n}<a^{(\alpha+r)}_{2,n}<\cdots<a^{(\alpha+r)}_{n,n}$ 
denote the real and simple zeros of the Laguerre polynomial $L^{(\alpha+r)}_{n}$. 
Then, at least $n-2(|\lambda|+|\mu|)-r_2$ intervals 
$(a^{(\alpha+r)}_{k,n},a^{(\alpha+r)}_{k+1,n})$, where $1\leq k<n$, contain a zero 
of the exceptional Laguerre polynomial $L^{(\alpha)}_{\lambda,\mu,n}$.
\end{corollary}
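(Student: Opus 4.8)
The plan is to combine the orthogonality already extracted in the proof of Theorem~\ref{thm:XLPN(n)} with a classical sign--change (Chebyshev system) argument evaluated at the Gauss--Laguerre nodes. Write $L := L^{(\alpha)}_{\lambda,\mu,n}$, $P := L^{(\alpha+r)}_n$, $w(x) = x^{\alpha+r}e^{-x}$, and set $t = 2(|\lambda|+|\mu|)+r_2$ as in Lemma~\ref{lem:XLPLinCom}. By that lemma $L$ lies in the span of $L^{(\alpha+r)}_{n},\dots,L^{(\alpha+r)}_{n-t}$, hence is orthogonal to every polynomial of degree $<n-t$ with respect to $w$; this is precisely \eqref{eq:XLPN(n)proof1}. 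Let $a_1<\dots<a_n$ be the zeros of $P$ and let $w_1,\dots,w_n>0$ be the associated Gauss--Laguerre weights, so that $\sum_j w_j\,g(a_j)=\int_0^\infty g\,w\,dx$ is exact whenever $\deg g\le 2n-1$. First I would apply this to $g=QL$ with $\deg Q\le n-t-1$; then $\deg g\le 2n-1$, and the integral orthogonality collapses to the finite relation $\sum_{j=1}^n w_j\,Q(a_j)\,L(a_j)=0$ for all such $Q$.

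Next I would read this linearly. The vector $v=(w_1L(a_1),\dots,w_nL(a_n))$ is orthogonal, in $\mathbb R^n$, to every evaluation vector $(Q(a_1),\dots,Q(a_n))$ with $\deg Q\le n-t-1$. Since the $a_j$ are distinct, these evaluation vectors span a subspace of dimension $n-t$, so $v$ is forced into its $t$--dimensional orthogonal complement. The key step is the standard Chebyshev--system fact: any nonzero $v\in\mathbb R^n$ with $\sum_j v_j a_j^{\,i}=0$ for $i=0,\dots,n-t-1$ must change sign at least $n-t$ times along $j=1,\dots,n$. I would establish this by contradiction: if $v$ had at most $n-t-1$ sign changes, I could construct a polynomial $p$ of degree $\le n-t-1$ with one simple zero inside each open interval where $v$ changes sign, so that $p(a_j)$ has the same sign as $v_j$ for every $j$; then $\sum_j v_j\,p(a_j)>0$, contradicting the orthogonality.

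Because each $w_j>0$, the sign changes of $v$ coincide with those of the sequence $(L(a_1),\dots,L(a_n))$, and a sign change between $L(a_k)$ and $L(a_{k+1})$ forces $L$ to vanish in the open interval $(a_k,a_{k+1})$. This produces at least $n-t=n-2(|\lambda|+|\mu|)-r_2$ distinct intervals containing a zero of $L$, which is exactly the assertion of the corollary.

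The main obstacle is the degenerate situation in which $L$ vanishes at one or more nodes $a_j$, so some entries of $v$ are zero and the clean interlacing bookkeeping breaks down. In the extreme case $v=0$, which occurs precisely when $P\mid L$, i.e.\ $L=c\,L^{(\alpha+r)}_n$, and more generally when isolated entries vanish, the naive count of interior zeros can drop. I would handle this by running the sign--change lemma on the subvector of nonzero entries only (a short argument shows that this subvector still satisfies $n-t$ moment conditions and hence has at least $n-t$ sign changes), and then attributing each node at which $L$ vanishes—itself a zero of $L$ in $(0,\infty)$—to an adjacent interval; alternatively one may perturb the coefficients in the expansion of $L$ to remove coincidences, apply the clean argument, and pass to the limit. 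Once this careful accounting for coincident zeros is in place, every remaining step is routine.
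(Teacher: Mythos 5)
Your argument is, in substance, a self-contained proof of the result that the paper merely cites: the paper's entire proof of this corollary is the observation that Lemma~\ref{lem:XLPLinCom} places $L^{(\alpha)}_{\lambda,\mu,n}$ in the span of $L^{(\alpha+r)}_{n},\dots,L^{(\alpha+r)}_{n-t}$, followed by an appeal to Beardon and Driver's Theorem~3.2 for general orthogonal polynomials. Your Gauss--Laguerre step (exactness up to degree $2n-1$ applied to $QL$ with $\deg Q\le n-t-1$, legitimate since $\alpha+r>-1$ makes the nodes simple and the Christoffel numbers positive), the resulting discrete orthogonality, and the sign-change lemma for a nonzero vector annihilating all moments of order $\le n-t-1$ are all correct, and in the generic situation where $L^{(\alpha)}_{\lambda,\mu,n}(a^{(\alpha+r)}_{j,n})\neq 0$ for every $j$ they do produce at least $n-t$ open intervals containing a zero.

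The gap is exactly where you suspect it, and neither of your two repairs closes it. If $L:=L^{(\alpha)}_{\lambda,\mu,n}$ vanishes at a node $a^{(\alpha+r)}_{j,n}$, that point is an endpoint of the two adjacent intervals and lies in neither open interval, so ``attributing it to an adjacent interval'' does not yield what the corollary asserts; and in the perturbation argument the interior zero of the perturbed polynomial may migrate to an endpoint in the limit, so Hurwitz-type reasoning only places a zero in the closed interval. The obstruction is not cosmetic: a weak sign change of $(w_jL(a_j))_j$ that straddles a block of vanishing entries need not force any open subinterval to contain a zero, because $L$ can pass from positive to negative by vanishing only at the intermediate nodes. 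For instance, in the Hermite setting $q=-x-x^3=-\tfrac{5}{4}H_1-\tfrac{1}{8}H_3$ is orthogonal to constants, yet its only real zero is the middle zero of $H_3$, so neither open interval between consecutive zeros of $H_3$ contains a zero of $q$; this shows that the discrete orthogonality alone, which is all your argument uses, cannot rule out the degenerate scenario. What your argument does prove unconditionally is the slightly weaker count that the number of open intervals containing a zero of $L$ plus the number of nodes at which $L$ vanishes is at least $n-t$ (each sign change either deposits a zero in some open subinterval or forces at least one interior node into the vanishing set), and that weaker statement is what is really used afterwards in the proofs of Theorem~\ref{thm:XLPN(n)} and Theorem~\ref{thm:XLPMarchenkoPastur}. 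To obtain the corollary exactly as stated you must either exclude common zeros of $L^{(\alpha)}_{\lambda,\mu,n}$ and $L^{(\alpha+r)}_{n}$ or reproduce the more careful bookkeeping of Beardon and Driver's own proof.
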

\begin{proof}
This follows from Lemma \ref{lem:XLPLinCom} as was shown by 
Beardon and Driver \cite[Theorem 3.2]{Beardon_Driver} for arbitrary orthogonal
polynomials on the real line.
\end{proof} 

The lower bound in Theorem \ref{thm:XLPN(n)} also follows immediately from Corollary \ref{cor:BeardonDriver}.

\subsection{Proof of Mehler-Heine asymptotics}
Mehler-Heine asymptotics describe the behavior of orthogonal polynomials near the edges of their support. For the Laguerre polynomials this reads as follows \cite[Theorem 8.1.3]{Szego}.

\begin{theorem}\label{thm:LagMehlerHeine}
Take $\alpha\in\mathbb{R}$, then one has
\begin{equation}\label{eq:LagMehlerHeine}
	\lim_{n\to\infty} \frac{1}{n^{\alpha}}L_n^{(\alpha)}\left(\frac{x}{4n}\right) = 2^{\alpha} x^{-\frac{\alpha}{2}} J_{\alpha}\left(\sqrt{x}\right),
\end{equation}
uniformly for $x$ in compact subsets of the complex plane.
\end{theorem}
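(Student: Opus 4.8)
The plan is to prove \eqref{eq:LagMehlerHeine} directly from the terminating power-series representation of the Laguerre polynomial, extracting the limit coefficient by coefficient and recognising the resulting entire series as the Bessel function. Throughout, $\alpha\in\mathbb{R}$ is fixed, with the convention $1/\Gamma(m)=0$ at non-positive integers $m$, which absorbs the degenerate cases where $\alpha$ is a negative integer. First I would start from
\[
L_n^{(\alpha)}(y) = \sum_{k=0}^n \frac{(-1)^k\,\Gamma(n+\alpha+1)}{\Gamma(k+\alpha+1)\,(n-k)!\,k!}\,y^k,
\]
substitute $y=x/(4n)$, and divide by $n^\alpha$, obtaining
\[
\frac{1}{n^\alpha}L_n^{(\alpha)}\!\left(\frac{x}{4n}\right) = \sum_{k=0}^n c_{k,n}\,x^k, \qquad c_{k,n} = \frac{(-1)^k}{\Gamma(k+\alpha+1)\,k!\,4^k}\cdot\frac{\Gamma(n+\alpha+1)}{n^\alpha\,(n-k)!\,n^k}.
\]

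Next I would pass to the limit in each coefficient. Writing the last factor as a product,
\[
\frac{\Gamma(n+\alpha+1)}{n^\alpha\,(n-k)!\,n^k} = \frac{\Gamma(n+\alpha+1)}{n^\alpha\,n!}\cdot\prod_{j=0}^{k-1}\Bigl(1-\frac{j}{n}\Bigr),
\]
and using the standard ratio asymptotics $\Gamma(n+\alpha+1)/(n^\alpha\,n!)\to 1$ together with $\prod_{j=0}^{k-1}(1-j/n)\to 1$, I get $c_{k,n}\to c_k:=(-1)^k/(\Gamma(k+\alpha+1)\,k!\,4^k)$ for each fixed $k$. To identify $\sum_{k\ge0}c_k x^k$ with the right-hand side of \eqref{eq:LagMehlerHeine}, I would insert $z=\sqrt{x}$ into the Bessel series $J_\alpha(z)=\sum_{k\ge0}(-1)^k(z/2)^{2k+\alpha}/(k!\,\Gamma(k+\alpha+1))$ and factor out $x^{-\alpha/2}$, which gives precisely $2^\alpha x^{-\alpha/2}J_\alpha(\sqrt{x})=\sum_{k\ge0}c_k x^k$.

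The only genuine analytic point, and the step I expect to be the main obstacle, is upgrading this coefficientwise convergence to convergence of the polynomials that is uniform on each disc $\{|x|\le R\}$. The bound that makes this work is that $\prod_{j=0}^{k-1}(1-j/n)\le1$ and that $\Gamma(n+\alpha+1)/(n^\alpha n!)$ is bounded in $n$, so $|c_{k,n}|\le M\,|c_k|$ with $M$ independent of $k$ and $n$, and with $c_{k,n}=0$ for $k>n$. Since $\sum_k|c_k|R^k<\infty$ for every $R$, I would split
\[
\Bigl|\sum_{k=0}^n c_{k,n}x^k-\sum_{k=0}^\infty c_k x^k\Bigr| \le \sum_{k=0}^K|c_{k,n}-c_k|R^k + \sum_{k=K+1}^\infty\bigl(|c_{k,n}|+|c_k|\bigr)R^k,
\]
choose $K$ so that the second (tail) sum is small uniformly in $n$ using the dominating series, and then let $n\to\infty$ so that the finite head vanishes. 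This yields the locally uniform convergence asserted in the theorem.
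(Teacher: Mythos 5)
Your proof is correct. Bear in mind, however, that the paper contains no proof of this statement to compare against: Theorem \ref{thm:LagMehlerHeine} is quoted directly from Szeg\H{o} \cite[Theorem 8.1.3]{Szego}. What you have written is in substance the classical argument underlying that reference: take the terminating hypergeometric representation of $L_n^{(\alpha)}$, pass to the limit coefficient by coefficient, and use a dominating series (Tannery's theorem) to upgrade termwise convergence to convergence uniform on discs. Your key estimate is sound: for $0\le k\le n$ one has $\prod_{j=0}^{k-1}\bigl(1-\tfrac{j}{n}\bigr)\in[0,1]$, the coefficients vanish for $k>n$, and $\Gamma(n+\alpha+1)/(n^{\alpha}n!)\to 1$, hence is bounded for $n\ge n_0$; since $\sum_k |c_k|R^k<\infty$ for every $R$ (the $c_k$ decay like $4^{-k}/(k!)^2$ up to powers of $k$), the head--tail splitting works exactly as you describe. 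One cosmetic caveat: when $\alpha$ is a negative integer and $n+\alpha+1\le 0$, the quotient $\Gamma(n+\alpha+1)/\Gamma(k+\alpha+1)$ should be read as the finite product $\prod_{j=k+1}^{n}(j+\alpha)$, which your $1/\Gamma(m)=0$ convention does not by itself cover; but since the theorem concerns $n\to\infty$, restricting to $n\ge n_0$ with $n_0+\alpha+1>0$ (which you implicitly need anyway for the boundedness of the Gamma quotient) discards only finitely many $n$ and costs nothing. Your identification of $\sum_k c_k x^k$ with $2^{\alpha}x^{-\alpha/2}J_{\alpha}(\sqrt{x})$ is also correct, including the negative-integer-order case, where the vanishing of $c_0,\dots,c_{-\alpha-1}$ under the $1/\Gamma$ convention matches the standard Bessel series for $J_{-m}$. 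In short: where the paper buys brevity by citation, your route buys a self-contained elementary proof, valid for every real $\alpha$ and uniform on all compact subsets of $\mathbb{C}$, which is precisely the generality the paper invokes later (e.g.\ when differentiating the limit in the proof of Theorem \ref{thm:XLPMehlerHeine}).
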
	

According to Theorem \ref{thm:XLPMehlerHeine} a similar asymptotic behavior holds 
for the exceptional Laguerre polynomials as we are now going to prove.

\begin{proof}[Proof of Theorem \ref{thm:XLPMehlerHeine}]
Set $s = n-|\lambda|-|\mu|+r_1$ and write $L^{(\alpha)}_{\lambda,\mu,n}(x)$ 
as in \eqref{eq:XLPdecomposition1}. Clearly, if we do the expansion of
\eqref{def:XLP1} along the last column, we find $Q_r = \Omega_{\lambda,\mu}^{(\alpha)}$,
and therefore 
\begin{equation}\label{eq:LagMehlerHeine1}
	L^{(\alpha)}_{\lambda,\mu,n}(x) = 
	\sum_{j=0}^{r-1}Q_{j}(x)\frac{d^j}{dx^j}L^{(\alpha)}_{s}(x) + 
	\Omega^{(\alpha)}_{\lambda,\mu}(x) \frac{d^r}{dx^r}L^{(\alpha)}_{s}(x),
\end{equation}
where $Q_j$ is a polynomial of degree at most $|\lambda|+|\mu|-r_1+\min\{j,r_1\}$ 
which does not depend on $n$.

The limit \eqref{eq:LagMehlerHeine} also holds if we replace
$L_n^{(\alpha)}$ by $L_s^{(\alpha)}$, where $s = n-c$ for some constant $c$.
Thus
\begin{equation}\label{eq:LagMehlerHeine2}
	\lim_{n\to\infty} \frac{1}{n^{\alpha}}L_s^{(\alpha)}\left(\frac{x}{4n}\right) 
	= 2^{\alpha} x^{-\frac{\alpha}{2}} J_{\alpha}\left(\sqrt{x}\right),
\end{equation}
uniformly for $x$ in compact subsets of the complex plane.	
Because of the uniform convergence, \eqref{eq:LagMehlerHeine2} can be differentiated 
with respect to $x$ any number of times. Hence, for every non-negative integer $j$,
\begin{equation} \label{eq:LagMehlerHeine3}
	\lim_{n\to\infty} \frac{1}{4^j n^{\alpha +j}}
	\left( \frac{d^{j}}{dx^{j}} L_s^{(\alpha)} \right) \left(\frac{x}{4n}\right)
	 = 
	 \frac{d^{j}}{dx^j}\left(2^{\alpha} x^{-\frac{\alpha}{2}} 
	 J_{\alpha}\left(\sqrt{x}\right)\right).
\end{equation}
In particular
\[ \left( \frac{d^{j}}{dx^{j}} L_s^{(\alpha)} \right) \left(\frac{x}{4n}\right)
	= O \left(n^{\alpha + j}\right) \]
as $n \to \infty$ uniformly for $x$ in compacts. In fact, the previous equality can also be obtained directly because the derivative of the Laguerre polynomials is again a Laguerre polynomial \eqref{eq:Lagder} and therefore we can apply \eqref{eq:LagMehlerHeine} with the correct parameter.

Hence, the limiting behavior of  
\[ \frac{1}{n^{\alpha+r}} L_{\lambda, \mu,n}^{(\alpha)} \left( \frac{x}{4n} \right)
\]
is determined by the last term in \eqref{eq:LagMehlerHeine1} only. We find from \eqref{eq:LagMehlerHeine1} and 
\eqref{eq:LagMehlerHeine3} with $j=r$,
\begin{equation*}
\lim_{n \to \infty} 
	\frac{1}{n^{\alpha +r}} L_{\lambda, \mu,n}^{(\alpha)} \left( \frac{x}{4n} \right)
	= \Omega_{\lambda, \mu}^{(\alpha)}(0) 4^r
	 \frac{d^{r}}{dx^r}\left(2^{\alpha} x^{-\frac{\alpha}{2}} 
	 J_{\alpha}\left(\sqrt{x}\right)\right).
\end{equation*}
Because of the identity for Bessel functions
\[ \frac{d}{dx} \left( x^{- \frac{\alpha}{2}} J_{\alpha}(\sqrt{x}) \right)
	= - \frac{1}{2} x^{-\frac{\alpha+1}{2}} J_{\alpha+1}(\sqrt{x}) \]
we obtain the desired result \eqref{eq:XLPMehlerHeine}.
\end{proof}

Next we apply Hurwitz's Theorem to obtain the asymptotic convergence for the 
regular zeros as stated in Corollary \ref{cor:XLPRegularZeros}. 
We need the assumption that 
$\Omega^{(\alpha)}_{\lambda,\mu}(0) \neq 0$ so that the right-hand side of 
\eqref{eq:XLPMehlerHeine} is not identically zero.

\begin{proof}[Proof of Corollary \ref{cor:XLPRegularZeros}]
Applying Hurwitz's theorem to \eqref{eq:XLPMehlerHeine} gives us that 
those zeros of $L^{(\alpha)}_{\lambda,\mu,n}(\frac{x}{4n})$ that do not 
tend to infinity, tend to the zeros of 
$x^{-(\alpha + r)/2} J_{\alpha+r}(\sqrt{x})$ as $n$ tends to 
infinity. All these limiting zeros are simple and lie on the positive
real line.  

Since the zeros are simple, Hurwitz's theorem also says that each 
zero of $J_{\alpha+r}(\sqrt{x})$  attracts exactly one  zero of 
$L^{(\alpha)}_{\lambda,\mu,n}(\frac{x}{4n})$ as $n \to \infty$. 
This zero has to be real for large enough $n$, since its complex conjugate is a 
zero as well and if it were not real
for large $n$ then two zeros of $L^{(\alpha)}_{\lambda,\mu,n}(\frac{x}{4n})$
would approach the same simple zero of  $J_{\alpha+r}(\sqrt{x})$.  
\end{proof}

\subsection{Proof of the weak macroscopic limit of the regular zeros}
In this section we prove that the limit behavior of the zero-counting measure 
of the regular zeros of the exceptional Laguerre polynomials is given by the 
Marchenko-Pastur distribution. It is a generalization of the following well-known 
limit  of the zero-counting measure for Laguerre polynomials, see e.g.\ 
\cite[Theorem 1]{Gawronski}. 

\begin{theorem}\label{thm:LagMarchenkoPastur}
Let $ 0 < a^{(\alpha)}_{1,n} < a^{(\alpha)}_{2,n} < \dots < a^{(\alpha)}_{n,n} < \infty$
denote the zeros of the Laguerre polynomial $L^{(\alpha)}_{n}$ where $\alpha>-1$. 
Then for any bounded continuous function $f :[0,\infty) \to \mathbb R$ it 
is true that
\begin{equation}\label{eq:LagMarchenkoPastur}
	\lim_{n\to\infty}\frac{1}{n}\sum_{j=1}^{n}f\left(\frac{a^{(\alpha)}_{j,n}}{n}\right)
	=\frac{1}{2\pi}\int_{0}^{4}f(x)\sqrt{\frac{4-x}{x}}dx.
\end{equation}
\end{theorem}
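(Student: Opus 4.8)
The plan is to prove Theorem~\ref{thm:LagMarchenkoPastur} by the method of moments applied to the Jacobi (tridiagonal) matrix whose eigenvalues are the zeros $a_{j,n}^{(\alpha)}$. First I would record the three-term recurrence for the monic Laguerre polynomials. Since the leading coefficient of $L_n^{(\alpha)}$ is $(-1)^n/n!$, the monic version $\hat L_n^{(\alpha)} = (-1)^n n!\, L_n^{(\alpha)}$ satisfies
\[
\hat L_{n+1}^{(\alpha)}(x) = \bigl(x - (2n+\alpha+1)\bigr)\hat L_n^{(\alpha)}(x) - n(n+\alpha)\,\hat L_{n-1}^{(\alpha)}(x).
\]
Hence the zeros of $L_n^{(\alpha)}$ are exactly the eigenvalues of the symmetric tridiagonal matrix $J_n$ of size $n$ with diagonal entries $b_i = 2i+\alpha+1$ and off-diagonal entries $\sqrt{i(i+\alpha)}$ for $i=0,\dots,n-1$. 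Writing $\nu_n = \frac1n\sum_{j=1}^n \delta_{a_{j,n}^{(\alpha)}/n}$ for the scaled empirical measure, the statement \eqref{eq:LagMarchenkoPastur} is precisely the weak convergence $\nu_n \to \mathrm{MP}$, where $\mathrm{MP}$ denotes the Marchenko--Pastur law with density $\tfrac{1}{2\pi}\sqrt{(4-x)/x}$ on $[0,4]$.

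Second, I would reduce weak convergence to convergence of moments. Because $\mathrm{MP}$ is compactly supported it is determined by its moment sequence, and since the largest zero of $L_n^{(\alpha)}$ is bounded by $Cn$ for a fixed constant $C$ (a classical estimate, obtainable for instance by bounding the operator norm $\|J_n\|$), the measures $\nu_n$ are all supported in one common compact interval. It therefore suffices to show that for every integer $k\ge 0$
\[
\int x^k\, d\nu_n(x) = \frac{1}{n^{k+1}}\sum_{j=1}^n \bigl(a_{j,n}^{(\alpha)}\bigr)^k = \frac{1}{n^{k+1}}\operatorname{tr}\bigl(J_n^k\bigr)
\]
converges to the $k$-th moment of $\mathrm{MP}$, which is the Catalan number $C_k = \tfrac{1}{k+1}\binom{2k}{k}$.

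Third, I would evaluate $\lim_n n^{-(k+1)}\operatorname{tr}(J_n^k)$ combinatorially. Expanding the trace as a sum over closed walks of length $k$ on the path $\{0,\dots,n-1\}$, each step either stays at a vertex (weight $b_i \approx 2i$) or moves to a neighbour (weight $\approx i$). Since a walk changes its level by at most $k$ while being based at a level $i$ that is typically of order $n$, in the scaled limit the level is essentially constant along the walk, so a walk based at level $i$ contributes $\approx i^k$ times a purely combinatorial factor $2^{\#\mathrm{stays}}$. Summing over the shape of the walk gives the weighted number of closed lattice walks of length $k$ with steps in $\{-1,0,+1\}$ in which each $0$-step carries weight $2$; this number is the constant term of $(z+2+z^{-1})^k = (z^{1/2}+z^{-1/2})^{2k}$, namely $\binom{2k}{k}$. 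Consequently $n^{-(k+1)}\operatorname{tr}(J_n^k) \approx \binom{2k}{k}\,n^{-1}\sum_i (i/n)^k \to \binom{2k}{k}\int_0^1 s^k\, ds = \tfrac{1}{k+1}\binom{2k}{k} = C_k$, the parameter $\alpha$ entering only through lower-order terms that disappear after scaling.

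The main obstacle is making this asymptotic walk-counting rigorous: one must show that walks straying an unbounded distance from their base level, the boundary effects near vertex $0$, and the $\alpha$-dependent corrections to the weights together contribute only $o(n^{k+1})$ to $\operatorname{tr}(J_n^k)$, so that the leading term is governed by the constant-level combinatorics above. Once this is established, moment convergence combined with the moment-determinacy and tightness from the second step yields $\nu_n \to \mathrm{MP}$ weakly, which is exactly \eqref{eq:LagMarchenkoPastur} for every bounded continuous $f$.
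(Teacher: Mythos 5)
Your proposal is correct, but note that the paper does not actually prove Theorem \ref{thm:LagMarchenkoPastur}: it quotes it as a classical fact with a citation to \cite[Theorem 1]{Gawronski}, and uses it only as the known input for the new result, Theorem \ref{thm:XLPMarchenkoPastur}, whose proof transfers the limit \eqref{eq:LagMarchenkoPastur} to exceptional Laguerre polynomials via the interlacing statement of Corollary \ref{cor:BeardonDriver}. Your moment method is therefore a genuinely different (self-contained) route. The ingredients check out: the monic recurrence you state for $\hat L_n^{(\alpha)} = (-1)^n n!\, L_n^{(\alpha)}$ is correct, so the zeros of $L_n^{(\alpha)}$ are the eigenvalues of the Jacobi matrix $J_n$ with diagonal $b_i = 2i+\alpha+1$ and off-diagonal entries $\sqrt{i(i+\alpha)}$ (for $i=1,\dots,n-1$ — a harmless index slip in your statement), real symmetric precisely because $\alpha>-1$; the bound $\|J_n\|\leq Cn$ gives a common compact support, so moment convergence suffices; and the constant term of $(z+2+z^{-1})^k=(z^{1/2}+z^{-1/2})^{2k}$ is indeed $\binom{2k}{k}$, which combined with $\frac{1}{n}\sum_{i}(i/n)^k \to \frac{1}{k+1}$ yields the Catalan moments $C_k$ of the Marchenko--Pastur law. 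The obstacle you flag at the end is milder than you suggest: a closed walk of length $k$ can never stray more than $k$ levels from its base, so for bases $i$ with $k \leq i \leq n-1-k$ every weight along the walk equals the base value up to a relative error $O(k^2/i)$ (with the $\alpha$-corrections absorbed into the same error), while the $O(k)$ boundary bases and these corrections together contribute only $O(n^k)=o(n^{k+1})$ to $\operatorname{tr}(J_n^k)$; writing this out is routine for fixed $k$. What your route buys is an elementary argument that makes transparent why $\alpha$ disappears from the limit (it perturbs the recurrence coefficients only at order $O(1)$ against the scale $n$); what the paper's citation buys is brevity, the statement also being a special case of general theorems on zero distributions of orthogonal polynomials with varying recurrence coefficients $a_i \sim i$, $b_i \sim 2i$.
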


Knowing  Theorem \ref{thm:LagMarchenkoPastur}, we prove  
Theorem \ref{thm:XLPMarchenkoPastur} in essentially the 
same way as Theorem 2.2 in \cite{Kuijlaars_Milson}, which dealt with the
semicircle law for the scaled zeros of exceptional Hermite polynomials.

\begin{proof}[Proof of Theorem \ref{thm:XLPMarchenkoPastur}]
Suppose $\alpha+r>-1$. Take $n\in\mathbb{N}_{\lambda,\mu}$ such that 
$n>2(|\lambda|+|\mu|)+r_2$. Let 
$0<a^{(\alpha+r)}_{1,n}<a^{(\alpha+r)}_{2,n}<\cdots<a^{(\alpha+r)}_{n,n}$ 
denote the zeros of the  Laguerre polynomial $L^{(\alpha+r)}_{n}$. 
From Lemma \ref{lem:XLPLinCom} and Corollary \ref{cor:BeardonDriver} 
it follows that at least 
$n-2(|\lambda|+|\mu|)-r_2$ intervals $(a^{(\alpha+r)}_{j,n},a^{(\alpha+r)}_{j+1,n})$, 
where $1\leq j<n$, contain a zero of the exceptional Laguerre polynomial 
$L^{(\alpha)}_{\lambda,\mu,n}$.

For any choice of $\xi_{j,n}\in(a^{(\alpha+r)}_{j,n},a^{(\alpha+r)}_{j+1,n})$ 
for every $j=1,\dots,n-1$ we get that the limit \eqref{eq:LagMarchenkoPastur} 
is still satisfied, i.e.,
\begin{equation*}
	\lim_{n\to\infty}\frac{1}{n-1}\sum_{j=1}^{n-1}f\left(\frac{\xi_{j,n}}{n}\right)
	=\frac{1}{2\pi}\int_{0}^{4}f(x)\sqrt{\frac{4-x}{x}}dx.
\end{equation*}	

For $n$ large, we can take $\xi_{j,n}$ to be equal to a zero of of 
$L^{(\alpha)}_{\lambda,\mu,n}$ for at least $n-2(|\lambda|+|\mu|)-r_2$ 
values of $j$.
By dropping the other indices in the sum, the limit will not be affected as $f$ 
is bounded. Next, one can extend the sum by including the remaining positive real 
zeros of $L^{(\alpha)}_{\lambda,\mu,n}$ because their number
remains bounded as $n$ increases. Hence, we obtain \eqref{eq:XLPMarchenkoPastur}.
\end{proof}

\subsection{Proof of the convergence of the exceptional zeros}\label{sec:ProofExceptionalZeros}
In this section we prove Theorem \ref{thm:XLPExceptionalZeros} which deals with 
the convergence of the exceptional zeros. Recall the weight
\begin{equation} \label{eq:Wlamu} 
	W^{(\alpha)}_{\lambda, \mu}(x) =  \frac{x^{\alpha+r} e^{-x}}
	{\left(\Omega_{\lambda,\mu}^{(\alpha)}(x) \right)^2} 
	\end{equation}
that appeared in Lemma \ref{lem:ELP} for $x \in (0,\infty)$. 
In this section we view \eqref{eq:Wlamu} as a meromorphic function 
in $\mathbb C \setminus \{0\}$
with poles at the zeros of $\Omega_{\lambda,\mu}^{(\alpha)}$.
We also consider a general parameter $\alpha \in \mathbb R$ and partitions $\lambda$ and $\mu$. We will need the following property.
\begin{lemma} \label{lem:Residue}
Take $\alpha\in\mathbb{R}$. For every $n \in \mathbb N_{\lambda, \mu}$ we have that
\[ \left(L^{(\alpha)}_{\lambda,\mu,n} \right)^2 W^{(\alpha)}_{\lambda,\mu} \]
has zero residue at each of its poles in $\mathbb C \setminus \{0\}$.
\end{lemma}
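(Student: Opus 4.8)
The plan is to reduce the statement to the Sturm--Liouville structure behind the exceptional Laguerre polynomials, and then to upgrade the easy ``distinct eigenvalue'' case to the genuinely needed ``equal eigenvalue'' case by analytic continuation in the eigenvalue. Write $\Omega=\Omega^{(\alpha)}_{\lambda,\mu}$, $W=W^{(\alpha)}_{\lambda,\mu}$ and $y_n=L^{(\alpha)}_{\lambda,\mu,n}$. Away from the origin the only singularities of $\left(y_n\right)^2 W$ are the poles produced by the zeros of $\Omega$. From the Darboux--Crum construction the seeds $f_1,\dots,f_r$ in \eqref{def:XLP1} are eigenfunctions of the Laguerre operator \eqref{eq:LagDV1} (the first two rows of Table~\ref{tab:1}), and $u_n:=y_n/\Omega=\Wr[f_1,\dots,f_r,L^{(\alpha)}_s]/\Wr[f_1,\dots,f_r]$ is the associated transformed eigenfunction, the factors $e^{-r_2x}$ cancelling. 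The transformed operator $\hat\ell$ is a second–order operator that is formally self-adjoint with respect to the classical weight $\hat w(x)=x^{\alpha+r}e^{-x}$, and one has the bookkeeping identity $\hat w\,u_n u_m=W\,y_n y_m$. In particular $\hat w$ is analytic and non-vanishing at every $z_0\in\mathbb{C}\setminus\{0\}$, so the poles of $W y_n y_m$ there coincide with those of $u_n u_m$, i.e.\ with the zeros of $\Omega$.

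First I would record the off-diagonal case. Fix a zero $z_0\neq0$ of $\Omega$. Formal self-adjointness yields the Lagrange identity
\[
(\nu-\nu')\,\hat w\,u\,\tilde u=\frac{d}{dx}\Big[x\hat w\big(\tilde u\,u'-u\,\tilde u'\big)\Big]
\]
for any eigenfunctions $u,\tilde u$ of $\hat\ell$ with eigenvalues $\nu\neq\nu'$, the potential term cancelling because it enters symmetrically. Since $x\hat w$ is analytic at $z_0$ and $u,\tilde u$ are meromorphic there, the right-hand side is the derivative of a function meromorphic near $z_0$, hence has vanishing residue; thus $\mathrm{Res}_{z_0}[\hat w\,u\,\tilde u]=0$ as soon as the eigenvalues differ, and with $u=u_n,\ \tilde u=u_m,\ n\neq m$ this gives $\mathrm{Res}_{z_0}[Wy_ny_m]=0$. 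The substance of the lemma is the diagonal case $m=n$, where this argument collapses because the concomitant of a function with itself vanishes identically; this is the main obstacle.

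To reach the diagonal I would deform in the eigenvalue. As $z_0$ is an ordinary point of \eqref{eq:LagDV1}, for each $\nu\in\mathbb{C}$ there is a solution $\chi_\nu$ of $x\chi''+(\alpha+1-x)\chi'=\nu\chi$ jointly analytic in $(x,\nu)$ near $z_0$; I fix its initial data at $z_0$ so that at $\nu_0=-(n-|\lambda|-|\mu|+r_1)$ one has $\chi_{\nu_0}=L^{(\alpha)}_s$. Applying the intertwiner produces a family $u(\cdot,\nu):=\Wr[f_1,\dots,f_r,\chi_\nu]/\Wr[f_1,\dots,f_r]$ of eigenfunctions of $\hat\ell$ with eigenvalue $\nu$, meromorphic in $x$ near $z_0$ with poles only at $z_0$, analytic in $\nu$, and with $u(\cdot,\nu_0)=u_n$. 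Set
\[
S(\nu,\nu')=\frac{1}{2\pi i}\oint_{|x-z_0|=\varepsilon}\hat w(x)\,u(x,\nu)\,u(x,\nu')\,dx,
\]
with $\varepsilon$ small enough that $z_0$ is the only zero of $\Omega$ in the disc, so that $S(\nu,\nu')=\mathrm{Res}_{z_0}[\hat w\,u(\cdot,\nu)u(\cdot,\nu')]$. The off-diagonal step gives $S(\nu,\nu')=0$ for $\nu\neq\nu'$, while differentiation under the integral sign shows $S$ is analytic in $(\nu,\nu')$. An analytic function vanishing on the dense open complement of the diagonal vanishes identically, so $S(\nu_0,\nu_0)=0$, i.e.\ $\mathrm{Res}_{z_0}[Wy_n^2]=0$. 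Since $z_0$ was an arbitrary pole in $\mathbb{C}\setminus\{0\}$, the lemma follows.

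The steps I expect to need the most care are the verification of the self-adjoint form of $\hat\ell$ with weight $\hat w$ (equivalently, that $y_n$ satisfies a Sturm--Liouville equation with weight $W$, valid for every $\alpha\in\mathbb{R}$ as an algebraic identity, not only in the orthogonality regime of Lemma~\ref{lem:ELP}) and the joint analyticity of the family $u(\cdot,\nu)$; both become routine once the Darboux picture is in place. A purely computational alternative — expanding the Laurent series of $Wy_n^2$ at $z_0$ and matching coefficients via the differential equation — looks far messier for a zero of $\Omega$ of order $k>1$, so I would favour the analytic-continuation argument above.
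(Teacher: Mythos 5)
Your argument is correct, but it is genuinely different from the one in the paper. The paper passes to the Schr\"odinger gauge via $x \mapsto x^2$, writes down the Darboux--Crum transformed potential \eqref{eq:Vlamu2}, invokes trivial monodromy (Proposition 5.21 of \cite{GarciaFerrero_GomezUllate_Milson}) to know that eigenfunctions are meromorphic at each pole $p \neq 0$, and then uses Proposition 3.3 of \cite{Duistermaat_Grunbaum} to get the precise local form of the Laurent expansion at $p$ (leading order $-\nu_p$ with $2d_p=\nu_p(\nu_p+1)$ and vanishing odd coefficients $f_{2j-1}$ for $j \le \nu_p$), from which the vanishing of the residue of $f^2$ is immediate; finally it undoes the substitution $z \mapsto z^2$. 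You instead stay in the Laguerre gauge, kill the off-diagonal residues with the bilinear concomitant, and reach the diagonal by analytic continuation in the spectral parameter; meromorphy at $z_0$ comes for free from the Wronskian-quotient representation, so you need neither the trivial-monodromy citation nor the Duistermaat--Gr\"unbaum local analysis. The trade-off is that your route rests on the fact that $u=y/\Omega$ satisfies a second-order equation of the exact Sturm--Liouville form $xu''+(\alpha+r+1-x)u'+qu$ self-adjoint for the weight $x^{\alpha+r}e^{-x}$, together with the Crum intertwining for arbitrary (non-polynomial) local solutions $\chi_\nu$ near the ordinary point $z_0$ --- both standard consequences of the Darboux picture underlying Lemma \ref{lem:ELP} and the paper's own identity \eqref{eq:OplamuEF2}, but not proved in the paper, so you should either verify them or cite them explicitly. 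What each approach buys: the paper's gives sharper local information about the Laurent expansion of the eigenfunctions (at the price of two external results), while yours is softer, essentially self-contained given the exceptional operator, and extends verbatim to any iterated Darboux transformation of a Sturm--Liouville problem at an ordinary point.
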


\begin{proof}
We apply a Darboux-Crum transformation to the differential operator 
\eqref{eq:LagDV1} with eigenfunctions $\varphi_{n_1}^{(\alpha)}, \ldots,
\varphi_{n_{r_1}}^{(\alpha)}, \psi_{m_1}^{(\alpha)}, \ldots, \psi_{m_{r_2}}^{(\alpha)}$,
see Table \ref{tab:2} in Section \ref{sec:Laguerre}. It leads to a new differential operator
\begin{equation} \label{eq:Oplamu} 
	y  \mapsto - y'' + V_{\lambda,\mu} y 
	\end{equation}
with potential
\begin{equation} \label{eq:Vlamu}
	V_{\lambda,\mu}(x)
	= x^2 + \frac{4\alpha^2-1}{4x^2}  - 
	2 \frac{d^2}{dx^2} \log \left(\Wr[\varphi^{(\alpha)}_{n_1},\dots, 
	\varphi^{(\alpha)}_{n_{r_1}}, \psi^{(\alpha)}_{m_1},\dots, \psi^{(\alpha)}_{m_{r_2}}]\right).
\end{equation}
The differential operator \eqref{eq:Oplamu} has eigenfunctions of the form
\begin{equation} \label{eq:OplamuEF}
	\frac{\Wr[\varphi^{(\alpha)}_{n_1},\dots, \varphi^{(\alpha)}_{n_{r_1}}, 
	\psi^{(\alpha)}_{m_1},\dots, \psi^{(\alpha)}_{m_{r_2}},\varphi^{(\alpha)}_s]}
	{\Wr[\varphi^{(\alpha)}_{n_1},\dots, \varphi^{(\alpha)}_{n_{r_1}}, 
	\psi^{(\alpha)}_{m_1},\dots, \psi^{(\alpha)}_{m_{r_2}}]}, 
\end{equation} 
where $s\geq 0$ and $s\neq n_j$ for every $j=1, \ldots, r_1$. 

Using \eqref{eq:Wr1} and \eqref{eq:Wr2}, we can express
the Wronskian in \eqref{eq:Vlamu} as a Wronskian for the functions $f_1, \ldots, f_r$
from \eqref{eq:fj1}-\eqref{eq:fj2}, and it follows from \eqref{eq:OmegaLaMu} that
\[ 
\Wr[\varphi^{(\alpha)}_{n_1},\dots, 
	\varphi^{(\alpha)}_{n_{r_1}}, \psi^{(\alpha)}_{m_1},\dots, \psi^{(\alpha)}_{m_{r_2}}]
	= 2^{\frac{r(r-1)}{2}} x^{\alpha r + \frac{r^2}{2}} e^{-\frac{r}{2}x^2}
		\Omega^{(\alpha)}_{\lambda,\mu}(x^2). 
\]
Hence
\begin{equation} \label{eq:Vlamu2} 
	V_{\lambda,\mu}(x) 
	= x^2 + 2r + \frac{4(\alpha+r)^2-1}{4x^2}  - 
	2 \frac{d^2}{dx^2}\log \left( \Omega^{(\alpha)}_{\lambda,\mu}(x^2) \right).
\end{equation}
Similarly, the eigenfunction \eqref{eq:OplamuEF} can be written as
\begin{equation} \label{eq:OplamuEF2}
	2^r x^{\alpha+\frac{1}{2}+r} e^{-\frac{1}{2}x^2} 
	\frac{L^{(\alpha)}_{\lambda,\mu,n}(x^2)}{\Omega^{(\alpha)}_{\lambda,\mu}(x^2)}
\end{equation}
if we choose $n \in \mathbb N_{\lambda,\mu}$ and $s=n-|\lambda|-|\mu|+r_1$.

We now use the fact that the operator \eqref{eq:Oplamu} has trivial
monodromy at every point $p \in \mathbb C \setminus \{0\}$, see \cite[Proposition 5.21]{GarciaFerrero_GomezUllate_Milson}. This means that any
eigenfunction of \eqref{eq:Oplamu} is meromorphic around $p$.

From \eqref{eq:Vlamu2} we see that $p \neq 0$ is a pole of $V_{\lambda,\mu}$ 
if and only if $p^2$ is a zero of $\Omega^{(\alpha)}_{\lambda,\mu}$. 
If $p^2$ is a zero of order $d_p \geq 0$ then 
\[ V_{\lambda,\mu}(x) =  2 d_p (x-p)^{-2} + O(1) \qquad \text{ as } x \to p. \]
By Proposition 3.3 of \cite{Duistermaat_Grunbaum}
we then have that $2 d_p = \nu_p(\nu_p+1)$ for some positive integer $\nu_p$.
Moreover, every eigenfunction $f$ has a Laurent expansion around $p$ 
\[ f(x) = (x-p)^{-\nu_p} \sum_{j=0}^{\infty} f_j (x-p)^j \]
with $f_{2j-1} = 0$ for $j=1, \ldots, \nu_p$.
This property implies that $f^2$ has zero residue at $x=p$.
 
This in particular holds for the eigenfunction \eqref{eq:OplamuEF2}
and so 
\begin{equation}\label{eq:IntegralArg}
	x^{2\alpha + 1 + 2r} e^{-x^2} 
	\left( \frac{L^{(\alpha)}_{\lambda,\mu,n}(x^2)}{\Omega^{(\alpha)}_{\lambda,\mu}(x^2)} \right)^2 
	=  
	x \left( L^{(\alpha)}_{\lambda,\mu,n}(x^2) \right)^2  
	W_{\lambda,\mu}^{(\alpha)}(x^2)	
\end{equation}
has zero residue at each of its poles in $\mathbb C \setminus \{0\}$. Now let $p^2\in\mathbb{C}\setminus\{0\}$ be a pole of $\left(L^{(\alpha)}_{\lambda,\mu,n} \right)^2 W^{(\alpha)}_{\lambda,\mu}$. The residue is given by
\begin{equation} \label{eq:residueintegral}
	\frac{1}{2\pi i} \oint_{\gamma_{p^2}} 
	\left(L^{(\alpha)}_{\lambda,\mu,n}(z) \right)^2 W^{(\alpha)}_{\lambda,\mu}(z) \, dz,
\end{equation}
where $\gamma_{p^2}$ is a small circle going around $p^2$ in counterclockwise direction. 
By a change of variables $z \mapsto z^2$, we obtain that the 
residue is equal to
\begin{equation*}
	\frac{1}{\pi i} \oint_{\gamma_p} z 
	\left(L^{(\alpha)}_{\lambda,\mu,n}(z^2) \right)^2 W^{(\alpha)}_{\lambda,\mu}(z^2)dz
\end{equation*} 
with $\gamma_p$ around $p$, and $p$ is a pole of \eqref{eq:IntegralArg}. 
This integral is zero, since \eqref{eq:IntegralArg} has zero residues,
and we see that \eqref{eq:residueintegral} is zero.
The lemma follows.  
\end{proof}

Note that if we replace $\varphi^{(\alpha)}_s$ by $\psi^{(\alpha)}_s$ in \eqref{eq:OplamuEF} we obtain other eigenfunctions of the differential operator \eqref{eq:Oplamu}. These eigenfunctions can be written in terms of
$\tilde{L}^{(\alpha)}_{\lambda,\mu,n}(x^2)$ and $\Omega^{(\alpha)}_{\lambda,\mu}(x^2)$. Now we are able to prove the asymptotic behavior of the exceptional zeros.

\begin{proof}[Proof of Theorem \ref{thm:XLPExceptionalZeros}]

Let $z_j$ be simple zero of $\Omega^{(\alpha)}_{\lambda,\mu}$ where 
$z_j\in\mathbb{C}\setminus[0,\infty)$.
Then $z_j$ is a double pole of $W_{\lambda,\mu}^{(\alpha)}$ and by
 \eqref{lem:Residue}
\begin{equation} \label{eq:Taylorexpansion} 
	(x-z_j)^2 \left( L_{\lambda,\mu,n}^{(\alpha)}(x) \right)^2 
	W_{\lambda,\mu}^{(\alpha)}(x)
	= C_0 + C_1(x-z_j) + O\left( (x-z_j)^2 \right) \qquad \text{as } x \to z_j 
	\end{equation}
for a certain constant $C_0$ and $C_1 = 0$. 
We may assume $C_0 \neq 0$, otherwise $z_j$ is a zero of 
$L^{(\alpha)}_{\lambda,\mu,n}$ as well and then \eqref{eq:XLPExceptionalZeros} is clearly satisfied.

Since $C_0 \neq 0$, we can take an analytic logarithm of \eqref{eq:Taylorexpansion} 
in the neighborhood of $z_j$, and since $C_1=0$, its derivative
vanishes at $z_j$.
The logarithmic derivative of the left-hand side of \eqref{eq:Taylorexpansion} is
\begin{equation} \label{eq:logder} 
	\frac{2}{x-z_j} + 2 \sum_{k=1}^{N(n)} \frac{1}{x-x_{k,n}^{(\alpha)}}
	+ 2 \sum_{k=1}^{n-N(n)} \frac{1}{x-z_{k,n}^{(\alpha)}}
		+ \frac{\alpha+r}{x} - 1 
		- 2 \sum_{k=1}^{|\lambda|+|\mu|} \frac{1}{x-z_k} 
\end{equation}
where, as before,
$0 < x_{1,n}^{(\alpha)} \leq \cdots \leq x^{(\alpha)}_{N(n),n}$ 
are the regular zeros and $z^{(\alpha)}_{1,n},\dots,z^{(\alpha)}_{n-N(n),n}$
are the exceptional zeros of the exceptional Laguerre polynomial and
$z_1, \ldots, z_{|\lambda|+|\mu|}$ are the zeros of $\Omega_{\lambda,\mu}^{(\alpha)}$.
Thus \eqref{eq:logder} vanishes as $x \to z_j$, and it gives us the identity
\begin{equation}\label{eq:logder2}
	\sum_{k=1}^{N(n)}\frac{1}{z_j-x^{(\alpha)}_{k,n}}
	+ \sum_{k=1}^{n-N(n)}\frac{1}{z_j-z^{(\alpha)}_{k,n}}
	= \frac{1}{2}-\frac{\alpha+r}{2z_j} + 	
	 \sum_{\shortstack{$\scriptstyle k=1 $ \\ 
	$ \scriptstyle k\neq j$}}^{|\lambda| + |\mu|} 
	\frac{1}{z_j-z_k}.
\end{equation}
	
On the interval $[0,1]$, the number of zeros of $L^{(\alpha+r)}_n$ grows 
roughly like $c\sqrt{n}$ as $n$ tends to infinity for some positive constant $c$, 
see \cite[Theorem 6.31.3]{Szego}. Because of Corollary \ref{cor:BeardonDriver}
the same holds true 
for the number of zeros of $L^{(\alpha)}_{\lambda,\mu,n}$ in $[0,1]$. 
We now distinguish between $z_j$ being real and $z_j$ being non-real.
	
\noindent	
{\bf Case 1:} $\im(z_j)\neq 0$. \\
As the non-real roots of polynomials with real coefficients 
come in conjugate pairs, we may assume that $\im(z_j)>0$. 
The  imaginary part of the first sum in the left-hand side of \eqref{eq:logder2} simplifies to 
\begin{equation*}
	\im\left(\sum_{k=1}^{N(n)}\frac{1}{z_j-x^{(\alpha)}_{k,n}}\right)
	= -\sum_{k=1}^{N(n)}\frac{\im(z_j)}{|z_j-x^{(\alpha)}_{k,n}|^2}.
\end{equation*}
All terms in this sum have the same sign, namely the sign of $\im(z_j)$ 
which is positive. Therefore, by restricting the sum to the zeros 
which are in the  interval $[0,1]$ and using the fact that there
are at least $c\sqrt{n}$ such zeros, we have that for $n$ large enough
\begin{align*}
	\im\left(\sum_{k=1}^{N(n)}\frac{1}{z_j-x^{(\alpha)}_{k,n}}\right)
	&< -\sum_{k=1}^{\lfloor{c\sqrt{n}}\rfloor} \frac{\im(z_j)}{|z_j-x^{(\alpha)}_{k,n}|^2} 
	\\
	&<-c_1 \sqrt{n}
\end{align*}
for some constant $c_1 > 0$. To obtain the second inequality we used that
for $x^{(\alpha)}_{k,n} \in [0,1]$,
\begin{equation}\label{eq:bounddistance}
	|z_j-x^{(\alpha)}_{k,n}|^2 \leq \max\{|z_j-1|^2, |z_j|^2\},
\end{equation}
where the right-hand side is independent of $n$. 

The right-hand side of \eqref{eq:logder2} does not depend on $n$. Therefore, to balance the terms, one has that for $n$ large enough
\begin{equation*}
	\im\left(\sum_{k=1}^{n-N(n)}\frac{1}{z_j-z^{(\alpha)}_{k,n}}\right) > c_1 \sqrt{n}.
\end{equation*}
This is a finite sum and the number of terms is bounded by $2(|\lambda| + |\mu|) - r_2$
because of Theorem \ref{thm:XLPN(n)}. So there is at least one term that
is also of order $\sqrt{n}$. 
Hence, there is a constant $c_2 > 0$ such that for every large enough $n$ there
exists a zero $z^{(\alpha)}_{k,n}$ with 
\begin{equation}\label{eq:distance}
	\im\left(\frac{1}{z_j-z^{(\alpha)}_{k,n}}\right) > c_2 \sqrt{n}.
\end{equation}
The fact that $\im(\frac{1}{z})<\frac{1}{|z|}$ implies that from \eqref{eq:distance} we find
\begin{equation*}
	|z_j-z^{(\alpha)}_{k,n}|<\frac{1}{c_2\sqrt{n}},
\end{equation*}
which ends the proof in this case.

\noindent	
{\bf Case 2:} $\im(z_j)=0$. \\
Since $z_j \not\in [0,\infty)$, by assumption, 
we then have that  $z_j$ is an element of the negative real axis. 
We can give a similar argument as in Case 1, however one has to consider the real part of \eqref{eq:logder2} and \eqref{eq:bounddistance} needs to be replaced by 
\begin{equation*}
	z_j-x^{(\alpha)}_{l,n} \geq z_j -1.
\end{equation*}

The two cases complete the proof of inequality \eqref{eq:XLPExceptionalZeros} and the theorem follows.
\end{proof}
 
\section*{Acknowledgements}
Arno Kuijlaars is supported by long term structural funding-Methusalem grant of 
the Flemish Government, by the Belgian Interuniversity Attraction Pole P07/18, 
by KU Leuven Research Grant OT/12/073, and by FWO Flanders projects G.0934.13 and 
G.0864.16.

\end{document}